\newcolumntype{x}[1]{>{\centering\arraybackslash\hspace{0pt}}p{#1}}
\newtheorem{Theorem}{Theorem}[section]
\newtheorem{Example}{Example}[section]
\newtheorem{Lemma}[Theorem]{Lemma}
\newtheorem{Corollary}[Theorem]{Corollary}
\newcommand{\zed}{{\ensuremath{\mathbb{Z}}}} 
\newcommand{\eff}{{\ensuremath{\mathbb{F}}}} 
\newcommand{\G}{\ensuremath{\mathcal{G}}}
\newcommand{\A}{\ensuremath{\mathcal{A}}}
\newcommand{\B}{\ensuremath{\mathcal{B}}}
\renewcommand*{\P}{\ensuremath{\mathcal{P}}}
\title{Nestings of BIBDs with block size four}
\author[1]{Marco Buratti}
\affil[1]{Dipartimento di Scienze di Base e Applicate per l'Ingegneria (S.B.A.I.)\\ 
Sapienza Universit\'{a} di Roma\\ Via Antonio Scarpa, 10, Italy}
\author[2]{Donald L.\ Kreher}
\affil[2]{Department of Mathematical Sciences\\
Michigan Technological University\
Houghton, MI 49931-1295, U.S.A.}
\author[3]{Douglas R.\ Stinson\thanks{D.R.\ Stinson's research is supported by  NSERC discovery grant RGPIN-03882.}}
\affil[3]{David R.\ Cheriton School of Computer Science\\University of Waterloo\\ Waterloo ON, N2L 3G1\\Canada}
\date{\today}
\begin{document}

\maketitle

\begin{abstract}
In a nesting of a balanced incomplete block design (or BIBD), we wish to add a point (the \emph{nested point}) to every block of a 
$(v,k,\lambda)$-BIBD in such a way that we end up with a partial $(v,k+1,\lambda+1)$-BIBD. 
In the case where the partial $(v,k+1,\lambda+1)$-BIBD is in fact a $(v,k+1,\lambda+1)$-BIBD, we have a 
\emph{perfect nesting}. We show that a nesting is perfect if and only if $k = 2 \lambda + 1$. 

Perfect nestings were previously known to exist in the case of Steiner triple systems (i.e., $(v,3,1)$-BIBDs) when $v \equiv 1 \bmod 6$, as well as for some symmetric BIBDs. Here we study 
nestings of $(v,4,1)$-BIBDs, which  are not perfect nestings. 
We prove that there is a nested $(v,4,1)$-BIBD if and only if $v \equiv 1 \text{ or } 4 \bmod 12$, $v \geq 13$. This is accomplished by a variety of direct and recursive constructions.
\end{abstract}

\section{Introduction and background}
\label{weak.sec}

Various kinds of nested designs have been studied for many years. One definition of these objects can be found in 
\cite[VI.36]{CD}, where it is required that a balanced incomplete block design (BIBD) with blocks of size $dk$ can be decomposed into $d$ BIBDs with blocks of size $k$. Here we consider a problem motivated by nesting of Steiner triple systems (STS), e.g., as studied in \cite{St85}. 

An STS$(v)$, say $(X, \A)$, can be \emph{nested} if there is a mapping $\phi : \A \rightarrow X$ such that 
$(X, \{ A \cup \{\phi(A)\} : A \in \A \} )$ is a $(v,4,2)$-BIBD. In words, we are adding a fourth point to every block of the STS$(v)$ so the result is a $(v,4,2)$-BIBD. It was shown in \cite{St85} that, for all $v \equiv 1 \bmod 6$, there exists a nested STS$(v)$.

One obvious generalization would be to start with an arbitrary $(v,k,\lambda)$-BIBD and again add a point to each block. 
The mapping $\phi : \A \rightarrow X$ is a \emph{nesting} provided that 
$(X, \{ A \cup \{\phi(A)\} : A \in \A \} )$ is a \emph{partial} $(v,k+1,\lambda+1)$-BIBD. 
If  the partial $(v,k+1,\lambda+1)$-BIBD is in fact a $(v,k+1,\lambda+1)$-BIBD, we have a 
\emph{perfect nesting}.
 
We define a bit of useful  terminology.   The point $\phi(A)$ is called the \emph{nested point} for the block $A$. 
We refer to $A \cup \{ \phi(A) \}$ as an \emph{augmented block}. The multiset $N_{\phi} = \{ \phi(A) : A \in \A\}$ consists of all the nested points, counting multiplicities. The (possibly partial) $(v,k+1,\lambda+1)$-BIBD obtained from a nesting is called the \emph{augmented design}. In the case where the nesting is perfect, we may also call it the \emph{augmented BIBD}.

\begin{Example}
\label{1341.exam}
The base block $B = \{1,2,4,10\}$, when developed modulo $13$, yields a $(13,4,1)$-BIBD. Thus the blocks of the BIBD are
$B, B+1, \dots , B+12$. If we nest $B$ with $0$ and we nest $B+i$ with $i$ for $1 \leq i \leq 12$, we obtain a nesting of the BIBD. 
The resulting partial $(13,5,2)$-BIBD has the following blocks, where the last point in each block is the nested point:
\[
\begin{array}{cccc}
 \{1,2,4,10,0\} \quad \quad  &
 \{2,3,5,11,1\} \quad \quad  & 
  \{3,4,6,12,2\} \quad \quad  & 
 \{4,5,7,0,3\} \\
  \{5,6,8,1,4\} \quad \quad  &
 \{ 6,7,9,2,5\} \quad \quad  & 
  \{ 7,8,10,3,6\} \quad \quad  & 
 \{8,9,11,4,7\} \\
   \{ 9,10,12,5,8\} \quad \quad  &
 \{ 10,11,0,6,9\} \quad \quad  & 
  \{11,12,1,7,10\} \quad \quad  & 
 \{ 12,0,2,8,11\} \\
   \{ 0,1,3,9,12\} \quad \quad  &
\end{array}
\]
\end{Example}

\medskip

We begin by proving a simple necessary condition for a BIBD to be nested. Here and elsewhere, the \emph{replication number} of a 
$(v,k,\lambda)$-BIBD is the integer $r = \lambda (v-1)/(k-1)$. Every point in the BIBD occurs in exactly $r$ blocks. The number of blocks in the BIBD is denoted by $b$, where $b = vr/k = \lambda v (v-1)/(k(k-1))$.

\begin{Lemma} 
\label{bound.lem} Suppose that a $(v,k,\lambda)$-BIBD has a nesting. Then $k \geq 2 \lambda + 1$.
\end{Lemma}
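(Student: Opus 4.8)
The plan is a double-counting argument carried out locally at each point of the augmented design and then summed over all points.

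\textbf{Step 1 (local count at a point).} Fix a point $p \in X$ and count the pairs $(q,B)$ where $q \neq p$ is a point, $B$ is an augmented block, and $\{p,q\} \subseteq B$. On one hand, since the augmented design is a partial $(v,k+1,\lambda+1)$-BIBD, every pair $\{p,q\}$ lies in at most $\lambda+1$ augmented blocks, so this count is at most $(\lambda+1)(v-1)$. On the other hand, counting by blocks gives $\sum_{B \ni p}(|B|-1) = k \cdot (\text{number of augmented blocks through } p)$, because every augmented block has exactly $k+1$ points. The augmented blocks through $p$ fall into two \emph{disjoint} families: the $r$ augmented blocks arising from original blocks $A$ with $p \in A$, and the $n_p$ augmented blocks of the form $A \cup \{p\}$ with $\phi(A) = p$, where $n_p$ is the multiplicity of $p$ in $N_\phi$. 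Disjointness of these families is precisely the fact that $\phi(A) \notin A$ for every block $A$ (forced, since an augmented block must have $k+1$ distinct points). Hence $k(r+n_p) \le (\lambda+1)(v-1)$ for every $p \in X$.

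\textbf{Step 2 (sum over all points).} I would sum the inequality of Step 1 over $p \in X$. Using $\sum_{p} n_p = b$ (each block contributes exactly one nested point) and $\sum_{p} r = vr = bk$, the left-hand side becomes $k(bk+b) = bk(k+1)$, while the right-hand side is $(\lambda+1)v(v-1)$. Substituting $v(v-1) = bk(k-1)/\lambda$ (from $b = \lambda v(v-1)/(k(k-1))$) and cancelling the positive factor $bk$ turns the inequality into $\lambda(k+1) \le (\lambda+1)(k-1)$. Expanding, $\lambda k + \lambda \le \lambda k - \lambda + k - 1$, i.e.\ $2\lambda \le k-1$, which is exactly $k \ge 2\lambda+1$.

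\textbf{On difficulty.} I do not expect a genuine obstacle here. The one point requiring care is the disjointness of the two families of augmented blocks through a point, which is where the (implicit) condition $\phi(A) \notin A$ is used; and one should note that the argument only invokes the \emph{inequality} ``at most $\lambda+1$'' for each pair, so it is indifferent to whether the nesting happens to be perfect.
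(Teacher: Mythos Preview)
Your proof is correct and is essentially the same double-counting argument as the paper's: after summing your local inequality $k(r+n_p)\le(\lambda+1)(v-1)$ over all $p$, you arrive at precisely the paper's global inequality $b\binom{k+1}{2}\le(\lambda+1)\binom{v}{2}$ (equivalently $bk(k+1)\le(\lambda+1)v(v-1)$), from which the conclusion follows identically. The only difference is that the paper counts pairs in the augmented blocks globally in one line, whereas you take the slight detour of first establishing the pointwise bound and then summing.
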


\begin{proof}
Suppose $(X, \A)$ is a $(v,k,\lambda)$-BIBD and $\phi$ is a nesting. $(X, \{ A \cup \{\phi(A)\} : A \in \A \} )$ is a partial $(v,k+1,\lambda+1)$-BIBD. In the augmented design, the number of pairs that occur in the $b$ blocks is
$b\binom{k+1}{2}$. Since every pair occurs at most $\lambda+1$ times, we have
\begin{equation}
\label{pairs.eq}
 b\binom{k+1}{2} \leq (\lambda+1)\binom{v}{2},
\end{equation}
or
\[ b(k^2+k) \leq (\lambda+1)(v^2-v).\]
Since $b = \lambda v (v-1)/(k(k-1))$, we have
\[ \frac{\lambda (v^2-v)(k^2+k)}{k^2-k} \leq (\lambda+1)(v^2-v).\]
Hence,
\[ \lambda (k+1) \leq (\lambda+1)(k-1).\]
This simplifies to yield the desired inequality.
\end{proof}


In the next theorem, we characterize perfect nestings in terms of the parameters of the BIBD.
\begin{Theorem}
\label{equiv.thm}
Suppose a $(v,k,\lambda)$-BIBD has a nesting. The following are equivalent:
\begin{enumerate}
\item $k = 2 \lambda + 1$, 
\item $v =  2r+1$, and
\item the nesting is perfect.
\end{enumerate}
\end{Theorem}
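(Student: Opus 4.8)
The plan is to combine elementary manipulation of the BIBD parameters with the pair‑counting argument already used in the proof of Lemma~\ref{bound.lem}. I would establish $(1)\Leftrightarrow(2)$, then $(3)\Rightarrow(1)$, and finally $(1)\Rightarrow(3)$; together these yield the stated equivalence.

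For $(1)\Leftrightarrow(2)$, no nesting is needed, only the relation $r=\lambda(v-1)/(k-1)$: since $v>1$, the equation $v=2r+1$ says $v-1 = 2\lambda(v-1)/(k-1)$, which after cancelling the factor $v-1$ is exactly $k-1 = 2\lambda$. For $(3)\Rightarrow(1)$: if the nesting is perfect then each of the $\binom{v}{2}$ pairs is covered exactly $\lambda+1$ times among the $b$ augmented blocks, so \eqref{pairs.eq} holds with equality; carrying the equality through the same chain of simplifications used in the proof of Lemma~\ref{bound.lem} gives $\lambda(k+1) = (\lambda+1)(k-1)$, i.e.\ $k = 2\lambda+1$.

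The substantive step is $(1)\Rightarrow(3)$. Assuming $k = 2\lambda+1$, so that $k+1 = 2(\lambda+1)$ and $k-1 = 2\lambda$, I would first observe that the augmented design has the same number of blocks $b$ as the original BIBD, and that
\[
b \;=\; \frac{\lambda v(v-1)}{k(k-1)} \;=\; \frac{v(v-1)}{2(2\lambda+1)} \;=\; \frac{(\lambda+1)\,v(v-1)}{(k+1)k},
\]
which is precisely the number of blocks required of a $(v,k+1,\lambda+1)$-BIBD. Consequently the augmented blocks cover pairs a total of $b\binom{k+1}{2} = (\lambda+1)\binom{v}{2}$ times. But the augmented design is a partial $(v,k+1,\lambda+1)$-BIBD, so every pair is covered \emph{at most} $\lambda+1$ times; since the total equals $(\lambda+1)\binom{v}{2}$ and there are exactly $\binom{v}{2}$ pairs, every pair must be covered exactly $\lambda+1$ times. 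Hence the augmented design is a genuine $(v,k+1,\lambda+1)$-BIBD and the nesting is perfect.

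I do not anticipate a real obstacle here. The one point worth stating carefully is that when $k = 2\lambda+1$ the block count of the original BIBD already coincides with that of a $(v,k+1,\lambda+1)$-BIBD, after which the conclusion is a one‑line averaging argument; note also that $b$ counts blocks with multiplicity, so no separate discussion of possibly coinciding augmented blocks is needed.
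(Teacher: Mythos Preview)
Your proof is correct and follows essentially the same approach as the paper: the equivalence of (1) and (2) via $\lambda(v-1)=r(k-1)$, and the equivalence of (1) and (3) by observing that $k=2\lambda+1$ is exactly the equality case of \eqref{pairs.eq}, whence an averaging argument forces every pair to occur $\lambda+1$ times. The paper states this more tersely, but the underlying argument is the same.
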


\begin{proof}
Conditions  1.\ and 2. are equivalent because $\lambda(v-1) = r(k-1)$ in a BIBD.
The equivalence of 1.\ and 3.\ follows easily from the proof of Lemma \ref{bound.lem}. Since we have equality in (\ref{pairs.eq}), it must be the case that every pair of points occurs exactly $\lambda +1$ times in the augmented design. Hence the nesting is perfect.
\end{proof}

\begin{Lemma} 
\label{2k+1.lem}
If a $(v,k,\lambda)$-BIBD has a perfect nesting, then $v \equiv 1 \bmod 2k$.
\end{Lemma}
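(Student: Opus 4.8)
The plan is to deduce the congruence from the numerical parameters alone, using Theorem~\ref{equiv.thm} to fix the shape of the parameters and then exploiting the elementary integrality conditions that the augmented BIBD must satisfy. Since the nesting is perfect, Theorem~\ref{equiv.thm} tells us that $k = 2\lambda+1$, so $k$ is odd and $\lambda+1 = (k+1)/2$ is a positive integer; it also tells us $v = 2r+1$, so $v$ is odd and $v-1$ is even. Because $k$ is odd, $\gcd(k,2)=1$, and hence $2k \mid v-1$ is equivalent to $k \mid v-1$. So I would reduce the whole statement to proving the single divisibility $k \mid v-1$.

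To get that, I would use the fact that the augmented design is, by hypothesis, an honest $(v,k+1,\lambda+1)$-BIBD. Every point of it therefore lies in exactly $r' = (\lambda+1)(v-1)/k$ blocks, and $r'$ is an integer, so $k \mid (\lambda+1)(v-1)$, that is, $k \mid \tfrac{k+1}{2}(v-1)$. The one non-automatic point is the coprimality claim $\gcd\bigl(k,\tfrac{k+1}{2}\bigr)=1$: since $2\cdot \tfrac{k+1}{2} = k+1 \equiv 1 \pmod{k}$, the integer $\tfrac{k+1}{2}$ is the inverse of $2$ modulo $k$ (which exists because $k$ is odd), so it is a unit modulo $k$ and the gcd is $1$. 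Cancelling this coprime factor from $k \mid \tfrac{k+1}{2}(v-1)$ yields $k \mid v-1$, and combined with the reduction in the previous paragraph this gives $2k \mid v-1$, i.e.\ $v \equiv 1 \pmod{2k}$.

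I do not expect any real obstacle here: the argument is just the necessary divisibility condition for the augmented BIBD together with one coprimality observation, and that observation is itself an immediate consequence of $k=2\lambda+1$ forcing $k$ to be odd. If one prefers, Step with the gcd can be replaced by the remark that $\gcd(k,k+1)=1$ while the separated factor $2$ is coprime to the odd number $k$, which says the same thing. It is also worth noting, as a consistency check, that under $k=2\lambda+1$ one has $r = (v-1)/2$, so the conclusion $v \equiv 1 \pmod{2k}$ is precisely the statement $k \mid r$.
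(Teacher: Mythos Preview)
Your proof is correct. Both your argument and the paper's rest on the integrality of the replication number $r' = (\lambda+1)(v-1)/k$ of the augmented BIBD, together with $k = 2\lambda+1$ from Theorem~\ref{equiv.thm}. The paper takes a slightly more direct route: it observes that every point must occur as a nested point exactly $r' - r$ times (a nonnegative integer by construction), and then computes
\[
r' - r = \frac{(\lambda+1)(v-1)}{k} - \frac{\lambda(v-1)}{k-1} = \frac{v-1}{2k}
\]
in one shot, so $2k \mid v-1$ immediately. You instead split the conclusion into $2 \mid v-1$ (from $v = 2r+1$) and $k \mid v-1$ (from integrality of $r'$ plus the coprimality $\gcd(k,\tfrac{k+1}{2})=1$), and then recombine. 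The paper's version has the minor advantage of a combinatorial interpretation of the quantity $(v-1)/(2k)$ as the number of times each point is used as a nested point; your version has the advantage of not needing to manipulate the difference $r'-r$ algebraically. Either way the content is the same.
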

\begin{proof}
Every point in the augmented BIBD occurs in $r'$ blocks, where $r' = (\lambda+1)(v-1)/k$.
Also, every point occurs in $r$ blocks in the original $(v,k,\lambda)$-BIBD.
Hence, every point occurs in $N_{\phi}$ (the multiset of nested points)  the same number of times, namely $r' - r$ times. 
Using the fact that $k = 2\lambda + 1$, we have
\begin{align*}
 r' - r &= \frac{(\lambda+1)(v-1)}{k}  - \frac{\lambda (v-1)}{k-1}\\
 &= \frac{(k+1)(v-1)}{2k}  - \frac{(k-1) (v-1)}{2(k-1)}\\
 &= \frac{(k+1)(v-1)}{2k}  - \frac{k(v-1)}{2k}\\
 &= \frac{v-1}{2k}.
\end{align*}
Since $r' - r$ is an integer, we have  $v \equiv 1 \bmod 2k$.
\end{proof}


Symmetric $(v,k,\lambda)$-BIBDs having a perfect nesting have been characterized by \'{O} Cathain in \cite{POC}. 
First, we use Theorem \ref{equiv.thm} to determine the possible parameters of a $(v,k,\lambda)$-SBIBD 
that could be perfectly nested. We have $k = r = (v-1)/2$ and $\lambda = (k-1)/2$, so the parameters are those of a $(4t-1,2t-1,t-1)$-SBIBD, as was shown in \cite{POC}. The following characterization is also shown in \cite{POC}.

\begin{Theorem}[\'{O} Cathain \cite{POC}]
\label{SBIBD.thm}
There exists a $(4t-1,2t-1,t-1)$-SBIBD that can be perfectly nested 
if and only if a skew-Hadamard matrix of order $4t$ exists.
\end{Theorem}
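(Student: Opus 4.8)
The plan is to translate the statement into linear algebra over $(0,1)$-matrices and then $\pm 1$ matrices, where the defining skew-symmetry of a skew-Hadamard matrix appears on its own. Write $J$ for the all-ones matrix and $\mathbf{1}$ for the all-ones vector of the appropriate size, and recall that the incidence matrix $N$ of a $(4t-1,2t-1,t-1)$-SBIBD satisfies $NN^T = tI+(t-1)J$ and $N\mathbf{1} = N^T\mathbf{1} = (2t-1)\mathbf{1}$, while the standard characterisation of symmetric designs says conversely that any square $(0,1)$-matrix with row sums $2t-1$ and $NN^T = tI+(t-1)J$ is the incidence matrix of such a design.

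The first step is to describe a perfect nesting concretely. By Lemma~\ref{2k+1.lem} (applicable since $k=2t-1=2(t-1)+1$) each point occurs as a nested point exactly $(v-1)/(2k)=1$ time, so the nesting is encoded by a permutation matrix $P$ (whose $1$ in column $A$ lies in row $\phi(A)$), and the perfectly augmented design — a $(4t-1,2t,t)$-BIBD on $v=4t-1$ points with $b=v$ blocks, hence symmetric — has incidence matrix $N'=N+P$. Comparing $N'(N')^T = t(I+J)$ with $(N+P)(N+P)^T = NN^T+NP^T+PN^T+I$ gives the single identity
\[
NP^T + PN^T = J - I .
\]
I would then check the reverse: if $N$ is the incidence matrix of a $(4t-1,2t-1,t-1)$-SBIBD and $P$ is a permutation matrix with $NP^T+PN^T=J-I$, then $(N+P)(N+P)^T = t(I+J)$; since the row sums of $N+P$ are $2t$, the diagonal entries $2t$ force $N+P$ to be a $(0,1)$-matrix, whence it is the incidence matrix of a symmetric $(4t-1,2t,t)$-design and $P$ records a perfect nesting of $N$ (the requirement $\phi(A)\notin A$ being exactly that $N+P$ is $(0,1)$). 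So the left side of the theorem is equivalent to the existence of such a pair $(N,P)$.

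The second step passes to $\pm 1$ matrices. With $M=2N-J$ one computes $MM^T = 4tI-J$ and $M\mathbf{1}=-\mathbf{1}$, and the identity above becomes $MP^T+PM^T=-2I$. Setting $C := J - 2NP^T = -MP^T$, short computations give
\[
C + C^T = 2I, \qquad CC^T = 4tI - J, \qquad C\mathbf{1} = \mathbf{1},
\]
and conversely any $\pm 1$ matrix $C$ of order $4t-1$ with these three properties yields, for an arbitrary permutation matrix $P$, a matrix $N := \tfrac12(J-CP)$ that is $(0,1)$, has row sums $2t-1$, satisfies $NN^T = tI+(t-1)J$, and makes $NP^T+PN^T = J-I$. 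Finally, bordering,
\[
H = \begin{pmatrix} 1 & \mathbf{1}^T \\ -\mathbf{1} & C \end{pmatrix}
\]
turns such a $C$ into a skew-Hadamard matrix of order $4t$; and conversely every skew-Hadamard matrix of order $4t$ can be brought, by simultaneously negating matching rows and columns (which preserves the skew-Hadamard property), to this bordered form, so its core $C$ has exactly the three properties above. Chaining these equivalences proves the theorem.

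The main work is bookkeeping rather than ideas: one must make sure the linear-algebraic hypotheses capture the combinatorial side-conditions exactly — that $N+P$ is forced to be $(0,1)$ in the forward direction, that the recovered $N$ is genuinely an SBIBD incidence matrix in the reverse direction, and that ``skew-Hadamard matrix'' may be normalised to the bordered shape without loss of generality. The identities $C+C^T=2I$ and $CC^T=4tI-J$ are themselves one-line calculations with $J$ and $\mathbf{1}$ once $M$ and $P$ are in hand.
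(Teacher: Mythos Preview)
The paper does not actually prove this theorem; it simply attributes the result to \'{O} Cath\'{a}in \cite{POC} and states it without argument. So there is no ``paper's own proof'' to compare against.

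That said, your proposal is correct and is essentially the standard proof of this result (and, in outline, the argument in \cite{POC}). The key observations---that Lemma~\ref{2k+1.lem} forces the nesting to be encoded by a permutation matrix $P$, that the perfect-nesting condition is exactly $NP^T+PN^T=J-I$, and that the substitution $C=-MP^T$ with $M=2N-J$ converts this into the three core identities $C+C^T=2I$, $CC^T=4tI-J$, $C\mathbf{1}=\mathbf{1}$ characterising the core of a normalised skew-Hadamard matrix---are all right, and your bookkeeping checks (that $N+P$ is forced to be $(0,1)$ via the diagonal of $(N+P)(N+P)^T$, that the recovered $N$ really is an SBIBD incidence matrix, and that any skew-Hadamard matrix can be normalised to bordered form by simultaneous row/column negations) close the loop in both directions. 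The one small thing worth making explicit is that the freedom in choosing $P$ in the reverse direction corresponds to relabelling the blocks of the recovered SBIBD, which is why an arbitrary choice suffices.
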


At the present time, skew-Hadamard matrices of order $4t$ are known to exist for all positive integers $t < 89$.

\medskip


The case $k=3$, $\lambda = 1$ is the Steiner triple system case. It was shown in \cite{St85} that there is a perfect  nesting of an STS$(v)$ (i.e., a nesting into a $(v,4,2)$-BIBD) for all $v \equiv 1 \bmod{6}$. These are the same as Banff designs (see \cite{BMNR}) with $k=3$ that have exact colourings. 

Nestings have also been considered  from another point of view.
Harmonious colourings of Levi graphs have been studied in a few recent papers \cite{AGMMRRT,PBOM,BMNR}. Connections with nestings were first observed in \cite{BMNR}. We briefly summarize these connections now.
First, we  recall several definitions from \cite{AGMMRRT,PBOM,BMNR}. 
A \emph{harmonious colouring} of a graph $G$ is a proper vertex colouring of $G$ in which no two edges receive the same pair of colours. 
The \emph{harmonious chromatic number} of a graph $G$, denoted by $h(G)$, is the minimum integer $k$ such that $G$ has a harmonious colouring using $k$ colours. 

Let $(X, \A)$ be a $(v,k,\lambda)$-BIBD.  The \emph{Levi graph} of $(X, \A)$, denoted $\mathcal{L}(X, \A)$,  is the bipartite point-vs-block incidence graph of $(X, \A)$. Thus $\mathcal{L}(X, \A)$ has vertices $X \cup \A$, and $xA$ is an edge if and only if $x \in A$. It is easy to see that $h(\mathcal{L}(X, \A)) \geq v$. A  \emph{Banff design}, as defined in \cite{BMNR},  is a $(v,k,\lambda)$-BIBD such that $h(\mathcal{L}(X, \A)) = v$. A Banff design has an \emph{exact colouring} if every pair of colours occurs in exactly one edge.

The following lemma is remarked in \cite{BMNR}. 

\begin{Lemma}
A $(v,k,\lambda)$-BIBD having a perfect nesting is equivalent to a $(v,k,\lambda)$-Banff design having an exact colouring.
\end{Lemma}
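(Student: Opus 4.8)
The plan is to prove the equivalence by unwinding both sides to a common combinatorial condition on the map $\phi$, namely that the augmented blocks together exhaust all pairs of points exactly $\lambda+1$ times. In one direction, suppose $(X,\A)$ has a perfect nesting $\phi$. We already know from the proof of Lemma~\ref{bound.lem} (and Theorem~\ref{equiv.thm}) that every pair of points occurs exactly $\lambda+1$ times in the augmented design. I would take the colouring of $\LLL(X,\A)$ that assigns to each point $x \in X$ the colour $x$ itself (i.e.\ $v$ colours, indexed by the points), and to each block vertex $A$ the colour $\phi(A)$. This is a proper colouring precisely because $\phi(A) \notin A$ (which is part of the definition of a nesting: the augmented block $A \cup \{\phi(A)\}$ has size $k+1$), so adjacent point and block vertices get distinct colours. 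The edge $xA$ then receives the unordered pair of colours $\{x, \phi(A)\}$; as $A$ ranges over $\A$ and $x$ over the points of $A$, these pairs are exactly the pairs contained in the augmented blocks, which by perfectness cover every $2$-subset of $X$ exactly $\lambda+1$ times. Wait — that would make it an exact colouring only if $\lambda+1 = 1$, so I must be more careful: the claim is not that the colouring is exact on the nose, but rather that one reads off the structure. Let me restate: the right object is a Banff design having an exact colouring, so this argument should be run for the pair $(h=v,\text{exact})$, and the multiplicity $\lambda+1$ is absorbed because distinct edges of the Levi graph can repeat a colour pair only when... hmm.

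Actually the cleanest route is to observe that in $\LLL(X,\A)$ two edges $xA$ and $x'A'$ receive the same colour pair iff $\{x,\phi(A)\} = \{x',\phi(A')\}$, and since a point colour and a block colour are never forced to coincide with the "wrong type," one checks that an exact colouring forces each unordered pair of colours to arise from exactly one incidence; translating back, this says each pair $\{x,y\}$ of points lies in exactly one augmented block in the relevant sense. I would therefore set up the correspondence carefully: a harmonious colouring of $\LLL(X,\A)$ with exactly $v$ colours must (up to relabelling) use the point set $X$ as its colour set and colour each $x$ by $x$ (this is the standard fact underlying $h(\LLL) \ge v$, which the excerpt already invokes), so the colouring is entirely determined by a function $\psi : \A \to X$ on the block vertices; properness of the colouring is equivalent to $\psi(A) \notin A$ for all $A$; the no-two-edges-same-pair (harmonious) condition is equivalent to the augmented blocks $A \cup \{\psi(A)\}$ forming a \emph{partial} $(v,k+1,\lambda+1)$-configuration in which no pair repeats beyond what the original $\lambda$ plus the new nested incidences allow — i.e.\ a partial $(v,k+1,?)$ structure; and finally the exactness condition (every pair of colours on some edge) is exactly the condition that every pair of points appears, which upgrades "partial" to a genuine $(v,k+1,\lambda+1)$-BIBD, i.e.\ a perfect nesting.

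The key steps, in order: (i) recall/cite the argument that any $v$-colouring of $\LLL(X,\A)$ is equivalent to a point-labelling plus a block-labelling $\psi:\A\to X$; (ii) show properness $\Leftrightarrow$ $\psi(A)\notin A$ $\Leftrightarrow$ $|A\cup\{\psi(A)\}| = k+1$; (iii) show the harmonious (no repeated colour-pair on two edges) condition is equivalent to: every pair of points is covered at most $\lambda+1$ times by the augmented blocks, which is exactly "$(X,\{A\cup\{\psi(A)\}\})$ is a partial $(v,k+1,\lambda+1)$-BIBD," i.e.\ $\psi$ is a nesting; (iv) observe that under (iii) the design is Banff (it has a $v$-colouring) iff a nesting exists, and the colouring is \emph{exact} iff every colour pair appears, iff every pair of points is covered exactly $\lambda+1$ times, iff the partial BIBD is a genuine BIBD, iff the nesting is perfect; (v) conclude the two-way equivalence, noting both directions are just reading (iii)--(iv) forwards and backwards.

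I expect step (iii) to be the main obstacle — or rather, the main place where one has to be scrupulous about bookkeeping. The subtlety is that a colour pair $\{x,y\}$ can be realised by an edge $xA$ (with $\psi(A)=y$), by an edge $yA'$ (with $\psi(A')=x$), \emph{or} simply by both $x,y$ lying in a common block $A''$ as original points, since the edges $xA''$ and $yA''$ carry colours $\{x,\psi(A'')\}$ and $\{y,\psi(A'')\}$, not $\{x,y\}$ — so in fact the pair $\{x,y\}$ as a colour-pair on an edge $zA$ requires one of $z,\psi(A)$ to be $x$ and the other $y$. One must carefully count: the number of edges of $\LLL$ coloured $\{x,y\}$ equals the number of blocks containing $x$ with $\psi$-value $y$, plus the number of blocks containing $y$ with $\psi$-value $x$; and this equals the number of augmented blocks containing both $x$ and $y$ \emph{via at least one new incidence}. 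Then the harmonious condition "$\le 1$ such edge" has to be reconciled with "$\le \lambda+1$ occurrences of the pair in augmented blocks," using that the pair already occurs exactly $\lambda$ times among the \emph{original} blocks. I would lay this correspondence out explicitly as a short lemma or inline computation, since it is precisely the content connecting harmonious colourings of Levi graphs to nestings, and is the heart of why the equivalence holds; the remaining implications are then immediate.
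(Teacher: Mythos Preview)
The paper does not actually prove this lemma; it merely attributes it to \cite{BMNR} (``The following lemma is remarked in \cite{BMNR}''). So there is no paper proof to compare against.

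Your argument, once it settles down after the two false starts, is correct. The essential bijection is exactly as you describe: in any harmonious $v$-colouring of $\LLL(X,\A)$ the point vertices must receive all $v$ colours (any two points lie together in some block $A$, so same-coloured points would force edges $xA$, $yA$ to share a colour pair), and after relabelling the colouring is determined by a map $\psi:\A\to X$. Properness is exactly $\psi(A)\notin A$. The edge-count you give in the final paragraph is the crux and is right: the number of edges carrying the colour pair $\{x,y\}$ equals
\[
|\{A : x\in A,\ \psi(A)=y\}| + |\{A : y\in A,\ \psi(A)=x\}|,
\]
which is precisely the number of augmented blocks containing $\{x,y\}$ \emph{via a new incidence}; since the pair already occurs exactly $\lambda$ times among the original blocks, ``harmonious'' ($\le 1$ edge per colour pair) is equivalent to ``$\le\lambda+1$ occurrences in augmented blocks'' (i.e.\ $\psi$ is a nesting), and ``exact'' ($=1$ edge per colour pair) is equivalent to ``$=\lambda+1$ occurrences'' (i.e.\ the nesting is perfect).

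One small suggestion: your write-up would benefit from excising the exploratory backtracking and simply presenting the clean correspondence in your final two paragraphs. You might also note as a sanity check that exactness forces $bk=\binom{v}{2}$, which rearranges to $k=2\lambda+1$, consistent with Theorem~\ref{equiv.thm}.
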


The main problem studied in the two papers \cite{AGMMRRT,PBOM} can be viewed as a generalized version of nesting $(v,2,1)$-BIBDs (i.e., complete graphs $K_v$).  Lemma \ref{bound.lem} asserts that a $(v,2,1)$-BIBD cannot be nested.  
The problem addressed in \cite{AGMMRRT,PBOM} allows the number of points to be increased when the design is nested. Nestings are found in these papers that minimize the number of ``new'' points used.

\subsection{Our contributions}

In this paper, we completely determine the spectrum of nested $(v,4,1)$-BIBDs. We prove that there is a nested $(v,4,1)$-BIBD if and only if $v \equiv 1 \text{ or } 4 \bmod 12$, $v \geq 13$.
Of course, nestings of $(v,4,1)$-BIBDs are not perfect nestings because 
$2\lambda + 1 = 3 < 4 = k$.  

First, in Section \ref{nestGDD.sec} we introduce the related concept of \emph{nested group-divisible designs}. Nestings of $(v,4,1)$-BIBDs with 
$v \equiv 1 \bmod 12$ are studied in Section \ref{1mod12.sec}, and those with $v \equiv 4 \bmod 12$ are studied in Section \ref{4mod12.sec}.
Finally, Section \ref{summary.sec} is a brief summary, where we also mention possible future work.

To prove our results, we use a variety of direct and recursive constructions in the rest of the paper. Direct constructions include difference techniques (often based on finite fields) and computer constructions involving backtracking algorithms. Recursive constructions employ pairwise balanced designs, group-divisible designs and difference matrices in various ways.

\section{Nested group-divisible designs}
\label{nestGDD.sec}

The proof of the existence of nested STS$(v)$ given in  \cite{St85} made essential use of \emph{nested group-divisible designs}. This is also a useful approach in studying the existence of other classes of nested $(v,k,\lambda)$-BIBDs, and these designs are also of interest in their own right. We review some standard definitions first.  

A \emph{$(k,\lambda)$-group-divisible design} (or \emph{GDD}) of type $t^u$ is a triple $(X,\G,\A)$ that satisfies the following properties:
\begin{enumerate}
\item $X$ is a set of $tu$ \emph{points}.
\item $\G$ is a partition of $X$ into $u$ \emph{groups} of size $t$, say $\G = \{G_1, \dots , G_u\}$.
\item $\A$ consists of a multiset of \emph{blocks} of size $k$ such that 
\begin{enumerate}
\item $|G_i \cap A| \leq 1$ for $1 \leq i \leq u$ and for all $A \in \A$,
\item every pair of points from different groups is contained in exactly $\lambda$ blocks.
\end{enumerate}
\end{enumerate}
A \emph{partial GDD} is the same as a GDD, except that every pair of points from distinct groups is contained in
\emph{at most} $\lambda$ blocks (pairs of points from the same group never occur in the same block).

We want to study nestings of GDDs. 
A $(k,\lambda)$-GDD of type $t^u$ can be \emph{nested} if there is a mapping 
$\phi : \A \rightarrow X$ such that
$(X,\G,\B)$  is a partial $(k+1,\lambda + 1)$-GDD of type $t^u$, where
\[ \B = \{ A \cup \{\phi(A)\} : A \in \A \} .\]
If the augmented partial GDD is actually a GDD (i.e., a $(k+1,\lambda + 1)$-GDD of type $t^u$), then we say that the nesting of the GDD is 
\emph{perfect}. 

We will prove next that $k = 2\lambda +1$ is a necessary condition for such a perfect nesting of a GDD to exist (this is analogous to the case of perfect nestings of BIBDs).

It is easy to see that every point in a $(k,\lambda)$-GDD of type $t^u$ occurs in exactly $r$ blocks, where
\[ r = \frac{\lambda t(u-1)}{k-1}.\]
Also, the number of blocks is $b$,
where
\[ b 
=  \frac{\lambda t^2 u(u-1)}{k(k-1)}.\]
Of course $r$ and $b$ must be integers, so 
\begin{equation}
\label{cong1.eq} \lambda t(u-1) \equiv 0 \bmod (k-1)
\end{equation}
and \begin{equation}
\label{cong2.eq} \lambda t^2 u(u-1) \equiv 0 \bmod (k(k-1)).\end{equation}
A perfect nesting of a 
$(k,\lambda)$-GDD of type $t^u$ yields a
$(k+1,\lambda + 1)$-GDD of type $t^u$.
Hence, the following additional necessary conditions for a perfect nesting (analogous to (\ref{cong1.eq}) and (\ref{cong2.eq})) must be satisfied:
\begin{equation}
\label{cong3.eq} (\lambda +1) t(u-1) \equiv 0 \bmod k
\end{equation}
and \begin{equation}
\label{cong4.eq} (\lambda +1) t^2 u(u-1) \equiv 0 \bmod ((k+1)k).\end{equation}

The number of blocks is not changed as a result of the nesting, so it must be the case that
\[ \frac{\lambda t^2 u(u-1)}{k(k-1)} = \frac{(\lambda+1) t^2 u(u-1)}{k(k+1)}
.\]
Simplifying, we have
\[ k = 2 \lambda+1
.\]

Finally, in a perfect nesting,
every point occurs in 
\[ r' = \frac{(\lambda+1) t(u-1)}{k} \] blocks in the $(k+1,\lambda + 1)$-GDD of type $t^u$.
So every point occurs as a nested point $r' - r$ times.
Using the fact that $k = 2\lambda + 1$, we have, by the same kinds of calculations that were used in the proof of Lemma \ref{2k+1.lem},
\begin{align*}
 r' - r &= \frac{(\lambda+1)t(u-1)}{k}  - \frac{\lambda t(u-1)}{k-1}\\
 &= \frac{t(u-1)}{2k}.
\end{align*}
Since $r'-r$ is an integer, we have 
\begin{equation}
\label{cong5.eq} t(u-1) \equiv 0 \bmod (2k).
\end{equation}


It is also obvious that $u \geq k+1$, so we have proven the following necessary conditions for existence of a perfectly nested $(k,\lambda)$-GDD of type $t^u$.

    \begin{Theorem} There exists a perfectly nested $(k,\lambda)$-GDD of type $t^u$ only if $k = 2 \lambda +1$, conditions \textup{(\ref{cong1.eq})} and  \textup{(\ref{cong3.eq})}--\textup{(\ref{cong5.eq})} are satisifed, and $u \geq k+1$.
\end{Theorem}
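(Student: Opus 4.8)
The plan is to transport to the GDD setting the three counting arguments already used for Lemma~\ref{bound.lem}, Theorem~\ref{equiv.thm}, and Lemma~\ref{2k+1.lem}: every one of the listed conclusions is either an integrality (divisibility) constraint on a replication/block count, the identity $k=2\lambda+1$ coming from comparing block counts, or an obvious size bound, so no new idea is required beyond careful bookkeeping.

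First I would record the elementary parameters of a $(k,\lambda)$-GDD of type $t^u$, namely that each point lies in $r=\lambda t(u-1)/(k-1)$ blocks and that there are $b=\lambda t^2u(u-1)/(k(k-1))$ blocks in all; since $r$ and $b$ are integers, this immediately gives conditions (\ref{cong1.eq}) and (\ref{cong2.eq}). If the GDD admits a perfect nesting, then by definition the augmented design $(X,\G,\B)$ is a genuine $(k+1,\lambda+1)$-GDD of type $t^u$, so applying these same two observations to it yields the analogous integrality conditions (\ref{cong3.eq}) and (\ref{cong4.eq}).

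Next I would use the key structural fact that nesting neither creates nor destroys blocks, i.e.\ $|\B|=|\A|$. Writing the block-count formula for the original $(k,\lambda)$-GDD and for the augmented $(k+1,\lambda+1)$-GDD and equating them forces $\lambda/(k-1)=(\lambda+1)/(k+1)$, which simplifies to $k=2\lambda+1$. This is the only step that is not pure bookkeeping, and even it is a one-line manipulation; I do not anticipate a genuine obstacle anywhere in the argument.

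Finally, a fixed point lies in $r$ blocks of the original GDD and in $r'=(\lambda+1)t(u-1)/k$ blocks of the augmented GDD, hence it occurs exactly $r'-r$ times in the multiset of nested points. Substituting $k=2\lambda+1$ and simplifying exactly as in the proof of Lemma~\ref{2k+1.lem} gives $r'-r=t(u-1)/(2k)$, and integrality of $r'-r$ yields (\ref{cong5.eq}). The bound $u\geq k+1$ is immediate, since the augmented blocks have size $k+1$ and meet each group in at most one point, so there must be at least $k+1$ groups. Collecting these observations proves the theorem.
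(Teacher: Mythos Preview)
Your proposal is correct and follows essentially the same route as the paper: the paper, too, records the replication and block-count formulas for the GDD and its augmentation to obtain (\ref{cong1.eq})--(\ref{cong4.eq}), equates the two block counts to force $k=2\lambda+1$, computes $r'-r=t(u-1)/(2k)$ to obtain (\ref{cong5.eq}), and notes that $u\geq k+1$ is obvious. There is nothing to add.
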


Necessary conditions for existence of $(4,1)$-GDDs of type $t^u$ 
 are obtained from (\ref{cong1.eq}) and (\ref{cong2.eq}):
\begin{equation}
\label{cong4,1.eq} t(u-1) \equiv 0 \bmod 3 \quad \text{and} \quad t^2u(u-1) \equiv 0 \bmod 12.\end{equation}
We  will consider nestings of $(4,1)$-GDDs of type $t^u$ where $t=3$. 
When $t=3$, the two congruences (\ref{cong4,1.eq}) are equivalent to $u \equiv 0,1 \bmod 4$. 

In order to construct a nested GDD, we could try to find an appropriate set of base blocks 
in a group $G$ of order $tu$. 
In general, we would seek base blocks for a nested GDD having a specified block size $k$ and index $\lambda$. 

Let $G$ be an additive 
group, let $H$ be a  subgroup of $G$, and let $k\geq 2$ and $\lambda \geq 1$ be integers.
Here $k$ is the block size and $\lambda$ is the index.
A \emph{relative difference family} in $G \setminus H$ (or \emph{RDF}, for short), denoted by
$(G,H,k,\lambda)$-RDF, is a set of $k$-subsets of $G$ (called \emph{base blocks}) 
that satisfy the following property:
\begin{enumerate}
\item[(1)] The base blocks contain every difference in $G \setminus H$ exactly $\lambda$ times, and they contain no difference in $H$. \end{enumerate}

A \emph{Banff relative difference family} in $G \setminus H$ (or \emph{BRDF}, for short), denoted by
$(G,H,k,\lambda)$-BRDF, is a $(G,H,k,\lambda)$-RDF that satisfies 
the following additional two properties:
\begin{enumerate}
\item[(2)] The base blocks are all disjoint from $H$.
\item[(3)] The base blocks and their negatives are pairwise disjoint. 
\end{enumerate}
We will also use the notation $(g,h,k,\lambda)$-BRDF to denote any $(G,H,k,\lambda)$-BRDF in which $|G| = g$ and $|H| = h$. 

Property (1) ensures we get a GDD having index $\lambda$ by developing the base blocks through $G$; the groups of the GDD are $H$ and its cosets. 
Properties (2) and (3) ensure that the GDD can be nested. We simply nest each base block with $0$ and then develop all the base blocks through the group $G$ in order to obtain the augmented GDD. Note that the groups are not nested.

BRDF are obvious generalizations of the Banff difference families that are defined in 
\cite[Defn.\ 3.4]{BMNR}. 
In fact, a Banff difference family in $G$ (which generates a nested BIBD) is the same thing as a BRDF in $G \setminus \{0\}$.

\begin{Example}
{\rm The base block $\{1,2,4,8\}$ is a $(\zed_{15}, \{0,5,10\}, 4,1)$-BRDF. We verify the required three properties:
\begin{enumerate}
\item[(1)] The differences obtained from the base block are $\pm 1, \pm 2, \pm 3, \pm 4, \pm 6, \pm 7$.
\item[(2)] The base block is disjoint from $\{0,5,10\}$.
\item[(3)] The base block and its negative, namely $\{7,11,13,14\}$, are  disjoint. 
\end{enumerate}
Thus the development of the base block yields a $(4,1)$-GDD of type $3^5$. The groups of the GDD are 
$\{0,5,10\}$, $\{1,6,11\}$, $\{2,7,12\}$, $\{3,8,13\}$, $\{4,9,14\}$. If we nest the base block with $0$, then the development of the
 resulting augmented base block of size five is a partial $(5,2)$-GDD of type $3^5$.} 
\end{Example}

\medskip

Property (3) of a BRDF implies that there are no elements of order two in $G \setminus H$, i.e., all the elements of order $2$ must be in the subgroup $H$. This may limit the groups that can possibly give rise to an appropriate set of base blocks. 

A $(G,H,k,\lambda)$-BRDF yields a perfect nesting of the associated GDD when $k = 2\lambda + 1$. In this case, properties (2) and (3) can be replaced by the following property:

\begin{enumerate}
\item[(2$\,'$)] The base blocks and their negatives constitute a partition  of $G \setminus H$.
\end{enumerate}

We will use $(G,H,4,1)$-BRDFs to generate nested $(v,4,1)$-BIBDs and nested $(4,1)$-GDDs of type $3^u$. This is considered further in the next section.



\section{Nesting $(v,4,1)$-BIBDs with $v \equiv 1 \bmod 12$}
\label{1mod12.sec}

The well-known necessary (and sufficient) condition for existence of a $(v,4,1)$-BIBD is $v \equiv 1,4 \bmod 12$.
In this section, we study nested $(v,4,1)$-BIBDs with $v \equiv 1 \bmod 12$.
We also consider GDDs with block size four and $\lambda =1$. First, we establish some useful results on BRDFs.


Buratti {\it et al.}\ \cite{BMNR} have reported the existence of an $(\eff_q,\{0\},4,1)$-BRDF for all primes and prime powers $q \equiv 1 \bmod 12$. They also present a $(\zed_{85},\{0\},4,1)$-BRDF. We list base blocks for $(\zed_v,\{0\},4,1)$-BRDF for
$v = 13,25,37,49$ and $61$ in  Table \ref{BIBDk=4.tab}, which were all found by computer.

\begin{table}
\caption{BRDFs for nested $(v,4,1)$-BIBDs}
\label{BIBDk=4.tab}
\[
\begin{array}{c|c|c}
v & \text{group} & \text{base blocks} \\ \hline
13	& \zed_{13} & \{1,2,4,10\}\\ \hline
25	& \zed_{5} \times \zed_{5} 
	& \{(3,2), (3,3), (4,2), (2,0)\}, \{(1,4), (1,1), (3,4), (4,0)\} \\ \hline
37	& \zed_{37} & \{1,2,4,25\}, \{3,10,22,30\}, \{5,9,14,20\} \\ \hline
49	& \zed_{49} & \{1,2,4,9\},\{3,7,21,32\},\{5,14,24,37\},\{6,18,33,39\} \\ \hline
61	& \zed_{61} & \{1,2,4,8\},\{3,11,16,37\},\{5,15,29,44\},\{6,23,42,51\},\{7,18,30,48\}
\end{array}
\]
\end{table}

Here is a general construction for $(3v,3,4,1)$-BRDF for certain values of $v$.


\begin{Theorem}
\label{3BRDF.thm}
If $v$ is an integer whose maximal prime power factors are all congruent to $1$ modulo $4$,
then there exists a $(3v,3,4,1)$-BRDF.
\end{Theorem}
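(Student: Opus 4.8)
The plan is to build the required $(3v,3,4,1)$-BRDF by a composition (Kronecker-type) argument: use the finite-field structure on the maximal prime power factors of $v$ to handle the "difference" part, and multiply each base block by a fixed cyclic group $\zed_3$ that carries the relative subgroup $H$. Write $v = \prod_i q_i$ where each $q_i$ is a prime power with $q_i \equiv 1 \bmod 4$. Since $q_i \equiv 1 \bmod 4$, the field $\eff_{q_i}$ contains a square root of $-1$, and more usefully each $\eff_{q_i}^*$ has even order with a unique element of order $2$, so the nonzero squares form an index-$2$ subgroup that is closed under negation. I would work in the ring $R = \zed_3 \times \prod_i \eff_{q_i}$, which has additive group of order $3v$, and take $H = \zed_3 \times \{(0,\dots,0)\}$, the subgroup of order $3$; note $H$ contains every element of order $3$ and there are no elements of order $2$ outside $H$, consistent with property (3) of a BRDF.

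The key construction step is to produce, for each $i$, a set of base blocks in $\eff_{q_i}$ (the classical ingredient) and then "lift" to $R$. Concretely, I would first treat the prime-power-factor case $v = q$, $q \equiv 1 \bmod 4$: seek a $(3q,3,4,1)$-BRDF in $\zed_3 \times \eff_q$ whose base blocks are orbit representatives under multiplication by a suitable subgroup of units — for instance, writing $e$ for a generator of $\eff_q^*$ and letting $C$ be the group generated by $e^4$ (of index $4$), one expects roughly $(q-1)/4$ base blocks, each a coset-shaped $4$-set. The differences of such a block, computed coordinatewise in $\zed_3 \times \eff_q$, must (i) cover every element of $(\zed_3 \times \eff_q)\setminus H$ exactly once as the block ranges over its $C$-orbit together with the group translates — this is the $(G,H,4,1)$-RDF condition — and (ii) the block and its negative must be disjoint, and (iii) the block must miss $H$. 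The disjointness/negation conditions are exactly where $q \equiv 1 \bmod 4$ is used: because $-1 \in C$ is a fourth power iff... (one checks $-1$ is a square but its status as a fourth power varies), I would instead arrange the $\zed_3$-coordinate to break the symmetry, e.g.\ by placing the four points of a base block in $\zed_3$-coordinates $0,0,1,1$ or $1,1,2,2$ so that negation permutes the coordinate pattern $(1,1,2,2) \leftrightarrow (2,2,1,1)$ and disjointness of a block from its negative becomes automatic. Then, for general $v$, compose the single-factor BRDFs: take one base block to be a "product" of single-factor base blocks across the $i$'s (suitably indexed so that each difference of $(\zed_3\times\prod\eff_{q_i})\setminus H$ is hit exactly once), which is the standard Kronecker product of (relative) difference families, and check that the Banff properties (2), (3) are preserved under this product because they are preserved in each coordinate.

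I would carry this out in the following order: (1) fix notation $R$, $H$, verify $|R| = 3v$, $|H| = 3$, and that $H$ absorbs all $2$- and $3$-torsion; (2) handle $v = q$ a prime power $\equiv 1\bmod 4$ by an explicit choice of base blocks in $\zed_3 \times \eff_q$ using a generator $e$ of $\eff_q^*$ and the $\zed_3$-coordinate pattern above, proving the RDF property by a counting/coset argument and the Banff properties by the negation pattern; (3) state the Kronecker product lemma for BRDFs (product of a $(g_1,h_1,k,\lambda_1)$-BRDF and a $(g_2,h_2,k,\lambda_2)$-BRDF over coprime orders, here with $\lambda_1=\lambda_2=1$ and common block size $4$) and verify it preserves properties (1)--(3); (4) assemble the factors to get a $(3v,3,4,1)$-BRDF.

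The main obstacle I anticipate is step (2): ensuring simultaneously that the base blocks in $\zed_3\times\eff_q$ form a genuine relative difference family of index $1$ (a delicate counting identity on the multiset of coordinatewise differences) \emph{and} that each block is disjoint from its own negative and from $H$. The field hypothesis $q \equiv 1 \bmod 4$ is presumably what makes a clean choice possible — likely through the existence of $\sqrt{-1}$ and the behaviour of fourth powers — but getting the $\zed_3$-coordinate bookkeeping to cooperate with the $\eff_q$-difference bookkeeping (so that differences landing in $H$ are exactly excluded, and every other difference is covered exactly once) is the part that needs care. A secondary subtlety is the product step when $v$ has several prime-power factors: one must index the coordinates of the product base blocks so that the combined difference multiset is still uniform, which is routine for ordinary difference families but needs the extra check that the "disjoint from negative" property multiplies correctly.
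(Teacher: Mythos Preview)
Your setup is right --- work in $\zed_3\times R_v$ with $R_v=\prod_i\eff_{q_i}$ and $H=\zed_3\times\{0\}$, and your instinct to use the $\zed_3$-coordinate pattern $(1,1,2,2)$ so that negation sends a block to something disjoint is exactly what the paper does. But two concrete pieces are missing, and one of them is a genuine gap.

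First, in step~(2) you never commit to the $\eff_q$-coordinates of a base block, and this is where the whole argument lives. The paper's choice is
\[
B_s=\{(1,s),(1,-s),(2,sx),(2,-sx)\},
\]
where $x$ is a primitive fourth root of unity (this is precisely what $q\equiv1\bmod4$ buys you, not merely ``$-1$ is a square''). The single identity $x-1=x(x+1)$, which holds because $x^2=-1$, collapses the cross-differences: one computes $\Delta B_s=(\{0\}\times 2sU)\cup(\{1,2\}\times(x+1)sU)$ with $U=\{1,x,-1,-x\}$, and letting $s$ range over a transversal $S$ of the $U$-orbits on $R_v\setminus\{0\}$ gives every nonzero difference exactly once. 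The Banff property falls out from $B_s\cup(-B_s)=\{1,2\}\times sU$. Your sketch gestures at ``fourth powers'' and ``coset-shaped $4$-sets'' but never lands on this choice or this identity, and without them the difference bookkeeping you flag as the main obstacle cannot be carried out.

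Second, and more seriously, your step~(3) does not work as stated. There is no Kronecker product of a $(3q_1,3,4,1)$-BRDF with a $(3q_2,3,4,1)$-BRDF that yields a $(3q_1q_2,3,4,1)$-BRDF: the natural product of relative difference families in $G_1$ relative to $H_1$ and in $G_2$ relative to $H_2$ only covers differences in $(G_1\setminus H_1)\times(G_2\setminus H_2)$, not all of $(G_1\times G_2)\setminus(H_1\times H_2)$, and in any case the relative subgroup would have order $9$, not $3$. The paper sidesteps this completely: it never factors $v$. The same explicit blocks $B_s$ above are written directly in $\zed_3\times R_v$, using the componentwise fourth root $x=(x_1,\dots,x_j)$ in the product ring; the group $U=\langle x\rangle$ acts semiregularly on $R_v\setminus\{0\}$ precisely because each $q_i\equiv1\bmod4$, and the computation goes through verbatim. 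So the fix is not to repair the product lemma but to drop it and run the single-prime-power construction once, in the full ring.
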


\begin{proof}
Let $R_v = \prod_{i=1}^j \eff_{q_i}$ be the ring which is the direct product of all the fields $\eff_{q_i}$ with $q_i$ a maximal prime power factor
of $v$;  for instance, $R_{45}=\eff_5\times\eff_9$. 

Let $x_i$ be a primitive fourth root of 1 in each constituent field  $\eff_{q_i}$, and then define $x = (x_1, \dots , x_j)$.
It is clear that the  
 cyclic group $U$ of order four generated by $x$  
acts semiregularly on $R_v \setminus \{0\}$. 
We also note that $x^2 = -1$, so $U =\{1,x,-1,-x\}$.

This means that if $S$ is any complete system 
of representatives for the $U$-orbits on $R_v \setminus \{0\}$, then we have
\begin{equation}\label{SU}
S \otimes U= R_v \setminus \{0\}.
\end{equation}
Note that, for any two sets $A$ and $B$, we are using the notation $A \otimes B$ to denote the multiset $\{ab: a \in A, b \in B\}$. 
When $|A| = 1$, say $A = \{a\}$, we simply write $aB$ for $\{a\} \otimes B$. 

For each $s\in S$, let us consider the following 4-subset of $\zed_3\times R_v$:
$$B_s=\{(1,s),(1,-s),(2,sx),(2,-sx)\}.$$
The differences arising from pairs of elements of $B_s$ yields the set 
$$\Delta B_s = ( \{0\}\times\{\pm2s,\pm2sx\} ) \ \cup \ (\{1,2\}\times\{\pm s(x-1),\pm s(x+1)\}).$$
Since $x^2=-1$, we have $x-1=x+x^2=x(x+1)$. Thus, replacing $x-1$ with $x(x+1)$ in the above formula, we can write:
$$\Delta B_s=(\{0\}\times 2sU) \ \cup \ (\{1,2\}\times (x+1)sU).$$
Consider the family ${\mathcal F}=\{B_s \ : \ s\in S\}$. The differences arising from the sets in ${\mathcal F}$ are
\begin{align*}
\Delta{\mathcal F} & =\bigcup_{s\in S}\Delta B_s\\
&=\left( \{0\}\times\bigcup_{s\in S} 2sU \right) \ \cup \ \left( \{1,2\}\times \bigcup_{s\in S} (x+1)sU \right)\\
&=(\{0\}\times 2(S \otimes U)) \ \cup \ (\{1,2\}\times(x+1)(S\otimes U)).
\end{align*}
Then, taking into account (\ref{SU}), we have 
\begin{align*}
\Delta{\mathcal F} & = (\{0\}\times (R_v \setminus \{0\})) \ \cup \ ( \{1,2\}\times (R_v \setminus \{0\})) \\
&=(\zed_3\times R_v)\setminus(\zed_3\times\{0\}),
\end{align*}
which means that $\mathcal F$ is a $(\zed_3\times R_v,\zed_3\times\{0\},4,1)$-RDF.

Now note that we have $$B_s \ \cup \ -B_s=\{1,2\}\times sU,$$
so that, again taking  (\ref{SU})  into account,  the union of the base blocks of $\mathcal F$ and their negatives is 
$\{1,2\}\times (R_v \setminus \{0\})$, which is a set, not a multiset! This guarantees that $\mathcal F$ has the ``Banff properties"
(i.e., properties (2) and (3) in the definition of a BRDF)) and hence $\mathcal F$ is a $(3v,3,4,1)$-BRDF.
\end{proof}

We note that when $v$ is a prime, the construction given above is a slight variation of the old construction
for a $1$-rotational $(3v,4,1)$-BIBD given by E.H. Moore \cite{Moore}.

\medskip

The smallest value $v \equiv 1 \bmod 4$ that is not covered by Theorem \ref{3BRDF.thm} is $v = 21$. Here we have a BRDF that was found by computer.

\begin{Example}
{\rm We present base blocks for a $(63,3,4,1)$-BRDF in $\zed_{63}$:
\[ \{1,2,4,8\},\{3,11,16,44\},\{5,14,25,43\},\{6,18,33,50\},\{7,17,31,54\}.\]}
\end{Example}

\medskip

We will now prove that nested $(v,4,1)$-BIBDs exist for  all  $v \equiv 1 \bmod 12$. 
We adapt the recursive techniques first described in \cite{St85}. We make use of group-divisible designs with multiple block sizes and group sizes (however, for simplicity, we restrict the definition to $\lambda = 1$). Let $K$ and $L$ be sets of positive integers. A \emph{$(K,1)$-GDD with group sizes in $L$}  is a triple $(X,\G,\A)$ that satisfies the following properties:
\begin{enumerate}
\item $X$ is a set of  \emph{points}.
\item $\G$ is a partition of $X$ into \emph{groups}, say $\G = \{G_1, \dots , G_u\}$, such that $|G| \in L$ for all $G \in \G$.
\item $\A$ consists of a set of \emph{blocks} such that 
\begin{enumerate}
\item $|G \cap A| \leq 1$ for all $G \in \G$ and for all $A \in \A$,
\item every pair of points from different groups is contained in exactly one block, and
\item $|A| \in K$ for all $A \in \A$.
\end{enumerate}
\end{enumerate}
The \emph{type} of the GDD is the multiset $\{|G|: G \in \G\}$. We usually use an exponential notation to describe types:
type ${t_1}^{u_1} {t_2}^{u_2}  \dots$ denotes $u_i$ occurences of $t_i$ for $i = 1,2, \dots$. 

\medskip

The following theorem is a straightforward adaptation of the main construction from \cite{St85}. 

\begin{Theorem}
\label{recursive.thm}
Suppose $(X,\G,\A)$ is a $(K,1)$-GDD with group sizes in $L$ that satisfies the following properties:
\begin{enumerate}
\item For all $k \in K$, there is a nested $4$-GDD of type $3^k$.
\item For all $\ell \in L$, there is a nested $(3\ell + 1,4,1)$-BIBD.
\end{enumerate}
Then there is a nested $(3|X| + 1,4,1)$-BIBD.
\end{Theorem}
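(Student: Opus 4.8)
The plan is to mimic the classical ``Wilson-type'' GDD construction (exactly as in \cite{St85} for STS), inflating each point of the master GDD $(X,\G,\A)$ by a factor of three and adjoining one infinite point.

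First I would set up the point set of the target design as $\widehat X = (X \times \zed_3) \cup \{\infty\}$, so that $|\widehat X| = 3|X| + 1$, which is $\equiv 1 \bmod 12$ because $|X| \equiv 0 \bmod 4$ follows from the GDD conditions. For each block $A \in \A$, with $|A| = k \in K$, hypothesis 1 gives a nested $4$-GDD of type $3^k$ on the point set $A \times \zed_3$ with the natural groups $\{a\} \times \zed_3$; I place a copy of this nested GDD on $A \times \zed_3$, together with its nesting map. For each group $G \in \G$, with $|G| = \ell \in L$, hypothesis 2 gives a nested $(3\ell+1,4,1)$-BIBD on $(G \times \zed_3) \cup \{\infty\}$ with its nesting map; I place a copy of this there. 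The blocks of the augmented $(3|X|+1,4,1)$-design are then the union, over all $A \in \A$, of the augmented blocks coming from the nested GDD on $A \times \zed_3$, together with, over all $G \in \G$, the augmented blocks of the nested BIBD on $(G \times \zed_3) \cup \{\infty\}$. Stripping off the nested points recovers the underlying block set, which I should first check is a genuine $(3|X|+1,4,1)$-BIBD: a pair inside one ``column'' $\{a\}\times\zed_3$ is covered once, by the unique group-BIBD through that column; a pair $\{(x,i),(y,j)\}$ with $x \ne y$ in the same master group is covered once inside that group's BIBD; a pair with $x,y$ in different master groups is covered once because the master GDD has index one, so exactly one block $A$ contains $\{x,y\}$, and the nested GDD on $A \times \zed_3$ covers that transversal pair once; pairs involving $\infty$ are covered once via the group BIBDs since each $x \in X$ lies in exactly one group. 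So the underlying design is a $(3|X|+1,4,1)$-BIBD, and the adjoined points form a nesting provided the augmented blocks form a \emph{partial} $(3|X|+1,5,2)$-BIBD.

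The heart of the argument, and the step I expect to be the main obstacle, is verifying that no pair of points occurs more than twice among the augmented blocks. Within a single component (one nested GDD, or one nested group-BIBD) this is automatic since each is a partial $(\cdot,5,2)$-design. The danger is a pair $\{p,q\}$ that picks up one occurrence as a ``real'' pair in the underlying BIBD (hence $\le 1$ from the unique block covering it) and then an extra occurrence as a \emph{nested} pair, i.e., where $p$ is a nested point of some augmented block containing $q$ — and this could conceivably happen in two different components, or the same pair could be a nested pair in two components. Here the grouping of points into ``columns'' $\{x\} \times \zed_3$ together with $\infty$ does the work: if the pair $\{p,q\}$ lies within a single column or within a single master group (or involves $\infty$), then \emph{every} block meeting both $p$ and $q$ — as a real pair or as a nested pair — lives in the one group-BIBD on that group, and that design already controls the pair; if $p = (x,i)$ and $q = (y,j)$ lie in distinct master groups, then every augmented block meeting both lives in the nested GDD on the unique $A \supseteq \{x,y\}$, which again controls the pair. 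Thus each pair is governed by exactly one component, and the partial-design property in that component guarantees multiplicity at most two globally. Finally I would record that the nesting map is well-defined (the components have disjoint block sets) and count blocks if desired, completing the construction of a nested $(3|X|+1,4,1)$-BIBD.

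I would also double-check the edge cases: that $\infty$ together with any $(x,i)$ is covered in exactly one group-BIBD (true since $x$ belongs to exactly one $G\in\G$), and that when $|A| = k$ we indeed need a \emph{nested} $4$-GDD of type $3^k$ rather than a perfect one — consistent with hypothesis 1 as stated, since nestings of $4$-GDDs of type $3^k$ are not perfect. The construction is a routine adaptation of \cite{St85}, so the write-up can be kept short, with the pair-counting case analysis being the only part that needs care.
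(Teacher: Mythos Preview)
Your proposal is correct and follows exactly the paper's approach: inflate by weight three via Wilson's Fundamental GDD Construction, replace blocks by nested $4$-GDDs of type $3^k$, and fill groups (plus the single infinite point) with nested $(3\ell+1,4,1)$-BIBDs. Your detailed pair-by-pair verification that the augmented design is a partial $(3|X|+1,5,2)$-BIBD is sound; the key observation that each pair is ``governed'' by a unique component (one group-BIBD or one block-GDD) is precisely what makes the nesting transfer, and your case analysis covers it cleanly, including the crucial fact that augmented blocks of a nested GDD never meet a group in two points.

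One small correction: your aside that ``$|X|\equiv 0 \bmod 4$ follows from the GDD conditions'' is not part of the hypotheses and is not always true in the paper's applications. Indeed the theorem is also used in Section~\ref{4mod12.sec} to produce nested $(v,4,1)$-BIBDs with $v\equiv 4 \bmod 12$ (e.g., Corollary~\ref{4.cor} with $m\equiv 1\bmod 4$), where $|X|\equiv 1\bmod 4$. This remark plays no role in your argument, so simply drop it.
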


\begin{proof}
Start with the hypothesized GDD $(X,\G,\A)$, give every point weight three and apply Wilson's Fundamental GDD construction \cite[\S IV.2.1, Theorem 2.5]{CD}. Blocks are replaced by nested GDDs whose groups are the three copies of each of the points in the block. The groups are replaced by nested BIBDs that include one ``new'' point along with the copies of all the points in the group.
\end{proof}

We also require the notion of a \emph{pairwise balanced design} (or \emph{PBD}).
A \emph{$(v,K)$-PBD}  is a pair $(X,\A)$ that satisfies the following properties:
\begin{enumerate}
\item $X$ is a set of $v$ \emph{points}.
\item $\A$ consists of a set of \emph{blocks} such that 
\begin{enumerate}
\item every pair of points is contained in exactly one block, and
\item $|A| \in K$ for all $A \in \A$.
\end{enumerate}
\end{enumerate}

%
%
We can now prove the following complete existence result for $v \equiv 1 \bmod 12$.

\begin{Theorem}
\label{1mod12.thm}
For all $v \equiv 1 \bmod 12$, there is a nested $(v,4,1)$-BIBD.
\end{Theorem}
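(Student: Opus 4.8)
The plan is to reduce the theorem to the recursive machinery of Theorem~\ref{recursive.thm}, fed by the direct constructions already in hand. Write $v=3n+1$; since $v\equiv 1\bmod 12$, we have $n\equiv 0\bmod 4$. By Theorem~\ref{recursive.thm} it suffices, for each such $n$, to exhibit a $(K,1)$-GDD on $n$ points with group sizes in $L$, where $K$ is a set of integers $k$ admitting a nested $4$-GDD of type $3^k$ and $L$ is a set of integers $\ell$ admitting a nested $(3\ell+1,4,1)$-BIBD. Since a $(3k,3,4,1)$-BRDF is exactly a nested $4$-GDD of type $3^k$, Theorem~\ref{3BRDF.thm} gives $K\supseteq\{\,k:\text{every maximal prime power factor of }k\text{ is}\equiv 1\bmod 4\,\}$, and the Example following it adjoins $k=21$; every such $k$ is itself $\equiv 1\bmod 4$, and in particular $5,9,13,17,21,25\in K$. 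For $L$: Table~\ref{BIBDk=4.tab} gives $\ell\in\{4,8,12,16,20\}$, the $(\zed_{85},\{0\},4,1)$-BRDF gives $\ell=28$, and the $(\eff_q,\{0\},4,1)$-BRDFs recorded above give $\ell=(q-1)/3$ for every prime power $q\equiv 1\bmod 12$. In particular, each of $v=13,25,37,49,61,73,85,97,109,121$ already carries a nested $(v,4,1)$-BIBD.

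The argument then runs by strong induction on $n$ (equivalently on $v$), the orders just listed serving as base cases. For the inductive step, given $n$ beyond the base range, the first tool is a factorisation $n=km$ with $k\in K$, $m\equiv 0\bmod 4$ and $m<n$, for which a transversal design $\mathrm{TD}(k,m)$ exists: a $\mathrm{TD}(k,m)$ is a $(\{k\},1)$-GDD of type $m^k$, its block size $k$ lies in $K$, and its group size $m$ lies in $L$ by the induction hypothesis (a nested $(3m+1,4,1)$-BIBD exists, as $3m+1\equiv 1\bmod 12$ and $3m+1<v$), so Theorem~\ref{recursive.thm} yields a nested $(3n+1,4,1)$-BIBD. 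Since $5\in K$ and $\mathrm{TD}(5,m)$ exists for every $m\equiv 0\bmod 4$, this at once settles all $n\equiv 0\bmod 20$; using $k=9,13,17,21,25,\dots$ in the same way covers many further values of $n$.

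The residual $n$---those with no divisor in $K$ whose cofactor is a multiple of $4$, the first being $n=44$ and $n=48$ (that is, $v=133$ and $v=145$)---I would instead handle with a $(\{k\},1)$-GDD of uniform type $g^u$ for a suitable small $g\in\{4,8,12,16,20\}\subseteq L$ and $k\in K$; for example a $(\{5\},1)$-GDD of type $4^{11}$ gives $v=133$ (filled with nested $(13,4,1)$-BIBDs) and a $(\{5\},1)$-GDD of type $8^{6}$ gives $v=145$ (filled with nested $(25,4,1)$-BIBDs). As $(\{5\},1)$-GDDs of type $g^u$ with $g\equiv 0\bmod 4$ exist for the admissible values of $u$, a finite check then shows that every admissible $n$ is covered by one of: a direct construction, a $\mathrm{TD}(k,m)$ with $k\in K$, or a $(\{5\},1)$-GDD of type $g^u$ with $g\in L$.

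I expect the main obstacle to be exactly this last bookkeeping: arranging that every residue class of $n$ admits one uniform recursive decomposition with admissible group and block sizes, and isolating the finitely many small or exceptional orders (such as $85$) that must be constructed directly. No design-theoretic input beyond the BRDFs already presented is needed---the relevant existence statements for transversal designs and for $(\{5\},1)$-GDDs are classical---so the difficulty lies entirely in making the case analysis exhaustive.
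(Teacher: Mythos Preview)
Your plan correctly targets Theorem~\ref{recursive.thm}, and your choice of $K\supseteq\{5,9\}$ and $L\supseteq\{4,8\}$ is sound. The genuine problem is that transversal designs and \emph{uniform} $(\{k\},1)$-GDDs do not, by themselves, cover every $n\equiv 0\bmod 4$; the bookkeeping you defer is not merely tedious but impossible with these tools alone. Take $n=68$ (so $v=205$, not a prime power): the only factorisation $68=km$ with $k\in K$ and $m\equiv 0\bmod 4$ is $k=17$, $m=4$, but $\mathrm{TD}(17,4)$ does not exist (only three MOLS of order~$4$). For a uniform $(\{k\},1)$-GDD of type $g^u$ with $g\equiv 0\bmod 4$ and $gu=68$, the only candidate is $g=4$, $u=17$, and this fails the divisibility conditions for every admissible $k$: for $k=5$ one needs $u(u-1)\equiv 0\bmod 5$, but $17\cdot16\equiv 2\bmod 5$; for $k=9$ one needs $16u(u-1)\equiv 0\bmod 72$, which also fails. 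The same obstruction recurs at $n=108$ ($v=325$), $n=148$ ($v=445$), and in general whenever $n/4$ is a prime (or has only large prime factors) lying in the wrong residue class mod~$5$. So your closing claim---that no design-theoretic input beyond the BRDFs is needed---is precisely where the argument breaks.

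The paper's proof sidesteps this by working with \emph{non-uniform} group types via PBD closure. It quotes the known fact that a $(w,\{5,9\})$-PBD exists for all $w\equiv 1\bmod 4$ except $w\in\{13,17,29,33,113\}$, and deletes a point to obtain a $(\{5,9\},1)$-GDD on $w-1\equiv 0\bmod 4$ points with group sizes in $\{4,8\}$. Theorem~\ref{recursive.thm} then needs only four fixed ingredients---nested $4$-GDDs of types $3^5$ and $3^9$, and nested $(13,4,1)$- and $(25,4,1)$-BIBDs---all already in hand. The five exceptional $v$ (namely $37,49,85,97,337$) are primes, prime powers, or $85$, each covered directly. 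The missing design-theoretic input in your sketch is exactly this $\{5,9\}$-PBD result; once you allow mixed group sizes coming from a PBD, no induction or residual case analysis is required at all.
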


\begin{proof}
It is well-known that there is a $(w,\{5,9\})$-PBD for all 
$w \equiv 1 \bmod 4$, $w \neq 13,17,29,33,113$ (see \cite[\S IV.3.2, Table 3.23]{CD}).
Delete a point $x$ from such a PBD. The blocks that contained $x$ yield the groups (having sizes $4$ and $8$) of a GDD. 
Hence, we obtain a $\{5,9\}$-GDD with groups sizes in $\{4,8\}$, for all 
$w \equiv 0 \bmod 4$, $w \neq 12,16,28,32,112$. 
There is a nested $(13,4,1)$-BIBD and a nested $(25,4,1)$-BIBD, as well as nested $4$-GDDs of types $3^5$ and $3^9$ (see Table \ref{BIBDk=4.tab} and Theorem \ref{3BRDF.thm}). 
Applying Theorem \ref{recursive.thm}, we obtain
a nested $(3w + 1,4,1)$-BIBD for all $w \equiv 0 \bmod 4$, $w \neq 12,16,28,32,112$. 
The results from \cite{BMNR} that were mentioned at the beginning of this section
show that nested $(3w + 1,4,1)$-BIBDs exist for these five values of $w$.

\end{proof}

\section{Nesting $(v,4,1)$-BIBDs with $v \equiv 4 \bmod 12$}
\label{4mod12.sec}

We begin by presenting various small nested $(v,4,1)$-BIBDs for $v \equiv 4 \bmod 12$. 

\begin{Example}
\label{1641.exam}
{\rm A nested $(16,4,1)$-BIBD. We list the 20 blocks in the augmented design (the last point in each block is the nested point):
\[
\begin{array}{cccc}
 \{3, 6, 12, 7, 1\} \quad \quad  &
 \{4, 7, 8, 15, 2\} \quad \quad  & 
  \{5, 8, 9, 16, 1\} \quad \quad  & 
 \{6, 9, 15, 10, 3\} \\
  \{ 7, 16, 11, 13, 3\} \quad \quad  &
 \{ 8, 11, 10, 12, 6\} \quad \quad  & 
  \{  9, 12, 13, 14, 4\} \quad \quad  & 
 \{11, 14, 15, 1, 10\} \\
   \{ 12, 15, 16, 2, 9\} \quad \quad  &
 \{ 16, 1, 10, 3, 4\} \quad \quad  & 
  \{10, 2, 13, 4, 5\} \quad \quad  & 
 \{ 15, 13, 3, 5, 14\} \\
   \{ 16, 4, 14, 6, 7\} \quad \quad  &
 \{10, 14, 5, 7, 8\} \quad \quad  & 
  \{ 13, 6, 1, 8, 15\} \quad \quad  & 
 \{ 1, 7, 2, 9, 13\} \\
   \{14, 2, 8, 3, 12\} \quad \quad  &
 \{3, 9, 4, 11, 8\} \quad \quad  & 
  \{1, 4, 5, 12, 11\} \quad \quad  & 
 \{ 2, 5, 11, 6, 16\} 
\end{array}
\]}
\end{Example}

\begin{Example}
\label{2841.exam}
{\rm A nested $(28,4,1)$-BIBD. We take the points to be 
$({\zed_3})^3 \cup \{ \infty\}$.
We specify three base blocks. The first two base blocks are developed modulo $(3,3,3)$; the last base block is developed modulo
$(-,3,3)$.
The base blocks are as follows (the last point in each block is the nested point):
\[
\begin{array}{ll}
\{(0,1,0), (0,2,0), (1,1,2), (1,2,1), (0,1,2)\} \quad \quad \quad &
\{(0,0,1), (0,0,2), (1,2,2), (1,1,1), (2,0,0)\} \\
\{(0,0,0), (1,0,0), (2,0,0), \infty, (0,1,0)\} &
\end{array}
\]
This augmented design has an automorphism of order $9$, generated by $\{0\} \times \zed_3 \times \zed_3$.}
\end{Example}

\smallskip

The next examples are all defined in cyclic groups $\zed_v$, where $v \equiv 4 \bmod 12$.  The designs are generated from $(v+8)/12$ base blocks. The last base block will be $\{ 0, v/4, v/2,3v/4,x\}$ for some $x$. We take all translates of the first $(v-4)/12$ base blocks, and the first $v/4$ translates of the last base block. 

To construct a nested $(v,4,1)$-BIBD, we could start with a known example of a cyclic $(v,4,1)$-BIBD (which without loss of generality contains $\{ 0, v/4, v/2,3v/4\}$ as a base block) and then find a valid nested point for each base block. This is very quick using backtracking. Basically all that is required is that the differences (plus and minus) that include a nested point are all different and none of them is $0$ or $v/2$.  Note that this does not create a cyclic nested BIBD because we only take the first $v/4$ translates of the last base block.

It was recently shown in \cite{ZFW} that a cyclic $(v,4,1)$-BIBD exists if and only if 
$v \equiv 1 \text{ or } 4 \bmod 12$, $v \neq 16,25$ or $28$. Many of the nested BIBDs we construct use known examples of cyclic $(v,4,1)$-BIBDs as a starting point. These BIBDs can be found in several papers, including 
\cite{B2002,Chang,Chen-Wei}. 

Here is a different (but essentially equivalent) way of looking at the problem.
Clearly we can replace any base block by any translate of it. Suppose we take translates of the first $(v-4)/12$ base blocks (we are just omitting the last base block) in such a way that the nested point in each of these base blocks is $0$. If we delete the nested points from the augmented BIBD, then we have a slight relaxation of a $(v,4,4,1)$-BRDF that we call a \emph{weak $(v,4,4,1)$-BRDF}. 
The only difference from a BRDF is that, in a weak $(v,4,4,1)$-BRDF, base blocks may contain one of the two points $v/4$ or $3v/4$, whereas this is not allowed in a $(v,4,4,1)$-BRDF. That is, the first $(v-4)/12$ base blocks thus obtained satisfy properties (1) and (3) of a $(v,4,4,1)$-BRDF, but they are not required to satisfy property (2).

Thus, given a weak $(v,4,4,1)$-BRDF, we have implicitly defined 0 to be the nested point for each base block. In order to obtain the augmented BIBD, we need to also find a suitable nested point $x$ for the ``short orbit'' $H = \{ 0, v/4, v/2,3v/4\}$ (which is just the subgroup of order $4$).
A point $x$ will be \emph{suitable} if the set 
\[ \pm(H - x)  = \{ \pm x , \pm (x - v/4), \pm (x - v/2), \pm (x - 3v/4) \}  \]
is disjoint from the union of the blocks in the BRDF and their negatives. 

Summarizing, we have the following connections between BRDFs, weak BRDFs, nested GDDs and nested BIBDs.
\begin{itemize}
\item A $(v,4,4,1)$-BRDF generates a nested GDD of type $4^{v/4}$ by nesting every base block with 0 and developing the augmented base blocks through the group $G$.
\item Suppose we have a weak $(v,4,4,1)$-BRDF and we are also able to find a suitable nested point $x$ for the subgroup $H$ of order 4.
Then the development of the augmented base blocks through $G$, together with the first $v/4$ translates of $H \cup \{x\}$, 
form a nested $(v,4,1)$-BIBD.
\end{itemize}

In the following examples, the nested points are the last points in each base block.

\begin{Example}
\label{4041.exam}
%
%
{\rm Suppose we start with the following weak $(40,4,4,1)$-BRDF: 
\[
\begin{array}{ccc}
\{2,1,16,10\} \quad \quad \quad &
\{25,18,6,29\} \quad \quad \quad &
\{12,9,36,14\}. 
\end{array}
\] 
These three blocks do not comprise a $(40,4,4,1)$-BRDF because $10 = v/4$ occurs  in the first base block. 

The point $x = 3$ is suitable, since
\[\pm (H - 3) = \{ \pm 3 , \pm 7, \pm 17, \pm 13 \}\] is disjoint from the three base blocks and their negatives.
It follows that we obtain the following base  blocks for a nested $(40,4,1)$-BIBD (where the last point in each block is the nested point):
\[
\begin{array}{ccc}
\{2,1,16,10,0\} \quad \quad \quad &
\{25,18,6,29,0\} \quad \quad \quad &
\{12,9,36,14,0\}\\
\{0,10,20,30,3\} \quad \quad \quad &
\end{array}
\] 
The first three base blocks are developed modulo $40$, and the last base block is translated by $0, \dots , 9$.

At the present time, we do not have a set of base blocks for a nested $(40,4,1)$-BIBD in which the first three base blocks 
form a BRDF. However, we do have an example of a $(40,4,4,1)$-BRDF:
\[
\begin{array}{ccc} 
\{3,2,17,11\} \quad \quad \quad &
\{1,34,22,5\} \quad \quad \quad &
\{12,9,36,14\}
\end{array}
\] }
\end{Example}

\medskip

The following examples use a presentation similar to Example \ref{4041.exam}.
All of the base blocks (except for the last one) are developed modulo $v$. For the last base block, we just take the first $v/4$ translates. The nested point is always the last point in a base block.

\begin{Example}
\label{5241.exam}
%
%
{\rm Base blocks for a nested $(52,4,1)$-BIBD:
\[
\begin{array}{ccc}
\{41,40,19,46,0\} \quad \quad \quad &
\{17,15,8,50,0\} \quad \quad \quad &
\{27,24,9,38,0\}\\
\{51,47,31,7,0\} \quad \quad \quad &
\{0,13,26,39,3\} \quad \quad \quad &
\end{array}
\]
Here the first four base blocks yield a BRDF, because neither $13$ nor $39$ is  an element of any of the first four base blocks.

We remark that an alternate construction for $v=52$ is given in Example \ref{4x13.exam}.}
\end{Example}

\begin{Example}
\label{6441.exam}
%
%
{\rm Base blocks for a nested $(64,4,1)$-BIBD: 
\[
\begin{array}{ccc}
\{20,21,23,27,0\} \quad \quad \quad &
\{50,55,4,33,0\} \quad \quad \quad &
\{26,34,59,6,0\}\\
\{57,2,12,36,0\} \quad \quad \quad &
\{63,11,25,48,0\} \quad \quad \quad &
\{0,16,32,48,3\}
\end{array}
\]
Here the first five base blocks yield a BRDF.}
\end{Example}

\begin{Example}
\label{7641.exam}
%
%
{\rm Base blocks for a nested $(76,4,1)$-BIBD: 
\[
\begin{array}{ccc}
\{4,5,11,26,0\} \quad \quad \quad &
\{7,9,18,52,0\} \quad \quad \quad &
\{20,23,3,15,0\}\\
\{42,46,74,16,0\} \quad \quad \quad &
\{47,57,8,22,0\} \quad \quad \quad &
\{75,12,35,59,0\}\\
\{0,19,38,57,6\} \quad \quad \quad &
\end{array}
\]
Here the first six base blocks yield a BRDF.}
\end{Example}

\begin{Example}
\label{8841.exam}
%
%
{\rm Base blocks for a nested $(88,4,1)$-BIBD:
\[
\begin{array}{ccc}
\{8,9,13,26,0\} \quad \quad \quad &
\{14,17,29,68,0\} \quad \quad \quad &
\{19,28,64,5,0\}\\
\{40,42,61,72,0\} \quad \quad \quad &
\{51,57,82,4,0\} \quad \quad \quad &
\{63,70,2,30,0\}\\
\{87,7,23,49,0\} \quad \quad \quad &
\{0,22,44,66,10\} \quad \quad \quad &

\end{array}
\]
Here the first seven base blocks yield a BRDF.}
\end{Example}

\begin{Example}
\label{10041.exam}
%
%
{\rm Base blocks for a nested $(100,4,1)$-BIBD: 
\[
\begin{array}{ccc}
\{96,97,99,5,0\} \quad \quad \quad &
\{94,98,14,53,0\} \quad \quad \quad &
\{88,93,19,41,0\}\\
\{62,73,92,24,0\} \quad \quad \quad &
\{70,77,91,33,0\} \quad \quad \quad &
\{29,57,69,42,0\}\\
\{49,84,66,20,0\} \quad \quad \quad &
\{79,56,89,13,0\} \quad \quad \quad &
\{0,25,50,75,10\}\\
\end{array}
\]
Here the first eight base blocks yield a BRDF.

We remark that an alternate construction for $v=100$ is given in Theorem \ref{pp.thm}.}
\end{Example}

\begin{Example}
\label{11241.exam}
%
%
{\rm Base blocks for a nested $(112,4,1)$-BIBD: 
\[
\begin{array}{ccc}
\{111,4,12,21,0\} \quad \quad \quad &
\{110,8,19,38,0\} \quad \quad \quad &
\{107,7,9,54,0\}\\
\{86,101,102,15,0\} \quad \quad \quad &
\{88,106,27,51,0\} \quad \quad \quad &
\{13,33,71,77,0\}\\
\{55,78,82,16,0\} \quad \quad \quad &
\{14,48,83,90,0\} \quad \quad \quad &
\{63,92,95,32,0\}\\
\{0,28,56,84,3\} \quad \quad \quad &
\end{array}
\]
Here the first nine base blocks yield a BRDF.}
\end{Example}

\begin{Example}
\label{12441.exam}
%
%
{\rm Base blocks for a nested $(124,4,1)$-BIBD: 
\[
\begin{array}{ccc}
\{1,2,67,4,0\} \quad \quad \quad &
\{3,8,85,18,0\} \quad \quad \quad &
\{5,30,43,80,0\}\\
\{7,40,48,88,0\} \quad \quad \quad &
\{6,51,107,15,0\} \quad \quad \quad &
\{22,66,38,11,0\}\\
\{75,87,104,53,0\} \quad \quad \quad &
\{78,14,99,92,0\} \quad \quad \quad &
\{45,97,26,115,0\}\\
\{65,69,89,95,0\} \quad \quad \quad &
\{0,31,62,93,10\} \quad \quad \quad &
\end{array}
\]
Here the first ten base blocks yield a BRDF.}
\end{Example}




Several additional small examples of nested $(v,4,1)$-BIBDs (for $v \equiv 4 \bmod 12$) are presented in the Appendix.

\medskip

From Examples \ref{1641.exam}--\ref{12441.exam} and \ref{13641.exam}--\ref{19641.exam}, we have

\begin{Lemma}
\label{mod96.lem}
There exist nested $(v,4,1)$-BIBDs for $v \equiv 4 \bmod 12$, $16 \leq v \leq 196$.
\end{Lemma}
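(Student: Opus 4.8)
The plan is to dispatch all sixteen values $v$ with $v \equiv 4 \bmod 12$ and $16 \le v \le 196$, namely $v \in \{16,28,40,52,64,76,88,100,112,124,136,148,160,172,184,196\}$, by exhibiting an explicit nested $(v,4,1)$-BIBD for each; this is precisely what Examples \ref{1641.exam}--\ref{12441.exam} (covering $16 \le v \le 124$) and \ref{13641.exam}--\ref{19641.exam} (covering $136 \le v \le 196$) do, so the substance of the proof is to verify that each listed construction is correct. For $v = 16$ the twenty augmented blocks are written out in full, and for $v = 28$ three base blocks are developed over an automorphism group of order nine; in both cases one checks directly that every pair of points occurs in at most two augmented blocks, which is the definition of a nested $(v,4,1)$-BIBD.

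For each of the remaining (cyclic) values, the corresponding example supplies $(v+8)/12$ base blocks in $\zed_v$: the first $(v-4)/12$ of them have the nested point $0$ appended, and the last is $\{0,v/4,v/2,3v/4,x\}$ for an explicitly chosen $x$. Following the discussion preceding Example \ref{4041.exam}, I would first verify that deleting the appended $0$ from those first $(v-4)/12$ augmented base blocks yields a weak $(v,4,4,1)$-BRDF, i.e., that the resulting $4$-subsets satisfy property (1) of a $(v,4,4,1)$-BRDF (their differences cover $\zed_v \setminus H$ exactly once, with $H = \{0,v/4,v/2,3v/4\}$) and property (3) (no base block meets its own negative), property (2) being relaxed to allow $v/4$ or $3v/4$ to occur in a base block. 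I would then check that the point $x$ is \emph{suitable}: that $\pm(H-x)$ is disjoint from the union of the first $(v-4)/12$ base blocks together with their negatives. Granting both checks, the itemized construction at the end of Section \ref{4mod12.sec}---develop the augmented base blocks through $\zed_v$, then adjoin the first $v/4$ translates of $H \cup \{x\}$---produces a nested $(v,4,1)$-BIBD. Since all sixteen values are covered in this way, the lemma follows.

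Every verification needed above is finite and elementary and was carried out by computer; the genuine difficulty was in \emph{finding} the base blocks, typically by backtracking from a known cyclic $(v,4,1)$-BIBD containing the short orbit $\{0,v/4,v/2,3v/4\}$ and searching for a compatible set of nested points (equivalently, a weak BRDF plus a suitable nested point for the short orbit). Thus I expect the only real obstacle here to be computational, not conceptual. For the values of $v$ where a full $(v,4,4,1)$-BRDF---not merely a weak one---is exhibited, the same verification applies verbatim, since a BRDF is in particular a weak BRDF.
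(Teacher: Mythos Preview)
Your proposal is correct and takes essentially the same approach as the paper: the lemma is established by direct citation of the explicit constructions in Examples \ref{1641.exam}--\ref{12441.exam} and \ref{13641.exam}--\ref{19641.exam}, and you correctly identify the verification framework (weak BRDF plus a suitable nested point for the short orbit) that underlies the cyclic examples. One small imprecision: property~(3) of a BRDF is not merely that each base block is disjoint from its own negative, but that \emph{all} base blocks together with \emph{all} their negatives are pairwise disjoint---this stronger condition is what guarantees that the augmented differences involving the nested point $0$ never repeat across different base blocks.
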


Nested $(4,1)$-GDDs of type $3^u$, $u \equiv 0 \bmod 4$, are useful ingredients in constructing nested $(v,4,1)$-BIBDs with $v \equiv 4 \bmod 12$. We present one such  nested GDD in Example \ref{38GDD.exam}.

\begin{Example}
\label{38GDD.exam}
{\rm A nested $(4,1)$-GDD of type $3^8$.
There are $42$ blocks in this GDD, and $42 = 2  \times 21$, so we want two base blocks developed modulo $21$. But there are $24$ points. So we take the point set of the GDD to be $\zed_{21} \ \cup \ \{\infty_0,\infty_1,\infty_2\}$. 
The groups are the seven cosets of $\{0,7,14\}$ in $\zed_{21}$ and $\{\infty_0,\infty_1,\infty_2\}$.
The blocks are the $\zed_{21}$-orbits of 
$$\{2,4,10,13,{0}\},\quad\{15,16,20,\infty_0,{0}\}$$
where the action of $\zed_{21}$ on $\zed_{21}$ itself is the natural one, whereas the action of $\zed_{21}$
on $\{\infty_0,\infty_1,\infty_2\}$ is defined by $\infty_i+g=\infty_{i+g \bmod 3}$ for every pair $(i,g)\in\{0,1,2\}\times\zed_{21}$.
As usual, the last point in each block is the nested point.}
\end{Example}

\medskip

We will now state some useful corollaries of Theorem \ref{recursive.thm}.
We observe that we can only apply Theorem \ref{recursive.thm} using a GDD having block and group sizes that are congruent to $0$ or $1$ modulo $4$. 

Our main construction makes use of \emph{transversal designs} TD$(k,m)$. A TD$(k,m)$ is a $(k,1)$-GDD of type $m^k$. Thus every block in a transversal design is a transversal of the $k$ groups. It is well-known that a TD$(k,m)$ is equivalent to $k-2$ mutually orthogonal latin squares of order $m$.

\begin{Corollary}
\label{4.cor}
Suppose $m \equiv 0\text{ or } 1 \bmod 4$ and let $0 \leq t \leq m$. Suppose 
there exists a TD$(9,m)$ and a nested $(3t + 1,4,1)$-BIBD.
If $m \equiv 1 \bmod 4$, suppose also that there exists a nested $(3m + 1,4,1)$-BIBD.
Then there exists a
nested $(3(8m+t) + 1,4,1)$-BIBD.
\end{Corollary}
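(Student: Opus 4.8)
The plan is to derive the required nested BIBD from the recursive construction of Theorem~\ref{recursive.thm}, applied to a suitably truncated transversal design. First I would take the hypothesized TD$(9,m)$ and truncate one of its nine groups to an arbitrary subset of size $t$. Because every pair of points from distinct groups of a transversal design lies in exactly one block, this truncation yields a $(\{8,9\},1)$-GDD of type $m^8 t^1$: a block meeting the truncated group at a surviving point retains size $9$, while every other block drops to size $8$. (If $t = m$ only blocks of size $9$ occur; if $t = 0$ there is no group of size $t$ and only blocks of size $8$ occur.) This GDD has $8m + t$ points.

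Next I would check the two hypotheses of Theorem~\ref{recursive.thm} for this GDD, whose block sizes lie in $K \subseteq \{8,9\}$ and whose group sizes lie in $L \subseteq \{m,t\}$. For the block sizes, a nested $(4,1)$-GDD of type $3^8$ is given in Example~\ref{38GDD.exam}, and a nested $(4,1)$-GDD of type $3^9$ is obtained by applying Theorem~\ref{3BRDF.thm} with $v = 9$ (whose unique maximal prime power factor $9$ is $\equiv 1 \bmod 4$): this produces a $(27,3,4,1)$-BRDF, hence the required nested GDD of type $3^9$. For the group sizes, a nested $(3t+1,4,1)$-BIBD is assumed in the statement (a nested $(1,4,1)$-BIBD being the empty design when $t = 0$), so it only remains to supply a nested $(3m+1,4,1)$-BIBD: when $m \equiv 1 \bmod 4$ this is part of the hypotheses, while when $m \equiv 0 \bmod 4$ we have $3m+1 \equiv 1 \bmod 12$ and such a design exists by Theorem~\ref{1mod12.thm}. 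Having verified both hypotheses, I would invoke Theorem~\ref{recursive.thm} to conclude that there is a nested $(3(8m+t)+1,4,1)$-BIBD.

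I do not anticipate a genuine obstacle here. The two points requiring attention are the routine verification that truncating a transversal design produces a legitimate GDD, and keeping the case analysis on $m \bmod 4$ straight --- which is precisely why the statement must assume a nested $(3m+1,4,1)$-BIBD when $m \equiv 1 \bmod 4$ yet can omit it when $m \equiv 0 \bmod 4$. It is also worth observing that the hypothesis ``there exists a nested $(3t+1,4,1)$-BIBD'' already forces $t \equiv 0$ or $1 \bmod 4$, so that all of $8$, $9$, $m$, $t$ are congruent to $0$ or $1$ modulo $4$; this is exactly the condition under which Theorem~\ref{recursive.thm} is applicable, and no separate congruence hypothesis on $t$ is needed.
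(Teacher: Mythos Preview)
Your proposal is correct and follows essentially the same approach as the paper: truncate one group of a TD$(9,m)$ to size $t$ to obtain an $(\{8,9\},1)$-GDD of type $m^8 t^1$, supply nested $(4,1)$-GDDs of types $3^8$ and $3^9$ via Example~\ref{38GDD.exam} and Theorem~\ref{3BRDF.thm}, supply the nested $(3m+1,4,1)$- and $(3t+1,4,1)$-BIBDs from Theorem~\ref{1mod12.thm} or the hypotheses as appropriate, and apply Theorem~\ref{recursive.thm}. Your additional remarks on the degenerate cases $t=0,m$ and on why $t\equiv 0,1\bmod 4$ is automatic are helpful but not essential.
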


\begin{proof}
Delete $m-t$ points from a group of a TD$(9,m)$. We obtain an $(\{8,9\},1)$-GDD of type $m^8 t^1$.
From Example \ref{38GDD.exam} and Theorem  \ref{3BRDF.thm} with $v=9$, there are nested $(4,1)$-GDDs of types $3^{8}$ and $3^9$ (resp.). 
If $m \equiv 0 \bmod 4$, there is a nested $(3m + 1,4,1)$-BIBD by Theorem \ref{1mod12.thm}.
If $m \equiv 1 \bmod 4$, there is a nested $(3m + 1,4,1)$-BIBD by hypothesis.
Finally, there is a nested $(3t + 1,4,1)$-BIBD by hypothesis.
The desired result is then obtained by applying Theorem \ref{recursive.thm}.
\end{proof}

It will be useful to know when TD$(9,m)$ exist. Results can be found in the \emph{Handbook of Combinatorial Designs} \cite[\S III.3.6, Table 3.87]{CD}.
First we focus on $m \equiv 0 \bmod 4$. We will actually only make use of TD$(9,m)$ with $m \equiv 0 \bmod 8$, since there are almost no ``small'' examples of TD$(9,m)$ with $m \equiv 4 \bmod 8$. On the other hand, TD$(9,m)$ with $m \equiv 0 \bmod 8$ always exist.

\begin{Lemma}\label{TD9.lem} 
If $m \equiv 0 \bmod 8$, then there is a TD$(9,m)$.
\end{Lemma}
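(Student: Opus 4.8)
The plan is to translate the statement into the language of mutually orthogonal Latin squares and then assemble a few standard constructions. Recall (as noted in the excerpt) that a TD$(9,m)$ exists if and only if there are $7$ mutually orthogonal Latin squares of order $m$, i.e.\ $N(m)\geq 7$ in the usual notation; so the goal is to show $N(m)\geq 7$ whenever $8\mid m$.

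First I would handle the ``generic'' case with MacNeish's product bound $N(m_1m_2)\geq\min\{N(m_1),N(m_2)\}$ together with $N(q)=q-1$ for every prime power $q$ (both standard; see \cite{CD}). Write $m=2^a\prod_i q_i^{b_i}$ with $a\geq 3$ and the $q_i$ odd primes. Since $N(2^a)=2^a-1\geq 7$, MacNeish already gives $N(m)\geq 7$ provided every odd prime-power component $q_i^{b_i}$ satisfies $q_i^{b_i}\geq 9$ (the smallest odd prime power with $N\geq 7$, since $N(3)=2$, $N(5)=4$, $N(7)=6$, $N(9)=8$). Hence the only values of $m\equiv 0\bmod 8$ still to be dealt with are those divisible by $3$ but not $9$, or by $5$ but not $25$, or by $7$ but not $49$.

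For these I would separate small from large. There is an explicit constant $M$ with $N(m)\geq 7$ for every $m\geq M$, because $\{m:N(m)<7\}$ is finite and its elements are tabulated in \cite[\S III.3.6, Table 3.87]{CD}; so every $m\geq M$ with $8\mid m$ is immediately fine. The finitely many remaining values $m<M$ with $8\mid m$ carrying only a single factor of $3$, $5$ or $7$ — namely $m\in\{24,40,48,56,80,96,112,120,\dots\}$ below $M$ — are then checked one at a time against the same table, where each is recorded with $N(m)\geq 7$; these entries rest on ad hoc product and truncation constructions rather than on MacNeish.

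The main obstacle is precisely this last step: MacNeish's bound is worthless for orders carrying only a single factor of $3$, $5$ or $7$, so one must fall back on the stronger construction-specific lower bounds for each such small order, and on the explicit value of the threshold $M$. A fully self-contained treatment would have to reprove $N(24)\geq 7$, $N(40)\geq 7$, and so on; here it is far cleaner to invoke the MOLS table directly.
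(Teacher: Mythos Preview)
Your proposal is correct, but it does considerably more than the paper does. The paper's entire proof is the single line ``See \cite[\S III.3.6]{CD}'': the lemma is treated as a known fact read directly from the MOLS tables in the \emph{Handbook of Combinatorial Designs}, with no attempt at an independent argument.

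Your route is more informative: you use MacNeish's bound together with $N(2^a)=2^a-1\geq 7$ to dispose of every $m\equiv 0\bmod 8$ whose odd part has all prime-power factors $\geq 9$, and only then fall back on the handbook table for the finitely many $m$ carrying a single factor of $3$, $5$, or $7$. This is a genuine reduction and explains \emph{why} the result is easy for most $m$, whereas the paper simply outsources the whole statement. One small overstatement to tidy: the set $\{m:N(m)<7\}$ is finite by Chowla--Erd\H{o}s--Straus type bounds, but it is not ``tabulated'' in the sense of being completely determined; what the handbook records are lower bounds on $N(m)$, and what you actually need (and what the table does provide) is that every $m\equiv 0\bmod 8$ below your threshold has a recorded lower bound $\geq 7$. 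With that phrasing adjusted, your argument and the paper's both rest on the same reference, but yours carries more explanatory content.
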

\begin{proof}
See \cite[\S III.3.6]{CD}.
\end{proof}

We now prove an easy preliminary bound using Corollary \ref{4.cor}.

\begin{Theorem}
\label{gen.bound}
For all $v \equiv 4 \bmod 12$, $v \geq 1924$, there is a nested $(v,4,1)$-BIBD.
\end{Theorem}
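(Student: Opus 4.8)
The plan is to use Corollary \ref{4.cor} with $m \equiv 0 \bmod 8$, where Lemma \ref{TD9.lem} guarantees the required TD$(9,m)$ for free. For a fixed value of $m$ with $m \equiv 0 \bmod 8$, Corollary \ref{4.cor} produces a nested $(3(8m+t)+1,4,1)$-BIBD for every $t$ with $0 \le t \le m$ for which a nested $(3t+1,4,1)$-BIBD is known; since $m \equiv 0 \bmod 4$, the corollary's extra hypothesis (a nested $(3m+1,4,1)$-BIBD) is automatically supplied by Theorem \ref{1mod12.thm}. So the values of $v$ covered are $v = 3(8m+t)+1 = 24m + 3t + 1$. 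Writing $v \equiv 4 \bmod 12$ means $v = 12s+4$, i.e.\ $3t+1 \equiv 4 \bmod 12$ forces $t \equiv 1 \bmod 4$; conversely, for each $t \equiv 1 \bmod 4$ with $0 \le t \le m$ we need a nested $(3t+1,4,1)$-BIBD, and $3t+1 \equiv 4 \bmod 12$ with $3t+1$ in the range $4$ up to $3m+1$. Since Theorem \ref{1mod12.thm} handles $v \equiv 1 \bmod 12$ already, and we are inductively assuming (or have from Lemma \ref{mod96.lem}) all small nested $(3t+1,4,1)$-BIBDs with $3t+1 \equiv 4 \bmod 12$ below some threshold, the admissible $t$'s form an arithmetic progression, and as $m$ ranges over multiples of $8$, the intervals $[24m+4,\, 24m + 3m + 1] = [24m+4, 27m+1]$ of reachable $v \equiv 4 \bmod 12$ values overlap once $m$ is large enough.

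First I would make the overlap precise. Consecutive usable moduli are $m$ and $m+8$; the interval for $m$ is $[24m+4, 27m+1]$ and for $m+8$ is $[24(m+8)+4, 27(m+8)+1] = [24m+196, 27m+217]$. These overlap provided $24m + 196 \le 27m + 1$, i.e.\ $3m \ge 195$, i.e.\ $m \ge 65$, so $m \ge 72$ (the next multiple of $8$). Thus for $m \ge 72$ the union $\bigcup_{m \equiv 0 (8),\, m \ge 72} [24m+4, 27m+1]$ is a single interval $[24 \cdot 72 + 4, \infty) = [1732, \infty)$ of all integers $\equiv 4 \bmod 12$ in that range — \emph{provided} that at each stage the required ingredient nested $(3t+1,4,1)$-BIBDs with $t \equiv 1 \bmod 4$, $1 \le t \le m$, are available. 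For $m = 72$ this means nested $(v,4,1)$-BIBDs for $v = 3t+1 \equiv 4 \bmod 12$ with $v \le 3 \cdot 72 + 1 = 217$, and Lemma \ref{mod96.lem} supplies exactly these (it gives $v \equiv 4 \bmod 12$, $16 \le v \le 196$; I would check $217$ is not actually needed, or that the largest needed value $\le 196$ — in fact $t \le 72$ gives $3t+1 \le 217$, but $t \equiv 1 \bmod 4$ and $t \le 72$ gives $t \le 69$, so $3t+1 \le 208$, and the largest such $\equiv 4 \bmod 12$ is $196$, which is covered). For larger $m$, the needed ingredients $3t+1 \le 3m+1$ are all below $24m+4$, hence already handled by induction on $v$.

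The induction is then straightforward: assume all nested $(v',4,1)$-BIBDs with $v' \equiv 4 \bmod 12$, $16 \le v' < v$ exist (base case $v \le 196$ from Lemma \ref{mod96.lem}); for $v \ge 1732$ pick the unique $m \equiv 0 \bmod 8$ with $24m+4 \le v \le 27m+1$ (possible since consecutive intervals overlap for $m \ge 72$), set $t = (v - 24m - 1)/3$, check $0 \le t \le m$ and $t \equiv 1 \bmod 4$, invoke the induction hypothesis for the nested $(3t+1,4,1)$-BIBD, and apply Corollary \ref{4.cor}. The main obstacle — and the one place I would be careful — is the bookkeeping at the boundary: confirming that the overlap threshold computation ($m \ge 72$) together with the range of Lemma \ref{mod96.lem} leaves no gap, and that the stated bound $v \ge 1924$ in the theorem (rather than my $1732$) is simply a safe, possibly non-optimal, choice the authors make to absorb any such boundary slack; I would recompute the exact first-covered value and either match $1924$ or note it is conservative. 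Everything else is routine arithmetic with the congruences $v \equiv 4 \bmod 12$, $m \equiv 0 \bmod 8$, $t \equiv 1 \bmod 4$.
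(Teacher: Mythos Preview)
Your underlying strategy---apply Corollary \ref{4.cor} with $m\equiv 0\bmod 8$ so that Lemma \ref{TD9.lem} supplies the TD$(9,m)$ automatically, and draw the nested $(3t+1,4,1)$-BIBD ingredients from Lemma \ref{mod96.lem}---is exactly the paper's. But the paper carries it out directly, with no induction and no interval-overlap analysis: write $v=192w+u$ with $u\equiv 4\bmod 12$, $16\le u\le 196$ (these sixteen values of $u$ cover every relevant residue modulo $192$), set $m=8w$ and $t=(u-1)/3$, and observe that $v\ge 1924$ forces $m=(v-u)/24\ge(1924-196)/24=72\ge 65\ge t$. Since $t\le 65$ always, the only ingredients ever needed are those from Lemma \ref{mod96.lem}, and the proof is two lines.

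Your induction, by contrast, has a genuine gap. You take as inductive hypothesis that nested $(v',4,1)$-BIBDs exist for \emph{all} $v'\equiv 4\bmod 12$ with $16\le v'<v$. But the theorem you are proving asserts existence only for $v\ge 1924$, and Lemma \ref{mod96.lem} only supplies $v'\le 196$; nothing in your argument establishes the hypothesis for $208\le v'<1924$. Indeed many such $v'$ (e.g.\ $v'=220$) are \emph{not} obtainable from Corollary \ref{4.cor} with $m\equiv 0\bmod 8$ at all---this is precisely why the rest of Section \ref{4mod12.sec} exists. There is also an arithmetic slip where you check the $m=72$ case: $208=3\cdot 69+1$ and $208\equiv 4\bmod 12$, so $t=69$ would require a nested $(208,4,1)$-BIBD, which Lemma \ref{mod96.lem} does not supply; the correct restriction is $t\le 65$, not $t\le 69$. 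Once you impose $t\le 65$ throughout, the induction evaporates and you are left with exactly the paper's direct argument (and the bound $1924$ is then just the smallest $v$ in the worst residue class $u=196$ for which $m\ge 65$).
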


\begin{proof}
Assume $v \equiv 4 \bmod 12$, $v \geq 1924$. Write $v = 192w + u$, where $u \equiv 4 \bmod 12$, $16 \leq u \leq 196$
(observe that these values of $u$ cover all the residue classes modulo $192$ that are congruent to $4$ modulo $12)$.
For each such value of $u$, there is a nested $(u,4,1)$-BIBD from Lemma \ref{mod96.lem}.
We have $v = 3(8m+t) + 1$, where $m = 8w$ and $t =  (u-1)/3$.
Hence there is a TD$(9,m)$ from Lemma \ref{TD9.lem}. 
We have 
\[m = 8w = \frac{v-u}{24} \geq \frac{1924-196}{24} = 72\] and 
\[ t \leq \frac{196-1}{3} = 65.\]
Hence $m \geq t$, so we can apply Corollary \ref{4.cor}. 
The result is a nested $(v,4,1)$-BIBD. \end{proof}

\medskip

Using Corollary \ref{4.cor}, we can also construct nested $(v, 4, 1)$-BIBDs  for many values $v \equiv 4 \mod 12$, $v < 1924$.
For a given value of $m$, we can construct nested $(v,4,1)$-BIBDs 
for an interval of values $v \equiv 4 \bmod 12$, as indicated in Table \ref{intervals.tab}. In every case, we have  
$v = 3(8m+t)+1$, as in the proof of Theorem \ref{gen.bound}. 
Any value of $m \equiv 0 \bmod 8$ can be used. 
On the other hand, when
$m \equiv 1 \bmod 4$, we need to check \cite[\S III.3.6]{CD} to see that a TD$(9,m)$ exists, and we also require a nested $(3m+1,4,1)$-BIBD.
For the values of $m$ considered in  Table \ref{intervals.tab}, existence of the required nested $(3m+1,4,1)$-BIBDs follows from Lemma \ref{mod96.lem}.

\begin{table}[tb]
\caption{Intervals of values $v\equiv 4 \bmod 12$ for which nested $(v,4,1)$-BIBDs exist}
\label{intervals.tab}
\[
\begin{array}{c|c|c}
m & t & v\\ \hline
72 & 5,9, \dots , 65 & 1744, 1756, \dots , 1924\\
65 & 5,9,\dots , 65 & 1576, 1588, \dots , 1756\\
64 & 5,9, \dots , 61 & 1552, 1564, \dots , 1720\\
61 & 5,9,\dots , 61 & 1480, 1492, \dots, 1648\\
56 & 5,9, \dots , 53 & 1360, 1372, \dots , 1504\\
53 & 5,9,\dots , 53 & 1288 , 1300, \dots , 1432\\
49 & 5,9,\dots , 49 & 1192, 1204, \dots , 1324\\
48 & 5,9, \dots , 45 & 1168, 1180, \dots , 1288\\
41 & 5,9, \dots, 41 & 1000, 1012, \dots , 1108\\
40 & 5,9, \dots , 37 & 976, 988, \dots , 1072\\
37 & 5,9,\dots , 37 & 904, 916, \dots , 1000\\
32 & 5,9, \dots , 29 & 784, 796, \dots , 856\\
29 & 5,9,\dots , 29 & 712, 724, \dots , 784\\
25 & 5,9,\dots , 25 & 616, 628, \dots , 676\\
24 & 5,9, \dots , 21 & 592, 604, \dots , 640\\
17 & 5,9,13,17 & 424, 436, 448, 460\\
16 & 5,9,13 & 400, 412, 424\\
13 & 5,9,13 & 328, 340, 352\\
9 & 5,9 & 232, 244\\
8 & 5 & 208
\end{array}
\]
\end{table}

Examining  Table \ref{intervals.tab}, we see that  there are 29 values $v \equiv 4 \bmod 12$, $v \geq 4$, yet to be constructed:

\[
\begin{array}{|r|r|r|r|r|r|r|r|}\hline
220 & 256 & 268 & 280 & 292 & 304 & 316 & 364 \\ \hline
376 & 388 & 472 & 484 & 496 & 508 & 520 & 532 \\\hline
544 & 556 & 568 & 580 &  688 & 700 & 868 & 880\\\hline
 892 & 1120 & 1132 & 1144 & 1156 &&&\\\hline
\end{array}
\]

\medskip

Here is a simple construction that can be used to eliminate several of these exceptions.

\begin{Lemma}
\label{RBIBD.lem}
 Suppose there is a resolvable $(56m+8,8,1)$-BIBD. Let $0 \leq t \leq 8m$ and suppose there exists a nested $(3t+1,4,1)$-BIBD.
Then there exists a nested $(v,4,1)$-BIBD for $v = 168m+3t+25$.
\end{Lemma}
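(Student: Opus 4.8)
The plan is to mimic the structure of Corollary~\ref{4.cor}, but using a resolvable $(56m+8,8,1)$-BIBD in place of a transversal design as the master design. First I would take the hypothesized resolvable $(56m+8,8,1)$-BIBD and adjoin to it $7m$ new ``infinite'' points, one for each parallel class; attaching a distinct new point to every block of a given parallel class turns that parallel class into a single large block. More precisely, a resolvable $(56m+8,8,1)$-BIBD has replication number $r = (56m+7)/7 = 8m+1$ parallel classes, each consisting of $(56m+8)/8 = 7m+1$ blocks of size $8$. Adjoining one new point per parallel class and completing each such class to a block through the new point produces a $(\{8,9\},1)$-GDD whose groups are: one group of size $7m$ (the new points, which form no block among themselves since any two of them lie in the ``completed'' blocks exactly once), wait --- this needs care; the cleaner route is the standard one: regard the resolvable BIBD plus infinite points as a $(\{9\},1)$-GDD of type $1^{56m+8}\,(7m)^1$? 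Let me instead use the well-known fact that a resolvable $(n,8,1)$-BIBD with $r$ parallel classes yields a $\{9\}$-GDD of type $8^{r}\,1^{?}$ ... I would state it as: it yields an $(\{8,9\},1)$-GDD of type $(56m+8)^1 (7m)^1$ is not right either, so the honest plan is: it yields a $\{9\}$-GDD of group type $1^{56m+8}$ together with one extra ``long'' block structure, i.e.\ a $(\{9\},1)$-GDD of type $8^{8m+1}$...

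Let me restate the plan at the right level of abstraction. The key object is: a resolvable $(56m+8,8,1)$-BIBD is equivalent to a $\{8,9\}$-GDD of type $(8m)^1$ together with the block set, by a standard deletion/adjunction. Concretely, adjoin $8m$ infinite points $\infty_1,\dots,\infty_{8m}$, one per parallel class, completing each parallel class to a block of size $7m+2$ --- no. The correct standard construction (``filling in parallel classes'') gives: from a resolvable $(u,8,1)$-BIBD with $r=(u-1)/7$ parallel classes, adjoining $r$ new points (one per class) and completing each class with its new point yields a $(\{8, 7m+2\},1)$-GDD, which is not of the form we want. The approach that actually matches the statement: apply weighting to the blocks of the resolvable design. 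Give every original point weight $3$ and use nested $4$-GDDs of type $3^8$ (Example~\ref{38GDD.exam}) on each block of size $8$ via Theorem~\ref{recursive.thm}-style weighting; the parallel-class structure lets one group the blocks so the ``new'' points of Theorem~\ref{recursive.thm} can be identified across a parallel class. So I would: (i) take the resolvable $(56m+8,8,1)$-BIBD; (ii) within each of the $8m+1$ parallel classes, the $7m+1$ blocks of size $8$ are replaced by nested $4$-GDDs of type $3^8$, and the ``new'' point of each GDD in a fixed parallel class is taken to be the same point, so each parallel class contributes a single new point --- giving $8m+1$ new points total. This leaves the groups (the ``fat points'') of size $3$ over each original point, for $3(56m+8)$ points, plus $8m+1$ new points; to finish, one fills in a nested $(3(56m+8)+?,4,1)$-BIBD structure... and then one uses a hole of size $3t+1$ by deleting $8m-t$ of the original points before weighting.

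So, concretely: delete $8m - t$ points from the resolvable $(56m+8,8,1)$-BIBD to create a ``hole'' of size $8m+8-(8m-t)$... I would instead delete from one group/parallel-class structure to get a GDD of type reflecting the parameter $3t+1$; after weighting by $3$, applying nested $4$-GDDs of type $3^8$ and $3^9$ to blocks, nested $(3m'+1,4,1)$-BIBDs (from Theorem~\ref{1mod12.thm} and Lemma~\ref{mod96.lem}) to groups, and a nested $(3t+1,4,1)$-BIBD to the hole, Theorem~\ref{recursive.thm}'s proof technique gives a nested BIBD on $3(56m+8) + (\text{one new point per parallel class}) + \dots$ points, and a count shows this total equals $168m + 3t + 25$. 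I would verify $168m+3t+25 = 3(56m+8) + (8m+1) - ?$; note $3(56m+8) = 168m+24$, and $168m+24 + 1 = 168m+25 = 168m + 3\cdot 0 + 25$, matching $t=0$; the $3t$ comes from replacing a would-be hole of $3\cdot 0+1$ by $3t+1$. The main obstacle, and the step I would spend the most care on, is making precise how the parallel classes of the resolvable design let the ``new'' points from the $4$-GDDs of type $3^8$ coalesce so that only $8m+1$ new points are introduced rather than one per block --- this is exactly the ``resolvability trick'' and requires checking that no pair of new points is covered twice and that pairs (new point, old-point-copy) are covered exactly once across a parallel class, i.e.\ that the nested $4$-GDD of type $3^8$ has its nested points and its ``new group'' behaving compatibly when the new group is shared. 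Once that bookkeeping is set up, the arithmetic ($t \le 8m$ guaranteeing the hole fits, and the final count $v = 168m+3t+25$) is routine.
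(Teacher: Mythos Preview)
Your proposal circles around the right ingredients but never lands on the clean intermediate step, and the ``coalescing new points'' mechanism you spend the most effort on is a misunderstanding of how Theorem~\ref{recursive.thm} operates. In that theorem there is exactly \emph{one} new point, shared by all the nested BIBDs that fill the groups; the nested $4$-GDDs of type $3^k$ that replace the blocks do not introduce any new points at all. So there is no sense in which the resolvability lets ``new'' points from the $3^8$-GDDs merge across a parallel class.

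What is actually needed is the standard way to turn a resolvable BIBD into a GDD with one extra group of controllable size. The resolvable $(56m+8,8,1)$-BIBD has $8m+1$ parallel classes $\mathcal{P}_1,\dots,\mathcal{P}_{8m+1}$. Adjoin exactly $t$ new points $\infty_1,\dots,\infty_t$, putting $\infty_i$ into every block of $\mathcal{P}_i$ for $1\le i\le t$ (so those blocks become size $9$). Now take the $7m+1$ blocks of some \emph{other} parallel class, say $\mathcal{P}_{t+1}$, to be groups, and let $\{\infty_1,\dots,\infty_t\}$ be one further group. This yields an $(\{8,9\},1)$-GDD of type $8^{7m+1}t^1$ on $56m+8+t$ points. (Two infinite points never sit in a common block, so they legitimately form a group; every mixed pair $(\text{original},\infty_i)$ is covered once via $\mathcal{P}_i$; pairs of original points are handled by resolvability.) The condition $t\le 8m$ is exactly what guarantees an unused parallel class remains to serve as groups. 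Now apply Theorem~\ref{recursive.thm} directly: block sizes $8,9$ call for nested $4$-GDDs of types $3^8$ and $3^9$, and group sizes $8,t$ call for nested $(25,4,1)$- and $(3t+1,4,1)$-BIBDs. The output has $3(56m+8+t)+1=168m+3t+25$ points, and the arithmetic you were trying to force then falls out with no ``hole'' bookkeeping at all. Your various attempts (adjoining $7m$ points, getting a $\{9\}$-GDD of type $8^{8m+1}$, deleting $8m-t$ points) are all slight misfires of this same idea; the two moves you were missing are (i) adjoin only $t$ infinite points, not one per parallel class, and (ii) sacrifice one remaining parallel class to become the groups.
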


\begin{proof}
The resolvable $(56m+8,8,1)$-BIBD has $8m+1$ parallel classes of blocks, say $\P_i$, $1 \leq i \leq 8m+1$. Add infinite points to the blocks in $t$ of the parallel classes, where $0 \leq t \leq 8m$. That is, for $1 \leq i \leq t$, adjoin a new point $\infty_i$ to every block in $\P_i$. 
The infinite points, namely $\{\infty_i : 1 \leq i \leq t\}$, comprise a group in the GDD. The $7m+1$ blocks in another parallel class, say $\P_{t+1}$, yield the remaining groups. The result is an $(\{8,9\},1)$-GDD of type $8^{7m+1}t^1$.

We have nested $(25,4,1)$-BIBDs (Table \ref{BIBDk=4.tab}) and nested $(4,1)$-GDDs of type $3^{8}$ (Example \ref{38GDD.exam}) and type $3^9$ (Theorem  \ref{3BRDF.thm} with $v=9$). Finally, a nested $(3t+1,4,1)$-BIBD exists by hypothesis. Apply Theorem \ref{recursive.thm} to obtain a nested $(3(56m+8) +3t+1,4,1)$-BIBD.
\end{proof}

We note that if we want to construct a nested $(v,4,1)$-BIBD with $v \equiv 4 \bmod 12$ using Lemma \ref{RBIBD.lem}, then we must take
$t \equiv 1 \bmod 4$, $t \geq 5$. 

\begin{Corollary}
There exist nested $(v,4,1)$-BIBDs for 
\[v \in \{376, 388,544, 556, 568, 580, 880,892,1120, 1132, 1144, 1156\}.\]
\end{Corollary}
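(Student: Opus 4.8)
The plan is to apply Lemma \ref{RBIBD.lem} to each of the twelve target values by choosing suitable parameters $m$ and $t$ with $t \equiv 1 \bmod 4$, $t \geq 5$, subject to the existence of a resolvable $(56m+8,8,1)$-BIBD and of a nested $(3t+1,4,1)$-BIBD. Recall that the lemma produces a nested $(v,4,1)$-BIBD with $v = 168m+3t+25$. So for each target $v$ I would first solve $168m + 3t + 25 = v$ for nonnegative integers $m$ and $t$ with $0 \le t \le 8m$ and $t \equiv 1 \bmod 4$; note that $v \equiv 4 \bmod 12$ forces $3t \equiv v - 25 \equiv 3 \bmod 12$, i.e.\ $t \equiv 1 \bmod 4$, so the congruence constraint is automatically compatible. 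For instance $v = 376$ gives $168m + 3t = 351$, so $m = 2$, $3t = 15$, $t = 5$; $v = 1120$ gives $168m + 3t = 1095$, solvable with $m = 6$, $t = 29$; and similarly each of $388, 544, 556, 568, 580, 880, 892, 1132, 1144, 1156$ yields a valid $(m,t)$ pair.

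Next I would verify the two existence hypotheses of Lemma \ref{RBIBD.lem} for each chosen pair. For the resolvable $(56m+8,8,1)$-BIBD: resolvable $(v,8,1)$-BIBDs (equivalently, Kirkman systems with block size $8$) exist for $v \equiv 8 \bmod 56$ with finitely many exceptions, and the relevant small orders $56m+8$ (e.g.\ $m=2$ gives $120$, $m=3$ gives $176$, $m=6$ gives $344$, etc.) can be checked against the table of resolvable BIBDs in the \emph{Handbook of Combinatorial Designs} \cite[\S II.7]{CD}. For the nested $(3t+1,4,1)$-BIBD: the values of $t$ that arise are small (e.g.\ $t = 5, 9, 13, 29, 33, \dots$), and $3t+1 \equiv 1 \bmod 12$ in each case, so existence of the required nested $(3t+1,4,1)$-BIBD follows immediately from Theorem \ref{1mod12.thm}. (Indeed, since $t \equiv 1 \bmod 4$, we get $3t+1 \equiv 4 \bmod 12$ when $t \equiv 1 \bmod 4$ — let me re-examine: $t = 5 \Rightarrow 3t+1 = 16 \equiv 4 \bmod 12$, so these are the $4 \bmod 12$ cases, handled by Lemma \ref{mod96.lem} for the small values, or recursively for larger ones.)

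The bookkeeping, not any single deep step, is the main obstacle: one must tabulate the twelve triples $(v, m, t)$, confirm $0 \le t \le 8m$ holds in each (this is the binding inequality, and for the smallest value $v = 376$ with $m = 2, t = 5$ it reads $5 \le 16$, comfortably true, while larger $v$ give larger $m$), and then cite the appropriate existence results for the resolvable $8$-BIBDs and the nested $4$-BIBD ingredients. The cleanest presentation is a short table with columns $v$, $m$, $t$, $3t+1$, and a reference for the resolvable $(56m+8,8,1)$-BIBD, followed by one sentence invoking Lemma \ref{RBIBD.lem} and Theorem \ref{1mod12.thm} (or Lemma \ref{mod96.lem}) uniformly. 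I would double-check the arithmetic $168m + 3t + 25 = v$ for each row and confirm that the resolvable designs of the needed orders are not among the known exceptions before declaring the corollary proved.
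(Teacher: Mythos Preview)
Your approach is essentially identical to the paper's: apply Lemma~\ref{RBIBD.lem} with the same choices $(m,t) = (2,5),(2,9),(3,5),(3,9),(3,13),(3,17),(5,5),(5,9),(6,29),(6,33),(6,37),(6,41)$, citing \cite[\S II.7.4]{CD} for the resolvable $(56m+8,8,1)$-BIBDs. Your mid-paragraph self-correction is exactly right---since $t\equiv 1\bmod 4$ the ingredients $3t+1$ lie in $\{16,28,40,52,88,100,112,124\}\subset[16,196]$ and are all $\equiv 4\bmod 12$, so the correct reference is Lemma~\ref{mod96.lem}, not Theorem~\ref{1mod12.thm}; with that fix (and the arithmetic checked for each of the twelve rows) your plan is complete.
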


\begin{proof}
In this proof, existence of the required resolvable $(56m+8,8,1)$-BIBDs follows from \cite[\S II.7.4, Table 7.41]{CD}.

First, we apply Lemma \ref{RBIBD.lem} with $m = 2$, $t = 5, 9$ to obtain  nested $(376,4,1)$- and $(388,4,1)$-BIBDs. Here we require a resolvable $(120,8,1)$-BIBD and
nested $(16,4,1)$- and $(28,4,1)$-BIBDs as ingredients.

For $v = 544, 556, 568$ and $ 580$, let $m = 3$ and $t = 5,9,13,17$. Here we require a resolvable $(176,8,1)$-BIBD and
nested $(16,4,1)$-, $(28,4,1)$-, $(40,4,1)$- and $(52,4,1)$-BIBDs as ingredients.

For $v = 880, 892$, let $m = 5$ and $t = 5,9$. Here we require a resolvable $(288,8,1)$-BIBD and
nested $(16,4,1)$- and $(28,4,1)$-BIBDs as ingredients.

For $v = 1120, 1132, 1144$ and $1156$, let $m = 6$ and $t = 29, 33, 37, 41$. Here we require a resolvable $(344,8,1)$-BIBD and
nested $(88,4,1)$-, $(100,4,1)$-, $(112,4,1)$- and $(124,4,1)$-BIBD as ingredients.
\end{proof}

We now present a  construction that makes use of difference matrices.
Given an additive 
group $G$ of order $g$ and a positive integer $k$, a 
\emph{$(G,k,1)$ difference matrix} (\emph{DM}, for short) is a 
$k\times g$ matrix $M$ with entries from $G$ such that the difference of any two distinct rows of $M$ is a 
permutation of the elements of $G$. A difference matrix $M$ is \emph{homogeneous}  (\emph{HDM}, for short) if
every row of $M$ is a permutation of $G$.


Actually, for our purposes, we only need homogeneous difference matrices in groups of prime power order, which are easy to construct. Specifically, 
the multiplication table of $\eff_q$ is an $(\eff_q,q,1)$-DM for any prime power $q$. 
Such a difference matrix has a row of $0$'s. Suppose $1 \leq k \leq q-1$. If we delete the row of $0$'s and $q-k - 1$ additional rows,  we get an $(\eff_q,k,1)$-HDM. 

Here is a useful ``product construction'' for BRDF that uses homogeneous difference matrices.

\begin{Lemma}
\label{DF1.lem}
If there exist a  $(G_1,H,k,1)$-BRDF and a $(G_2,k,1)$-HDM, then there exists a  $(G_1\times G_2,H\times G_2,k,1)$-BRDF.
\end{Lemma}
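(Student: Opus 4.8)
The plan is to take the standard ``product of a difference family with a difference matrix'' argument and verify that it respects the two extra ``Banff'' properties. Let $\mathcal{F} = \{B_1, \dots, B_n\}$ be a $(G_1,H,k,1)$-BRDF, where each $B_i = \{b_{i,1}, \dots, b_{i,k}\}$, and let $M = (m_{j,\ell})$ be a $(G_2,k,1)$-HDM with $k$ rows indexed by $1,\dots,k$ and $g_2 = |G_2|$ columns. For each base block $B_i$ and each column $\ell$ of $M$, define the $k$-subset of $G_1 \times G_2$
\[
B_{i,\ell} = \{ (b_{i,1}, m_{1,\ell}), (b_{i,2}, m_{2,\ell}), \dots, (b_{i,k}, m_{k,\ell}) \},
\]
and let $\mathcal{F}'$ be the collection of all $n g_2$ such blocks. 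First I would check property (1): a difference of two elements of $B_{i,\ell}$ has the form $(b_{i,p} - b_{i,q}, m_{p,\ell} - m_{q,\ell})$ with $p \neq q$. Fixing the pair $(p,q)$ and letting $\ell$ range over all columns, the first coordinate is constant (equal to the fixed difference $b_{i,p}-b_{i,q}$ from $B_i$) while the second coordinate runs over all of $G_2$ exactly once, by the defining property of the difference matrix. Hence each difference $(\delta, \gamma)$ of $G_1 \times G_2$ with $\delta \in G_1 \setminus H$ is covered exactly as many times as $\delta$ is covered by $\mathcal{F}$, namely once; and differences with first coordinate in $H$ never arise since $\mathcal{F}$ has no differences in $H$. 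So $\mathcal{F}'$ is a $(G_1 \times G_2, H \times G_2, k, 1)$-RDF.

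Next I would verify the two Banff properties, which is where the homogeneity of $M$ is used. For property (2): a point $(b_{i,p}, m_{p,\ell})$ lies in $H \times G_2$ only if $b_{i,p} \in H$; but property (2) of the BRDF $\mathcal{F}$ guarantees $b_{i,p} \notin H$, so $B_{i,\ell}$ is disjoint from $H \times G_2$. For property (3), I must show that the blocks $B_{i,\ell}$ and their negatives $-B_{i,\ell}$ are pairwise disjoint; equivalently, that $\mathcal{F}' \cup (-\mathcal{F}')$, taken as a union of point sets, has no repeated element, i.e.\ no element of $G_1 \times G_2$ appears in two different blocks of $\mathcal{F}' \cup (-\mathcal{F}')$. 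Suppose $(a, c)$ lies in two such blocks. Looking at the first coordinate $a$: since $\mathcal{F}$ is a BRDF, the multiset of first coordinates appearing across $B_i \cup (-B_i)$, as $i$ ranges, is just the \emph{set} $\bigcup_i (B_i \cup (-B_i))$, with each point of $G_1$ occurring in at most one $B_i$ (possibly, for a given $a$, as $b_{i,p}$ in exactly one block $B_i$, and as $-b_{j,q}$ in exactly one block $-B_j$, where $B_i = B_j$ by property (3) of $\mathcal{F}$). So $a$ pins down a unique \emph{row index} $p$ and a unique base block $B_i$ (up to the sign ambiguity $b_{i,p}$ vs.\ $-b_{i,p}$). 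Within that fixed row $p$ and fixed $i$, the second coordinate ranges over $\{m_{p,\ell} : \ell\} = G_2$ (in $B_{i,\ell}$) and over $\{-m_{p,\ell} : \ell\} = G_2$ (in $-B_{i,\ell}$) — both of size $g_2$ with no repetition \emph{because $M$ is homogeneous, so row $p$ is a permutation of $G_2$}. The only remaining possibility for a collision is that $(a,c)$ appears both as $b_{i,p} + $ something positive and as $-b_{j,q} + $ something negative; but then $a = b_{i,p} = -b_{j,q}$ forces a point to lie in both a base block and (the negative of) a base block of $\mathcal{F}$, contradicting property (3) of the original BRDF. Thus $\mathcal{F}'$ satisfies properties (2) and (3), completing the proof.

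The main obstacle — really the only subtle point — is the bookkeeping in property (3): one must be careful that homogeneity of the difference matrix (not merely the difference-matrix property) is what prevents two blocks $B_{i,\ell}$ and $B_{i,\ell'}$ (or a block and a negative) from sharing a point in the same row, and that property (3) of $\mathcal{F}$ itself handles cross-block and block-vs-negative collisions in the first coordinate. Everything else is a routine unwinding of definitions. I would present the argument by first disposing of property (1) with the difference-matrix computation above, then handling (2) in one line, then arguing (3) by the ``first coordinate determines the row and the block, homogeneity handles the second coordinate'' dichotomy.
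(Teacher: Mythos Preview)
Your proposal is correct and follows essentially the same route as the paper: the identical product construction $B_{i,\ell}$, the same difference-matrix computation for the RDF property, and the same use of homogeneity of $M$ together with property~(3) of $\mathcal{F}$ to verify the Banff conditions. One minor cleanup: your parenthetical suggesting $a$ might appear as both $b_{i,p}$ and $-b_{j,q}$ with ``$B_i=B_j$'' is unnecessary and slightly misleading---property~(3) of $\mathcal{F}$ already rules out $a\in B_i\cap(-B_j)$ for \emph{all} $i,j$, as you yourself correctly invoke a few lines later, so the first coordinate alone pins down a unique block (not merely up to sign) in $\mathcal{F}\cup(-\mathcal{F})$.
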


\begin{proof}
For each block $B=\{b_1,b_2,\dots, b_k\}$ of a $(G_1,H,k,1)$-BRDF, say ${\cal F}$, and for each column
$(m_{1c},\dots,m_{kc})^T$ of a $(G_2,k,1)$-HDM, say $M=(m_{rc})$,
let $$B\circ M^c=\{(b_1,m_{1c}),(b_2,m_{2c}),\dots,(b_k,m_{kc})\}.$$
Then define $${\mathcal F}\circ M:=\{B\circ M^c \ : \ B\in{\cal F}, 1\leq c\leq |G_2|\}.$$

First, it is easy to check that
$\mathcal{F}\circ M$ is a $(G_1\times G_2,H\times G_2,k,1)$-RDF. 
Consider a difference $(g_1,g_2)$ where $g_1 \in G_1 \setminus H$ and $g_2 \in G_2$. There is a unique block $B \in \mathcal{F}$ such that there exists
$b_i, b_j \in B$ with $b_i - b_j = g_1$. There is also a unique column $c$ of $M$ such that $m_{ic} - m_{jc} = g_2$, because $M$ is a difference matrix. Hence, 
$\{ (b_i,m_{ic}) ,(b_j,m_{jc})\}$ is the unique pair in a block of $\mathcal{F}\circ M$ that yields the difference $(g_1,g_2)$.
It is also clear that no differences $(h,g_2)$ occur with $h \in H$, because no differences in $H$ occur in the blocks in $B \in \mathcal{F}$.
%
%

We now consider the Banff property. We need to show that the base blocks in $\mathcal{F}\circ M$ are all disjoint from $H\times G_2$, and
the base blocks and their negatives are pairwise disjoint. First, we prove that there are no points $(g_1,g_2)$ in a block of  ${\mathcal F}\circ M$ if $g_1 \in H$.
Suppose $(g_1,g_2) \in B\circ M^c$, where $g_1 \in H$ and $B \in \mathcal{F}$. Then $g_1 \in B$, which is impossible because $\mathcal{F}$ is
a $(G_1,H,k,1)$-BRDF.

Finally, denote $-(\mathcal{F}\circ M) = \{ -(B\circ M^c) : \ B\in{\cal F}, 1\leq c\leq |G_2|\}.$
We show that a point $(g_1,g_2)$ is in at most one block in $(\mathcal{F}\circ M) \cup -(\mathcal{F}\circ M)$, where 
$g_1 \in G_1 \setminus H$ and $g_2 \in G_2$.  The point $g_1$ is in at most one block of  $\mathcal{F} \cup -\mathcal{F}$,
where $-\mathcal{F} = \{ -B : B \in \mathcal{F}\}$, since $\mathcal{F}$ is
a $(G_1,H,k,1)$-BRDF. Assume $g_1 \in B \in \mathcal{F}$ (the argument  is similar if $g_1 \in -B \in -\mathcal{F}$. 
We have $g_1 = b_i \in B$ for some $i$.
There is a unique column $c$ of $M$ such that $m_{ic} = g_2$, because the difference matrix $M$ is homogeneous. So $(g_1,g_2) \in B \circ M^c$ and $(g_1,g_2)$ occurs in no other block of $(\mathcal{F}\circ M) \cup -(\mathcal{F}\circ M)$.
\end{proof}

The next lemma is just a standard ``filling in holes'' construction.

\begin{Lemma}
\label{DF.lem}
If there exists a $(G_1,H,k,1)$-BRDF with $|G_1| = v$ and $|H| = h$, and a nested $(h,k,1)$-BIBD, 
then there exists a nested $(v,k,1)$-BIBD.
\end{Lemma}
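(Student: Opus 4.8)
The plan is to combine the BRDF with the nested BIBD on a single group $H$ by a filling-in-holes argument. First I would observe that, by Property (1) of the BRDF and the Banff properties (2) and (3), developing the base blocks of the $(G_1,H,k,1)$-BRDF through $G_1$, after nesting each base block with $0$, produces a nested $(k,1)$-GDD of type $h^{v/h}$ whose groups are $H$ and its cosets; this is exactly the first bullet in the summary preceding the lemma statement (stated there for $h=4$, $k=4$, but the argument is general). So from the first hypothesis we obtain a nested GDD of type $h^{v/h}$ on point set $G_1$, in which the blocks meet each group in at most one point and no block uses a within-group pair.

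Next I would take the nested $(h,k,1)$-BIBD and place a copy of it on each group $G_i$ of this GDD; because each copy uses only within-group pairs of points (together with a nested point, also within the group), these copies are pairwise disjoint in their pair sets and disjoint from the pair set of the GDD blocks. The union of all the GDD blocks (augmented with their nested points) and all the blocks of the $v/h$ copies of the nested $(h,k,1)$-BIBD (augmented with their nested points) is then a candidate nested $(v,k,1)$-BIBD on $G_1$: every pair of points of $G_1$ is covered, in the underlying design, exactly once — within-group pairs by the local BIBD copies, cross-group pairs by the GDD — so we do get a $(v,k,1)$-BIBD, and each block has been assigned a nested point.

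The remaining point is to check that the augmented design is a partial $(v,k+1,\lambda+1)$-BIBD, i.e.\ that no pair of points occurs more than $\lambda+1=2$ times among the augmented blocks. Within a single group this is guaranteed because we started from a \emph{nested} $(h,k,1)$-BIBD. For the GDD blocks, the nesting property of the GDD says the augmented blocks form a \emph{partial} $(k+1,2)$-GDD of type $h^{v/h}$, so any cross-group pair occurs at most twice and no within-group pair occurs at all among the augmented GDD blocks. The only thing to verify is that a within-group pair $\{x,y\}\subseteq G_i$ created by nesting a GDD block (that is, $x\in A$ and $y=\phi(A)\in G_i$ for some GDD block $A$) does not collide with the at-most-two occurrences of $\{x,y\}$ already coming from the local nested BIBD on $G_i$. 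I expect this to be the main (and only real) obstacle, and I would handle it by arranging that the nested point $\phi(A)$ of every GDD block $A$ lies in a group disjoint from $A$ — equivalently, by choosing the nested points of the BRDF base blocks to lie in $H$ only if forced, and observing that since each augmented GDD block $A\cup\{\phi(A)\}$ meets each group in at most one point (it is a block of a partial GDD of type $h^{v/h}$!), the point $\phi(A)$ lies in a group containing no other point of $A\cup\{\phi(A)\}$, so it creates no new within-group pair at all. Hence within-group pairs come only from the local nested BIBDs and occur at most twice, completing the verification.
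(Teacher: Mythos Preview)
Your proposal is correct and follows exactly the paper's approach: build a nested $(k,1)$-GDD of type $h^{v/h}$ from the BRDF, then fill each group with a copy of the nested $(h,k,1)$-BIBD. Your extra care in verifying the nesting of the combined design is a welcome elaboration of what the paper leaves implicit; note, though, that the phrase ``I would handle it by arranging that $\phi(A)$ lies in a group disjoint from $A$'' is unnecessary hedging---as you yourself point out immediately afterward, this is automatic, since by definition the augmented blocks of a nested GDD form a partial GDD and hence contain no within-group pair.
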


\begin{proof}
The $(G_1,H,k,1)$-BRDF generates a nested $(k,1)$-GDD of type $h^{v/h}$. Fill in each group with a nested $(h,k,1)$-BIBD.
\end{proof}

The two preceding lemmas can be used to construct  nested $(v,4,1)$-BIBDs for several values of $v$, as summarized in Table \ref{DF.tab}.
Note that the required BRDFs were shown to exist in Examples \ref{4041.exam}--\ref{12441.exam}.

\begin{table}
\caption{Applications of Lemma \ref{DF1.lem} and \ref{DF.lem}}
\label{DF.tab}
$$\begin{array}{|c|c|} \hline
$v$ &  {\rm Ingredients}\\ \hline
280 & $(40,4,4,1)-${\rm BRDF}  \ + \ $(7,4,1)-${\rm HDM} \ + \ {\rm nested} \ $(28,4,1)-${\rm BIBD} \\ \hline
364 & $(52,4,4,1)-${\rm BRDF}  \ + \ $(7,4,1)-${\rm HDM} \ + \ {\rm nested} \ $(28,4,1)-${\rm BIBD} \\ \hline
520 & $(40,4,4,1)-${\rm BRDF}  \ + \ $(13,4,1)-${\rm HDM} + \ {\rm nested} \ $(52,4,1)-${\rm BIBD} \\ \hline
532 & $(76,4,4,1)-${\rm BRDF}  \ + \ $(7,4,1)-${\rm HDM} + \ {\rm nested} \ $(28,4,1)-${\rm BIBD} \\ \hline
700 & $(100,4,4,1)-${\rm BRDF}  \ + \ $(7,4,1)-${\rm HDM} + \ {\rm nested} \ $(28,4,1)-${\rm BIBD} \\ \hline
868 & $(124,4,4,1)-${\rm BRDF}  \ + \ $(7,4,1)-${\rm HDM} + \ {\rm nested} \ $(28,4,1)-${\rm BIBD} \\ \hline
\end{array}$$
\end{table}

\bigskip

Our last theoretical construction yields nested $(v,4,1)$-BIBDs for $v = 4q$ where $q \equiv 1 \bmod 12$ is a prime or prime power, provided that certain conditions are satisfied. The next lemma is based on ideas that were introduced in the discussion preceding Example \ref{4041.exam}.

\begin{Lemma}\label{SBRDF}
Let ${\mathcal F}$ be a $(G,H,k,1)$-BRDF with $G$ a group of order $v$ and $H$ a subgroup of $G$ of order $k$.
If there exists a non-trivial coset of $H$ having empty intersection with all the base blocks of ${\mathcal F}$ and with all their negatives, 
then there exists a nested $(v,k,1)$-BIBD.
\end{Lemma}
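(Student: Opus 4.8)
The plan is to exploit the extra structure that a BRDF $\mathcal F$ already generates: namely, a nested $(k,1)$-GDD of type $k^{v/k}$ in which the groups are $H$ and its cosets, obtained by nesting each base block with $0$ and developing through $G$. This GDD is short one ``group-worth'' of structure compared with a BIBD — to get a nested $(v,k,1)$-BIBD we must fill in the interactions \emph{within} the groups as well as supply nested points for the $v/k$ translates of $H$. The hypothesis that some non-trivial coset $C = g_0 + H$ avoids all base blocks and all their negatives is exactly what lets us recycle that coset as a supply of nested points for the short orbit $H$.

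Concretely, I would argue as follows. Since $H$ is a subgroup of order $k$, the pairs within $H$ and its cosets are precisely the differences lying in $H$, and these are not covered by $\mathcal F$ (property (1) of an RDF). So we must add, for each coset $D$ of $H$, a set of blocks covering all pairs inside $D$ exactly once; equivalently, we develop the subgroup $H$ (viewed as a single block of size $k$) and its translates, taking one translate per coset, i.e.\ the $v/k$ translates $H, H+1, \dots$ that partition $G$ into cosets — but since developing $H$ by all of $G$ gives each coset $k$ times, we take only $v/k$ suitably chosen translates, one hitting each coset. For the nesting we need a nested point for each such translate $H + g$. I would pick the nested point for $H + g$ to be the corresponding element $g_0 + g$ of the translated coset $C + g = g_0 + H + g$, so that as $g$ ranges over a transversal of $H$ in $G$, the multiset of nested points used for the short orbit is exactly the coset $C = g_0 + H$ (each element once). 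One then checks: (a) the augmented blocks $H + g \cup \{g_0 + g\}$ have all distinct elements, since $g_0 \notin H$; (b) the ``new'' pairs $\{h + g, g_0 + g\}$ they create, as $h$ ranges over $H$, give differences $\pm(h - g_0)$, which as $h$ ranges over $H$ and with the translate varying cover each difference in $\pm(H - g_0)$ exactly once — and these differences lie outside $H$ (again since $g_0 \notin H$), so they do not clash with the within-group pairs; (c) crucially, these differences and these new points do not clash with anything coming from $\mathcal F$: the elements $g_0 + g$ all lie in the coset $C$, which by hypothesis is disjoint from every base block of $\mathcal F$ and every negative, so in the developed design the augmented short-orbit blocks never reuse a point-in-a-block together with a base-block point in a way that would double-cover a pair. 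I would phrase the verification in terms of the ``suitable nested point'' criterion from the paragraph before Example~\ref{4041.exam}: an element $x$ is suitable for $H$ precisely when $\pm(H - x)$ is disjoint from the union of the base blocks of $\mathcal F$ and their negatives, and here every $x \in C$ satisfies $H - x = -g_0 + H' $ lies entirely in... in fact $\pm(H-x)$ is a union of cosets' worth of differences disjoint from $H$, and the hypothesis on $C$ is exactly the disjointness needed, so \emph{every} element of $C$ is simultaneously suitable and they collectively form the exact set of nested points required for the $v/k$ short-orbit translates.

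Assembling: take the nested $(k,1)$-GDD of type $k^{v/k}$ from $\mathcal F$ (Banff properties (2),(3) guarantee it nests), then adjoin for each coset $D$ the single translate of $H$ mapping onto $D$, each augmented with the matching element of $C$. Counting shows every pair is now covered exactly once (GDD pairs from $\mathcal F$, within-coset pairs from the $H$-translates, and the index goes from $1$ to $2$ on the new nested-point pairs without exceeding $2$ anywhere because of the disjointness of $C$), and every pair involving a nested point occurs at most twice, so the augmented design is a genuine $(v,k+1,2)$ partial BIBD in the nesting sense — in fact a nested $(v,k,1)$-BIBD. I would then remark that this is essentially the same mechanism as in Examples~\ref{5241.exam}--\ref{12441.exam}, where the ``short orbit'' $\{0, v/4, v/2, 3v/4\}$ is nested using a point like $3$ drawn from an untouched coset.

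The main obstacle I anticipate is bookkeeping the coset/translate combinatorics cleanly: one must be careful that developing $\mathcal F$ through $G$ together with the chosen $v/k$ translates of $H$ really does cover \emph{every} pair of $G$ exactly once (the within-group pairs are covered once by the $H$-translates and never by $\mathcal F$; the cross-group pairs once by $\mathcal F$ and never by the $H$-translates), and that the nested points, which form the coset $C$, do not themselves create a repeated pair among the augmented $H$-translates — this needs $C$ to be a coset of $H$ of order exactly $k = |H|$, matching the $v/k$ translates one-to-one, which is why the statement insists $|H| = k$. Once the indexing is fixed, properties (2) and (3) of the BRDF plus the coset-disjointness hypothesis do all the real work, so the proof should be short.
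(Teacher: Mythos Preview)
Your construction is the same as the paper's: form the $(v,k,1)$-BIBD as $dev\mathcal{F}$ together with the set $\mathcal{H}$ of cosets of $H$, nest each $B+t\in dev\mathcal{F}$ with $t$, and nest each coset $H+t$ (for $t$ in a chosen transversal) with $g_0+t$. The paper records this as a single nesting map $f$ and simply asserts that the verification is straightforward.

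There is, however, a recurring slip in your write-up that undermines part (c) of your verification. You claim that as $t$ runs over a transversal $T$, the nested points $\{g_0+t : t\in T\}$ form the coset $C=g_0+H$; you lean on this in (c) (``the elements $g_0+g$ all lie in the coset $C$'') and again at the end (``matching the $v/k$ translates one-to-one, which is why the statement insists $|H|=k$''). This is false: $g_0+T$ is just another transversal of $H$, with $v/k$ elements, not the $k$-element coset $C$; the two cardinalities agree only when $v=k^2$. (The reason the lemma takes $|H|=k$ is simply so that the cosets are themselves blocks of the correct size.) What actually makes the nesting work is not where the nested points sit but which \emph{differences} the coset-nesting introduces: the new pairs $\{h+t,\,g_0+t\}$ have differences in $\pm(H-g_0)=C\cup(-C)$, and the hypothesis that $C$ misses every $B_i$ and every $-B_i$ is exactly what keeps these differences disjoint from the elements of the base blocks, so no pair is covered once by a $dev\mathcal{F}$-nesting and again by a coset-nesting. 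You do eventually reach this via the ``suitable nested point'' criterion, and that is the correct mechanism; drop the mistaken claim about the nested points lying in $C$ and run the argument directly on differences.
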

\begin{proof}
Let $dev{\mathcal F}$ be the development of ${\mathcal F}$ and let ${\mathcal H}$ be the set of cosets
of $H$ in $G$. The pair $(G,dev{\mathcal F} \ \cup \ {\mathcal H})$ is a $(v,k,1)$-BIBD.
By assumption, there is an element $g\in G\setminus H$ such that $H+g$ is disjoint from all the base blocks of ${\mathcal F}$
and with all their negatives.
Consider the map
$$f: dev{\mathcal F} \ \cup \ {\mathcal H} \longrightarrow G$$
defined by 
\[
\begin{array}{ll}
f(B+t) =t & \text{ for all } B\in {\mathcal F} \text{ and for all } t\in G\\
f(H+t) =g+t & \text{ for all } t\in G.
\end{array}
\]
It is straightforward to check that $f$ is a nesting and then the assertion follows.
\end{proof}

\begin{Lemma}\label{4q,4,4,1}
Let $q=12n+1$ be a prime power and let $C^6$ be the subgroup of $\eff_q^*$ of index $6$.
Assume that there exists a
$16$-tuple $(a_1,a_2,\dots,a_{16})$ of elements of $\eff_q^*$ satisfying the following conditions:
\begin{itemize}
\item[(i)] Each of the four lists of differences 
\begin{align*}\Delta_{0,0} & =\{a_1-a_2, a_5-a_6,a_{10}-a_{11},a_{10}-a_{12},a_{11}-a_{12},a_{15}-a_{16}\},\\
\Delta_{0,1}&=\{a_1-a_3,a_2-a_3,a_{5}-a_{7},a_{6}-a_{7},a_{14}-a_{15},a_{14}-a_{16}\},\\
\Delta_{1,0}&=\{a_1-a_4,a_2-a_4,a_{5}-a_{8},a_{6}-a_{8},a_{13}-a_{15},a_{13}-a_{16}\},\\
\Delta_{1,1}&=\{a_3-a_4,a_7-a_8,a_{9}-a_{10},a_{9}-a_{11},a_{9}-a_{12},a_{13}-a_{14}\}
\end{align*} is a complete  system 
of representatives for the cosets of $C^6$ in $\eff_q^*$.
\item[(ii)]
Each of the four lists 
\begin{align*}
U_{0,0}&=\{a_1,a_2,a_5,a_6,a_9\},\\
U_{0,1}&=\{a_3,a_7,a_{13}\},\\
U_{1,0}&=\{a_4,a_8,a_{14}\},\\
U_{1,1}&=\{a_{10},a_{11},a_{12},a_{15},a_{16}\}
\end{align*}
is a partial  system 
of representatives for the cosets of $C^6$ in $\eff_q^*$ distinct from $C^6$.
\end{itemize}
Then there exists a  nested $(4q,4,1)$-BIBD.
\end{Lemma}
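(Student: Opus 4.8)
The plan is to produce a $(G,H,4,1)$-BRDF in which some non-trivial coset of $H$ avoids all base blocks and all their negatives, and then to invoke Lemma~\ref{SBRDF} with $k=4$, $|G|=4q$. I would take $G=\zed_2\times\zed_2\times\eff_q$ and $H=\zed_2\times\zed_2\times\{0\}$, a subgroup of order $4$ (and, as $q$ is odd, every element of $G$ of order $2$ already lies in $H$). From the $16$-tuple I would form four seed blocks of $\zed_2\times\zed_2\times\eff_q$:
\[
\begin{array}{l}
B_1=\{(0,0,a_1),(0,0,a_2),(0,1,a_3),(1,0,a_4)\},\qquad
B_2=\{(0,0,a_5),(0,0,a_6),(0,1,a_7),(1,0,a_8)\},\\[3pt]
B_3=\{(0,0,a_9),(1,1,a_{10}),(1,1,a_{11}),(1,1,a_{12})\},\qquad
B_4=\{(0,1,a_{13}),(1,0,a_{14}),(1,1,a_{15}),(1,1,a_{16})\}.
\end{array}
\]

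A direct check --- the one genuinely computational step --- establishes two facts about $B_1\cup B_2\cup B_3\cup B_4$: for each $\varepsilon\in\zed_2\times\zed_2$, the multiset of $\eff_q$-components of the (unordered) differences whose $\zed_2\times\zed_2$-component equals $\varepsilon$ is exactly the list $\Delta_\varepsilon$; and the set of $a_i$ occurring as the $\eff_q$-component of a point carrying label $\varepsilon$ is exactly $U_\varepsilon$. Thus condition~(i) says precisely that each $\Delta_\varepsilon$ is a transversal of $C^6$ in $\eff_q^*$ --- in particular its entries are nonzero, so the $\eff_q$-components within each $B_k$ are pairwise distinct and $B_k$ meets each coset of $H$ at most once --- while condition~(ii) says precisely that the $a_i$ carrying a fixed label lie in pairwise distinct cosets of $C^6$, none of which is $C^6$ itself.

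Next I would develop multiplicatively. Since $q=12n+1$, the group $\eff_q^*$ is cyclic of order $12n$ and contains $-1$, so $-1\in C^6$; let $S$ be a complete set of representatives for the cosets of $\{1,-1\}$ in $C^6$, so $|S|=n$ and $C^6=S\sqcup(-S)$. Put $\mathcal F=\{\,s\ast B_k:\ s\in S,\ 1\le k\le 4\,\}$, where $s\ast(\varepsilon,x)=(\varepsilon,sx)$; this is a family of $4n$ base blocks, and I would verify that it is a $(G,H,4,1)$-BRDF. For the relative-difference-family property, the $\eff_q$-components of the differences of $\mathcal F$ carrying label $\varepsilon$ form $\{\,sd:\ s\in S,\ d\in\Delta_\varepsilon\cup(-\Delta_\varepsilon)\,\}=\bigcup_{d\in\Delta_\varepsilon}C^6d$, which by~(i) is $\eff_q^*$ with every element covered exactly once, and no difference falls in $H$ by the first fact above. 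Banff property~(2) is immediate since every base-block point has nonzero $\eff_q$-component. For Banff property~(3), two points of $\bigcup_{B\in\mathcal F}(B\cup -B)$ coincide only if they share a label and have $\eff_q$-components $sa_i=s'a_j$ with $s,s'\in C^6$, forcing $a_i/a_j\in C^6$; by the second fact and~(ii) this forces $i=j$, and then $s=s'$, so there is no genuine coincidence.

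Finally, condition~(ii) keeps every $a_i$ out of $C^6$, so all $\eff_q$-components occurring in the base blocks of $\mathcal F$ and in their negatives lie in $\eff_q^*\setminus C^6$; hence the non-trivial coset $H+(0,0,1)$ misses every base block and every negative of one, and Lemma~\ref{SBRDF} yields a nested $(4q,4,1)$-BIBD. The only real obstacle is the bookkeeping behind the two ``facts'': one must pin down the $\zed_2\times\zed_2$-labels on the sixteen coordinates so that the six-element difference lists split exactly as $\Delta_{0,0},\Delta_{0,1},\Delta_{1,0},\Delta_{1,1}$ and the label classes come out exactly as $U_{0,0},U_{0,1},U_{1,0},U_{1,1}$ --- which the displayed seed blocks do --- together with the small observation that developing by the transversal $S$ of $\{1,-1\}$ in $C^6$, rather than by the index-$12$ subgroup of $\eff_q^*$, is what makes the $\pm$-signs on differences and on points fold together cleanly whether or not that subgroup contains $-1$.
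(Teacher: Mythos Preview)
Your proposal is correct and follows essentially the same route as the paper: the same four seed blocks in $\zed_2\times\zed_2\times\eff_q\cong\eff_4\times\eff_q$, the same multiplicative development by a transversal $S$ of $\{1,-1\}$ in $C^6$, the same verification that condition~(i) yields the RDF property and condition~(ii) yields the Banff properties, and the same appeal to Lemma~\ref{SBRDF} via a coset $H+(0,0,x)$ with $x\in C^6$. The only cosmetic difference is that the paper phrases the disjointness check as computing $U_{\mathcal F}=\bigcup_{\varepsilon}\{\varepsilon\}\times(U_\varepsilon\otimes C^6)$ directly, whereas you argue pointwise; the content is identical.
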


\begin{proof}
Let us consider the following 4-subsets of $\eff_4\times\eff_q$: 
\begin{align*}
B_1&=\{(0,0,a_1),(0,0,a_2),(0,1,a_3),(1,0,a_4)\},\\
B_2&=\{(0,0,a_5),(0,0,a_6),(0,1,a_7),(1,0,a_8)\},\\
B_3&=\{(0,0,a_9),(1,1,a_{10}),(1,1,a_{11}),(1,1,a_{12})\},\\
B_4&=\{(0,1,a_{13}),(1,0,a_{14}),(1,1,a_{15}),(1,1,a_{16})\}.
\end{align*}
One can patiently check that we have
$$\bigcup_{i=1}^4\Delta B_i=\bigcup_{x\in\eff_4}\{x\}\times(\Delta_x\otimes\{1,-1\}).$$
The assumption that $q=12n+1$ guarantees that $-1\in C^6$, hence $\{1,-1\}$ is a subgroup of $C^6$. 

Let $S$ be a complete system of representatives for the cosets of $\{1,-1\}$ in $C^6$ (note that $|S| = (q-1)/12$).
Then, set 
\[ {\mathcal F}=\{B_{i,s} :  1\leq i\leq 4, s\in S\},\] where $B_{i,s}$ is the set obtained from $B_i$ by multiplying the last coordinates of all its elements by $s$. Note that $|{\mathcal F}| = (q-1)/3$.

We have $\{1,-1\}\otimes S=C^6$ by definition of $S$.
Also, for every $x\in\eff_4$ we have $\Delta_x\otimes C^6=\eff_q^*$ by condition $(i)$, so we can write $(\Delta_x\otimes\{1,-1\})\otimes S=\eff_q^*$. 
It follows that we have
$$\Delta{\mathcal F}=\eff_4\times\eff_q^*=(\eff_4\times\eff_q)\setminus(\eff_4\times\{0\}),$$
which means that $\mathcal F$ is a $(\eff_4\times\eff_q,\eff_4\times\{0\},4,1)$-RDF.

The union $U_{\mathcal F}$ (counting multiplicities) of the blocks of ${\mathcal F}$ and their negatives is given by
\begin{eqnarray*}
U_{\mathcal F}&=&\bigcup_{x\in\eff_4}\{x\}\times (U_x\otimes\{1,-1\}\otimes S)\\
&=&\bigcup_{x\in\eff_4}\{x\}\times (U_x\otimes C^6).
\end{eqnarray*}
In view of $(ii)$, we have that $U_x\otimes C^6$ is the union of five or three distinct cosets of $C^6$ in $\eff_q^*$ according to 
whether $x\in\{(0,0),(1,1)\}$ or $x\in\{(0,1),(1,0)\}$, respectively.
Thus $U_{\mathcal F}$ does not have repeated elements and it does not cover any element of $\eff_4\times \{0\}$, hence
$\mathcal F$ is a BRDF. 

Finally, $U_{\mathcal F}$ is  disjoint from $\eff_4\times \{x\}$ for every $x\in C^6$.
The assertion then follows from Lemma \ref{SBRDF}.
\end{proof}

In the above construction, every block in ${\mathcal F}$ is nested with $(0,0,0)$, and the short orbit is generated from 
$H = \{(0,0,0), (0,1,0), (1,0,0), (1,1,1), (0,0,x)\}$ where the nested point is any $x \in C^6$. We only take translates of 
the short orbit through $\eff_q$. 

\begin{Example}
\label{4x13.exam}
Let us apply Lemma \ref{4q,4,4,1} in the smallest case $q=13$. Here we have $C^6=\{1,-1\}$ so that the cosets
of $C^6$ in $\eff_{13}^*$ are all pairs $\{g,-g\}$ of nonzero elements of $\eff_{13}^*$:
$$\{1,12\},\quad\quad \{2,11\},\quad\quad \{3,10\},\quad\quad\{4,9\},\quad\quad  \{5,8\},\quad\quad  \{6,7\}.$$
Let us consider the following 16-tuple 
$$(a_1,a_2,\dots,a_{16})=(2, 3, 5, 7, 4, 6, 11, 3, 5, 2, 9, 6, 3, 9, 5, 10)$$
of elements of $\eff_{13}^*$. Keeping the same notation as in Lemma \ref{4q,4,4,1} we have:
\begin{align*}
\Delta_{00}&=\{12,11,6,9,3,8\},
&\Delta_{01}&=\{10,11,6,8,4,12\},\\
\Delta_{10}&=\{8,9,1,3,11,6\},
&\Delta_{11}&=\{11,8,3,9,12,7\},\\
U_{00}&=\{2,3,4,6,5\},
&U_{01}&=\{5,11,3\},\\ 
U_{10}&=\{7,3,9\},
&U_{11}&=\{2,9,6,5,10\}.
\end{align*}
It is readily seen that conditions (i) and (ii) of Lemma \ref{4q,4,4,1} are satisfied.

In this case we can take $S=\{1\}$ and then a $(\eff_4\times\eff_{13},\eff_4\times\{0\},1)$-BRDF
is given by the following four base blocks:
\begin{align*}
B_1&=\{(0,0,2),(0,0,3),(0,1,5),(1,0,7)\},\\
B_2&=\{(0,0,4),(0,0,6),(0,1,11),(1,0,3)\},\\
B_3&=\{(0,0,5),(1,1,2),(1,1,9),(1,1,6\},\\
B_4&=\{(0,1,3),(1,0,9),(1,1,5),(1,1,10)\}.
\end{align*}
These base blocks are all nested with $(0,0,0)$ and developed through $\eff_4 \times \eff_{13}$. 
We can take $x = 1 \in C^6$, and then the short orbit is $H = \{(0,0,0), (0,1,0), (1,0,0), (1,1,1), (0,0,1)\}$, where we develop the last co-ordinate through $\eff_{13}$.
\end{Example}

\medskip

We present ``good" 16-tuples for primes $q \leq 193$, $q \equiv 1 \bmod 12$, in Table \ref{good.tab}. 
We remark that alternate constructions for $v=148$ and $v = 292$ (found by backtracking) are given in Examples \ref{14841.exam} 
 and \ref{29241.exam}, resp.

\begin{table}
\caption{``Good" 16-tuples for primes $q \leq 193$, $q \equiv 1 \bmod 12$}
\label{good.tab}
\[
\begin{array}{|r|c|} \hline
p &  (a_1,a_2,\dots,a_{16})\\ \hline
13 & (2, 3, 5, 7, 4, 6, 11, 3, 5, 2, 9, 6, 3, 9, 5, 10)\\ \hline
37 & (2, 3, 4, 5, 5, 8, 2, 14, 9, 2, 4, 23, 5, 3, 16, 24)\\ \hline
61 & (2, 4, 6, 7, 5, 8, 13, 15, 10, 2, 24, 32, 8, 25, 45, 57)\\ \hline
73 & (2, 4, 6, 7, 5, 10, 15, 26, 13, 6, 15, 44, 4, 16, 36, 43)\\ \hline
97 & (2, 4, 5, 6, 5, 10, 15, 17, 19, 2, 6, 28, 2, 53, 23, 57)\\ \hline
109 & (2, 3, 5, 6, 6, 9, 15, 17, 11, 2, 21, 58, 2, 7, 89, 98)\\ \hline
157 & (2, 3, 5, 6, 5, 13, 18, 7, 15, 2, 13, 109, 2, 40, 102, 133)\\ \hline
181 & (2, 3, 4, 7, 6, 9, 2, 16, 18, 8, 10, 41, 6, 11, 20, 169)\\ \hline 
193 & (2, 4, 6, 7, 5, 10, 15, 26, 11, 4, 5, 16, 4, 38, 80, 160)\\ \hline
\end{array}
\]
\end{table}

\bigskip

Finally, we have some results for $q$ a  prime power.

\begin{Corollary}
\label{pp.thm}
There exists a $(\eff_4\times\eff_q,\eff_4\times\{0\},4,1)$-BRDF and a nested $(4q,4,1)$-BIBD for $q = 25$, $49$ and $121$.
\end{Corollary}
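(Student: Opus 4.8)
The plan is to invoke Lemma \ref{4q,4,4,1} for the prime powers $q = 25, 49, 121$ (each of which is $\equiv 1 \bmod 12$), exactly as was done for primes in Example \ref{4x13.exam} and Table \ref{good.tab}. Concretely, for each such $q$ one first fixes an explicit model of $\eff_q$ (for $q = 25$ and $q = 121$ we use $\eff_5[X]/(f)$ and $\eff_{11}[X]/(g)$ for a suitable irreducible quadratic $f$, $g$; for $q = 49$ we use $\eff_7[X]/(h)$), identifies the index-$6$ subgroup $C^6 \le \eff_q^*$ as the set of sixth powers, and then exhibits a ``good'' $16$-tuple $(a_1, \dots, a_{16})$ of nonzero field elements for which conditions (i) and (ii) of Lemma \ref{4q,4,4,1} hold. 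Such tuples are found by a short computer search (backtracking over the $a_i$, pruning as soon as one of the four difference lists $\Delta_{x}$ fails to be a transversal of the cosets of $C^6$, or one of the four lists $U_x$ acquires a coset repeat). Once a valid tuple is in hand, Lemma \ref{4q,4,4,1} produces a $(\eff_4 \times \eff_q, \eff_4 \times \{0\}, 4, 1)$-BRDF and, via Lemma \ref{SBRDF}, a nested $(4q,4,1)$-BIBD; this simultaneously yields both assertions of the corollary.

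The order of the steps is: (1) for $q = 25$, display the field model, the coset representatives of $C^6$, and a good $16$-tuple; verify (i) and (ii) by the same kind of routine check carried out in Example \ref{4x13.exam}; conclude a nested $(100,4,1)$-BIBD (matching the alternate construction promised in Example \ref{10041.exam}). (2) Repeat for $q = 49$, giving a nested $(196,4,1)$-BIBD. (3) Repeat for $q = 121$, giving a nested $(484,4,1)$-BIBD. In each case the work is purely the exhibition of the tuple plus the mechanical verification that the four $\Delta$-lists are complete coset representative systems and the four $U$-lists are repetition-free partial systems avoiding $C^6$; everything else is supplied by Lemmas \ref{4q,4,4,1} and \ref{SBRDF}.

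The only genuine obstacle is the existence of a good $16$-tuple for a prime power rather than a prime: the construction of Lemma \ref{4q,4,4,1} is purely group-theoretic in $\eff_q^*$ and does not care whether $q$ is prime, but one must actually \emph{find} the tuple, and a priori it is not obvious that the search succeeds for these three values. In practice the search space is large enough (and the constraints loose enough, since we only need partial coset systems in (ii)) that a solution is found quickly for each of $q = 25, 49, 121$; I would simply record the resulting tuples in a small table analogous to Table \ref{good.tab}, leaving the coset-membership checks to the reader exactly as in Example \ref{4x13.exam}. Thus the proof reduces to: state the three tuples (with their field models and sixth-power subgroups), note that conditions (i) and (ii) are satisfied, and apply Lemma \ref{4q,4,4,1}.
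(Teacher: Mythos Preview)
Your proposal is correct and matches the paper's own proof essentially line for line: the paper simply states that these are applications of Lemma~\ref{4q,4,4,1}, fixes an explicit model of $\eff_q$ for each of $q=25,49,121$, and records a ``good'' 16-tuple found by computer. The only cosmetic difference is that the paper lists the tuples inline rather than in a table and does not spell out the verification of conditions (i) and (ii), leaving that as the same routine check you describe.
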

\begin{proof}
These are applications of  Theorem \ref{4q,4,4,1}.
A ``good" 16-tuple in $\eff_{25} = \zed_5[x]/(x^2+x+2)$ is
\begin{multline*}(x, 1+x, 2x, 3x, 2+x, 3+2 x, 1+x, 4+4x,\\ 
3+x, 4+x, 4+3x, 2+2x, 3+x, 2+3x, x, 4+2x).
\end{multline*}
A ``good" 16-tuple in $\eff_{49}= \zed_7[x]/(x^2+1)$ is
\begin{multline*}(2x, 3x, 4x, 5x, 1+2x, 1+4x, 5x, 2+4x,\\
2+3x, 1+2x, 4, 4+3x, 2+3x, 6+2x, 4+6x, 5+4x).
 \end{multline*}
 A ``good" 16-tuple in $\eff_{121}= \zed_{11}[x]/(x^2+1)$ is
\begin{multline*}
(1+x,1+2x, 2+x, 2+2x, 1+3x, 2+x, 3+4x, 4+2x,\\
1+4x, 2+x, 3+2x, 5+10x, 1+x, 7+2x, 4+7x, 10+4x).
 \end{multline*}
 \end{proof}
 
 We remark that alternate constructions (found by backtracking) for $v=196$ and $v = 484$ are given in Examples \ref{19641.exam} 
 and \ref{48441.exam}, resp.

Using the theorem of Weil on multiplicative character sums, along the lines of \cite{BP},
it is possible to prove that a 16-tuple of elements of $\eff_q$ satisfying the
conditions of Lemma \ref{4q,4,4,1} exists whenever $q>9152353$.
Since we have found a good 16-tuple by computer for all primes $q<9152353$, we can state the following.
\begin{Theorem}
There exists a $(\eff_4\times\eff_p,\eff_4\times\{0\},4,1)$-BRDF and a nested $(4p,4,1)$-BIBD for any prime $p\equiv 1 \bmod 12$.
\end{Theorem}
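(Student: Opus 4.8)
The plan is to prove the existence of a ``good'' $16$-tuple (in the sense of Lemma \ref{4q,4,4,1}) for every sufficiently large prime $q \equiv 1 \bmod 12$ by a multiplicative character sum estimate, then to dispose of the remaining primes by the finite computer search already carried out (a sample being recorded in Table \ref{good.tab}), and finally to invoke Lemma \ref{4q,4,4,1}.

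First I would set up the counting. Fix a prime $q = 12n+1$ and let $\Phi$ be the group of multiplicative characters of $\eff_q^*$ of order dividing $6$, so $|\Phi| = 6$. For $\gamma, y \in \eff_q^*$, the quantity $\frac{1}{6}\sum_{\chi \in \Phi} \overline{\chi(\gamma)}\,\chi(y)$ equals $1$ when $y \in \gamma C^6$ and $0$ otherwise. Conditions (i) and (ii) say precisely that each of the four $6$-element difference lists $\Delta_x$ meets each of the six cosets of $C^6$ exactly once, and that each of the four lists $U_x$ meets five (resp.\ three) pairwise distinct cosets other than $C^6$. Summing over all admissible coset assignments (a bijection for each $\Delta_x$, an injection into the non-trivial cosets for each $U_x$) and introducing the character identity for each of the $24$ difference conditions and the $16$ element conditions, I would express the number $M$ of good $16$-tuples as
\[
M = \sum_{\text{assignments}} \ \sum_{(a_1,\dots,a_{16}) \in (\eff_q^*)^{16}} \ \prod_{\text{(conditions)}} \left( \frac{1}{6} \sum_{\chi \in \Phi} \overline{\chi(\gamma)}\,\chi(\,\cdot\,) \right),
\]
where the argument of each inner character is the relevant $a_i - a_j$ or $a_i$ (the requirements that each difference and each $a_i$ is nonzero are handled by the same device or by a lower-order inclusion--exclusion correction).

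Next I would extract the main term and bound the error. Expanding the product over all $6^{40}$ choices of characters, the term in which every character is principal produces the main term $M_0 = c\,q^{16}(1 + o(1))$ with $c$ an explicit positive rational (the number of admissible assignments divided by $6^{40}$, times the leading behaviour of the $\frac{q-1}{6}$-type factors). Every other term, after reorganizing the summation so that each variable $a_i$ is summed completely over $\eff_q$, factors into sixteen one-variable sums of the shape $\sum_{a \in \eff_q} \prod_\ell \chi_\ell(a - c_\ell)$, the $c_\ell$ being constants or variables summed earlier. I would argue that for every non-principal character choice at least one of these sixteen sums is non-degenerate (the characters sharing a common shift do not all multiply to the principal character), working in the subregion where the shifts $c_\ell$ are pairwise distinct and absorbing coincidences among the $a_i$ into an $O(q^{15})$ correction; Weil's theorem on multiplicative character sums then bounds that one sum by $(D-1)\sqrt q$, the other fifteen being at most $q$, so every non-principal term is $O(q^{15.5})$. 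Since there are only $O(1)$ such terms, $M = c\,q^{16} + O(q^{15.5})$, which is positive once $q$ exceeds a threshold; a careful count of the number of character terms and of the relevant values of $D$, along the lines of \cite{BP}, yields the threshold $9152353$. For every prime $q \le 9152353$ with $q \equiv 1 \bmod 12$ a good $16$-tuple has been located by computer, so a good $16$-tuple exists for every prime $q \equiv 1 \bmod 12$, and Lemma \ref{4q,4,4,1} then supplies both the claimed $(\eff_4\times\eff_q,\eff_4\times\{0\},4,1)$-BRDF and the claimed nested $(4q,4,1)$-BIBD.

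The hard part will be the non-degeneracy verification together with the explicit bookkeeping. Because the sixteen variables are shared among several difference lists (for instance $a_9,a_{10},a_{11},a_{12}$ each occur in several of the $\Delta_x$ and in a $U_x$, and the triple $a_{10}-a_{11}$, $a_{10}-a_{12}$, $a_{11}-a_{12}$ is linearly dependent), one must check that no non-principal character assignment forces all sixteen one-variable sums down to the trivial bound $q$, and one must tally the constants tightly enough that the resulting threshold is small enough to be covered exhaustively by computer. This is a lengthy but essentially routine adaptation of the method of \cite{BP} to a larger and more intricate configuration of linear forms, so I would present only a sketch and refer to \cite{BP} for the technique.
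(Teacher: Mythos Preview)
Your proposal is correct and follows essentially the same approach as the paper: a Weil-type multiplicative character sum estimate (along the lines of \cite{BP}) to guarantee a good $16$-tuple for all primes $q>9152353$, combined with a computer search below that threshold, followed by an application of Lemma \ref{4q,4,4,1}. The paper's own argument is in fact even terser than yours, giving only the threshold and the reference to \cite{BP} without spelling out the character-sum setup or the non-degeneracy check.
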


A good 16-tuple can probably be found also for the  prime powers $q<9152353$. However, we have not done this computation.
 
\medskip

For $v = 256$, we note that an affine plane of order $16$ is a $(256,16,1)$-BIBD. It suffices to replace every block with a nested $(16,4,1)$-BIBD (this is a PBD-closure construction).
The case $v=688$ can be handled by a recursive construction that is described in \cite[Corollary 2.2]{St24}
(we also present base blocks for $v=688$ that were found by computer in the Appendix).
The remaining exceptions, i.e.,
\[ v \in \{ 220, 268, 
304, 316, 472, 
496, 508, 544\} ,\] 
are eliminated by using a backtracking algorithm to compute nestings of given sets of base blocks. 
These are presented in the Appendix.

Summarizing, we have our main theorem.

\begin{Theorem}
There exists a nested $(v,4,1)$-BIBD for all $v \equiv 4 \bmod 12$, $ v \geq 16$. 
\end{Theorem}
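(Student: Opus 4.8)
The plan is to combine every construction developed in this section into a single covering argument, organized by the magnitude of $v$. First I would dispose of the small cases $16 \le v \le 196$ by quoting Lemma~\ref{mod96.lem}, and of the large cases $v \ge 1924$ by quoting Theorem~\ref{gen.bound}. This reduces the problem to the finitely many values $v \equiv 4 \bmod 12$ with $196 < v < 1924$.

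For that intermediate range I would apply Corollary~\ref{4.cor} exactly as organized in Table~\ref{intervals.tab}: for each admissible $m$ — either $m \equiv 0 \bmod 8$, so that a TD$(9,m)$ exists by Lemma~\ref{TD9.lem}, or $m \equiv 1 \bmod 4$ with both a TD$(9,m)$ and a nested $(3m+1,4,1)$-BIBD available (the latter from Lemma~\ref{mod96.lem} for the values of $m$ in the table) — one obtains a run of consecutive admissible values $v = 3(8m+t)+1$ with $t \equiv 1 \bmod 4$, spaced $12$ apart, subject to $5 \le t \le m$. I would then verify that the union of these runs covers every $v \equiv 4 \bmod 12$ with $196 < v < 1924$ apart from exactly the $29$ values listed immediately after Table~\ref{intervals.tab}. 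This purely arithmetical bookkeeping — confirming that consecutive intervals overlap so as to leave precisely that exceptional set and nothing more — is the only delicate point, and is the step I expect to require the most care, although it is entirely routine.

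It then remains to realize each of the $29$ exceptional values by distributing them among the remaining tools of the section. The twelve values $376, 388, 544, 556, 568, 580, 880, 892, 1120, 1132, 1144, 1156$ are handled by the corollary of Lemma~\ref{RBIBD.lem} (using the appropriate resolvable $(56m+8,8,1)$-BIBDs together with the small nested BIBDs); the six values $280, 364, 520, 532, 700, 868$ by the product and filling-in constructions of Lemmas~\ref{DF1.lem} and~\ref{DF.lem}, as tabulated in Table~\ref{DF.tab}; the values $292 = 4 \cdot 73$ and $484 = 4 \cdot 121$ by Lemma~\ref{4q,4,4,1} and Corollary~\ref{pp.thm}; the value $256$ by replacing each block of an affine plane of order $16$ with a nested $(16,4,1)$-BIBD (a PBD-closure step); the value $688$ by the recursive construction of \cite[Corollary~2.2]{St24}; and the remaining seven values $220, 268, 304, 316, 472, 496, 508$ by the backtracking computations whose base blocks appear in the Appendix. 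Since every value $v \equiv 4 \bmod 12$ with $v \ge 16$ has thereby been realized, the theorem follows. I would note that no matching necessary condition need be checked here, since $k = 4 > 3 = 2\lambda + 1$ means Lemma~\ref{bound.lem} imposes no constraint, and the congruence $v \equiv 4 \bmod 12$ together with $v \ge 16$ is built into the hypothesis.
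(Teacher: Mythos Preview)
Your proposal is correct and follows essentially the same route as the paper: small cases via Lemma~\ref{mod96.lem}, large cases via Theorem~\ref{gen.bound}, the intermediate range via Table~\ref{intervals.tab}, and the same distribution of the $29$ exceptions among the RBIBD corollary, the difference-matrix product of Lemmas~\ref{DF1.lem}--\ref{DF.lem}, the $4q$ construction (Lemma~\ref{4q,4,4,1} and Corollary~\ref{pp.thm}) for $292$ and $484$, PBD-closure for $256$, \cite[Corollary~2.2]{St24} for $688$, and backtracking for the rest. The only cosmetic difference is that the paper redundantly includes $544$ in its final backtracking list even though it is already covered by the RBIBD corollary, whereas your accounting of seven backtracking values is the tighter one.
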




\section{Summary and conclusion}
\label{summary.sec}

The next case of interest for perfect nestings is $(v,5,2)$-BIBDs. We have found infinite classes of nested $(v,5,2)$-BIBDs and nested
$(5,2)$-GDDs. We will discuss this problem further in a forthcoming paper. 

Many variations of nestings can be considered. When $k < 2 \lambda + 1$, a nesting of a $(v,k,\lambda)$-BIBD does not exist. A relaxed definition of nesting would allow new points to be added to the BIBD, so a  $(v,k,\lambda)$-BIBD would be nested into a partial $(w,k+1,\lambda+1)$-BIBD for some $w > v$. This is the approach that is taken in \cite{AGMMRRT,PBOM} in the case $k = 2$, $\lambda = 1$. 
The natural problem to consider is to construct nestings in which $w$ is minimized.

Another possible generalization would be to study nestings of 
$(v,k,\lambda)$-BIBDs into (partial) $(v,k,\lambda')$-BIBDs, where $\lambda' > \lambda + 1$. We are not aware of any prior work on this problem.

\appendix

\section{Appendix}

\begin{Example}
\label{13641.exam}
%
%
{\rm Base blocks for a nested $(136,4,1)$-BIBD: 
\[
\begin{array}{ccc}
\{134,135,5,27,0\} \quad \quad \quad &
\{130,133,15,81,0\} \quad \quad \quad &
\{123,132,50,112,0\}\\
\{129,20,46,96,0\} \quad \quad \quad &
\{127,72,14,28,0\} \quad \quad \quad &
\{87,92,128,111,0\}\\
\{103,105,115,38,0\} \quad \quad \quad &
\{99,107,19,61,0\} \quad \quad \quad &
\{100,104,69,53,0\}\\
\{74,89,114,17,0\} \quad \quad \quad &
\{58,71,101,26,0\} \quad \quad \quad &
\{0,34,68,102,11\}
\end{array}
\]}
\end{Example}

\begin{Example}
\label{14841.exam}
{\rm Base blocks for a nested $(148,4,1)$-BIBD:
\[
\begin{array}{ccc}
\{147,111,108,6,0\} \quad \quad \quad &
\{146,82,15,105,0\} \quad \quad \quad &
\{145,97,130,68,0\}\\
\{141,143,87,119,0\} \quad \quad \quad &
\{123,139,128,50,0\} \quad \quad \quad &
\{125,137,138,24,0\}\\
\{140,112,85,55,0\} \quad \quad \quad &
\{144,102,94,14,0\} \quad \quad \quad &
\{131,91,100,110,0\}\\
\{92,136,41,67,0\} \quad \quad \quad &
\{39,35,84,122,0\} \quad \quad \quad &
\{33,27,47,99,0\}\\
\{0,37,74,111,16\} \quad \quad \quad &
\end{array}
\]}
\end{Example}

\begin{Example}
\label{16041.exam}
%
%
{\rm Base blocks for a nested $(160,4,1)$-BIBD:
\[
\begin{array}{ccc}
\{1, 2, 6, 12, 0\} \quad \quad \quad &
\{4, 7, 19, 37, 0\} \quad \quad \quad & 
\{5, 14, 50, 104, 0\} \\
\{3, 10, 32, 59, 0 \} \quad \quad \quad & 
\{8, 29, 95, 16, 0  \} \quad \quad \quad & 
\{11, 74, 112, 35, 0  \}\\
\{9, 28, 76, 119, 0  \} \quad \quad \quad  &
\{17, 31, 63, 88, 0  \} \quad \quad \quad &
\{55, 71, 133, 20, 0  \} \\
\{30, 47, 67, 135, 0  \}\quad \quad \quad &
\{13, 36, 121, 77, 0  \}\quad \quad \quad &
\{134, 136, 34, 108, 0\}\\
\{38, 69, 103, 145, 0  \} \quad \quad \quad &
\{0,40,80,120, 18\} \quad \quad \quad &
\end{array}
\]}
\end{Example}

\begin{Example}
\label{17241.exam}
%
%
{\rm Base blocks for a nested $(172,4,1)$-BIBD:
\[
\begin{array}{ccc}
\{171,132,109,62,0\} \quad \quad \quad &
\{170,71,124,137,0\} \quad \quad \quad &
\{131,103,107,128,0\}\\
\{169,150,55,160,0\} \quad \quad \quad &
\{165,130,147,96,0\} \quad \quad \quad &
\{37,81,122,39,0\}\\
\{166,46,134,146,0\} \quad \quad \quad &
\{164,93,82,67,0\} \quad \quad \quad &
\{156,157,33,149,0\}\\
\{84,4,121,10,0\} \quad \quad \quad &
\{45,113,5,83,0\} \quad \quad \quad &
\{94,99,72,153,0\}\\
\{116,161,85,145,0\} \quad \quad \quad &
\{32,68,125,18,0\} \quad \quad \quad &
\{0,43,86,129,9\}
\end{array}
\]}
\end{Example}

\begin{Example}
\label{18441.exam}
%
%
{\rm Base blocks for a nested $(184,4,1)$-BIBD:
\[
\begin{array}{ccc}
\{183,12,99,113,0\} \quad \quad \quad &
\{182,21,148,60,0\} \quad \quad \quad &
\{181,23,103,65,0\}\\
\{178,180,63,128,0\} \quad \quad \quad &
\{179,24,84,142,0\} \quad \quad \quad &
\{175,10,38,120,0\}\\
\{171,7,50,127,0\} \quad \quad \quad &
\{165,166,170,176,0\} \quad \quad \quad &
\{140,143,155,173,0\}\\
\{159,168,20,74,0\} \quad \quad \quad &
\{129,156,80,58,0\} \quad \quad \quad &
\{137,145,162,34,0\}\\
\{82,106,157,141,0\} \quad \quad \quad &
\{95,167,136,88,0\} \quad \quad \quad &
\{115,147,54,94,0\}\\
\{0,46,92,138,15\} \quad \quad \quad &
\end{array}
\]}
\end{Example}

\begin{Example}
\label{19641.exam}
%
%
{\rm Base blocks for a nested $(196,4,1)$-BIBD:
\[
\begin{array}{ccc}
\{195,15,115,135,0\} \quad \quad \quad &
\{194,8,142,19,0\} \quad \quad \quad &
\{192,14,49,118,0\}\\
\{193,11,163,45,0\} \quad \quad \quad &
\{184,191,79,51,0\} \quad \quad \quad &
\{190,17,59,123,0\}\\
\{183,189,70,96,0\} \quad \quad \quad &
\{186,22,76,116,0\} \quad \quad \quad &
\{167,168,172,9,0\}\\
\{156,159,171,74,0\} \quad \quad \quad &
\{119,128,164,69,0\} \quad \quad \quad &
\{114,141,53,160,0\}\\
\{165,50,178,107,0\} \quad \quad \quad &
\{111,158,150,133,0\} \quad \quad \quad &
\{35,176,152,101,0\}\\
\{138,169,97,140,0\} \quad \quad \quad &
\{0,49,98,147,23\} \quad \quad \quad &
\end{array}
\]}
\end{Example}

\medskip

\begin{Example}
\label{22041.exam}
%
%
{\rm Base blocks for a nested $(220,4,1)$-BIBD:
\[
\begin{array}{ccc}
\{219,13,101,78,0\} \quad \quad \quad &
\{217,17,77,145,0\} \quad \quad \quad &
\{218,6,50,121,0\}\\
\{216,18,44,139,0\} \quad \quad \quad &
\{215,11,89,175,0\} \quad \quad \quad &
\{211,16,177,57,0\}\\
\{213,26,68,137,0\} \quad \quad \quad &
\{201,212,30,54,0\} \quad \quad \quad &
\{192,193,197,210,0\}\\
\{181,184,196,15,0\} \quad \quad \quad &
\{155,164,200,97,0\} \quad \quad \quad &
\{140,167,55,186,0\}\\
\{191,52,156,109,0\} \quad \quad \quad &
\{182,189,12,76,0\} \quad \quad \quad &
\{157,178,87,59,0\}\\
\{125,188,135,51,0\} \quad \quad \quad &
\{129,98,159,127,0\} \quad \quad \quad &
\{72,199,162,66,0\}\\
\{0,55,110,165,14\} \quad \quad \quad &
\end{array}
\]}
\end{Example}

\begin{Example}
\label{26841.exam}
%
%
{\rm Base blocks for a nested $(268,4,1)$-BIBD:
\[
\begin{array}{ccc}
\{267,2,35,187,0\} \quad \quad \quad &
\{265,5,95,195,0\} \quad \quad \quad &
\{264,8,28,143,0\}\\
\{262,15,76,153,0\} \quad \quad \quad &
\{261,23,39,48,0\} \quad \quad \quad &
\{259,29,63,190,0\}\\
\{257,34,139,162,0\} \quad \quad \quad &
\{252,24,26,128,0\} \quad \quad \quad &
\{251,255,22,202,0\}\\
\{248,258,79,126,0\} \quad \quad \quad &
\{238,256,38,146,0\} \quad \quad \quad &
\{228,250,111,163,0\}\\
\{216,247,90,104,0\} \quad \quad \quad &
\{254,49,50,123,0\} \quad \quad \quad &
\{183,231,237,156,0\}\\
\{236,241,64,165,0\} \quad \quad \quad &
\{224,235,53,207,0\} \quad \quad \quad &
\{208,227,19,43,0\}\\
\{94,120,135,179,0\} \quad \quad \quad &
\{132,169,198,75,0\} \quad \quad \quad &
\{166,209,222,47,0\}\\
\{155,206,213,93,0\} \quad \quad \quad &
\{0,67,134,201,25\} \quad \quad \quad &
\end{array}
\]}
\end{Example}

\begin{Example}
\label{29241.exam}
%
%
{\rm Base blocks for a nested $(292,4,1)$-BIBD:
\[
\begin{array}{ccc}
\{291,174,127,36,0\} \quad \quad \quad &
\{290,93,21,34,0\} \quad \quad \quad &
\{289,163,101,32,0\}\\
\{284,133,14,288,0\} \quad \quad \quad &
\{282,182,70,287,0\} \quad \quad \quad &
\{283,77,45,286,0\}\\
\{280,281,137,178,0\} \quad \quad \quad &
\{272,279,103,239,0\} \quad \quad \quad &
\{259,268,270,276,0\}\\
\{275,285,145,209,0\} \quad \quad \quad &
\{235,249,273,31,0\} \quad \quad \quad &
\{262,277,62,219,0\}\\
\{255,274,96,166,0\} \quad \quad \quad &
\{245,265,82,216,0\} \quad \quad \quad &
\{243,264,35,142,0\}\\
\{212,237,267,185,0\} \quad \quad \quad &
\{227,253,269,74,0\} \quad \quad \quad &
\{238,266,42,148,0\}\\
\{213,244,46,157,0\} \quad \quad \quad &
\{115,149,242,71,0\} \quad \quad \quad &
\{141,180,226,81,0\}\\
\{131,176,229,241,0\} \quad \quad \quad &
\{68,125,192,240,0\} \quad \quad \quad &
\{122,183,254,41,0\}\\
\{0,73,146,219,29\} \quad \quad \quad &
\end{array}
\]}
\end{Example}

\begin{Example}
\label{30441.exam}
%
%
{\rm Base blocks for a nested $(304,4,1)$-BIBD:
\[
\begin{array}{ccc}
\{52,235,185,92,0\} \quad \quad \quad &
\{244,39,269,283,0\} \quad \quad \quad &
\{126,300,234,164,0\}\\
\{25,173,49,29,0\} \quad \quad \quad &
\{227,123,6,232,0\} \quad \quad \quad &
\{205,294,260,208,0\}\\
\{270,272,175,211,0\} \quad \quad \quad &
\{273,284,26,217,0\} \quad \quad \quad &
\{62,78,111,203,0\}\\
\{165,191,303,42,0\} \quad \quad \quad &
\{8,55,109,197,0\} \quad \quad \quad &
\{56,118,135,166,0\}\\
\{3,121,149,150,0\} \quad \quad \quad &
\{7,15,88,292,0\} \quad \quad \quad &
\{293,302,33,40,0\}\\
\{147,159,45,75,0\} \quad \quad \quad &
\{38,53,59,237,0\} \quad \quad \quad &
\{239,261,89,176,0\}\\
\{199,222,267,103,0\} \quad \quad \quad &
\{110,142,277,287,0\} \quad \quad \quad &
\{130,167,258,36,0\}\\
\{172,225,243,19,0\} \quad \quad \quad &
\{98,156,220,295,0\} \quad \quad \quad &
\{47,112,153,282,0\}\\
\{30,133,146,223,0\} \quad \quad \quad &
\{0,76,152,228,180\} \quad \quad \quad &
\end{array}
\]}
\end{Example}

\begin{Example}
\label{31641.exam}
%
%
{\rm Base blocks for a nested $(316,4,1)$-BIBD:
\[
\begin{array}{ccc}
\{53,54,247,88,0\} \quad \quad \quad &
\{216,81,254,231,0\} \quad \quad \quad &
\{205,102,131,76,0\}\\
\{132,213,230,307,0\} \quad \quad \quad &
\{301,110,27,60,0\} \quad \quad \quad &
\{150,23,168,137,0\}\\
\{283,128,221,218,0\} \quad \quad \quad &
\{140,209,294,67,0\} \quad \quad \quad &
\{25,190,91,84,0\}\\
\{278,207,160,73,0\} \quad \quad \quad &
\{154,259,284,21,0\} \quad \quad \quad &
\{178,223,8,121,0\}\\
\{90,99,296,129,0\} \quad \quad \quad &
\{115,164,113,222,0\} \quad \quad \quad &
\{314,19,268,89,0\}\\
\{275,56,136,80,0\} \quad \quad \quad &
\{120,77,237,125,0\} \quad \quad \quad &
\{24,108,144,13,0\}\\
\{74,250,135,304,0\} \quad \quad \quad &
\{124,64,105,42,0\} \quad \quad \quad &
\{55,255,92,65,0\}\\
\{14,18,106,234,0\} \quad \quad \quad &
\{170,182,130,198,0\} \quad \quad \quad &
\{47,299,155,3,0\}\\
\{167,31,199,271,0\} \quad \quad \quad &
\{163,171,39,177,0\} \quad \quad \quad &
\{0,237,158,79,44\}
\end{array}
\]}
\end{Example}

\begin{Example}
\label{47241.exam}
%
%
{\rm Base blocks for a nested $(472,4,1)$-BIBD:
\[
\begin{array}{ccc}
\{470,471,56,71,0\} \quad \quad \quad &
\{469,8,64,368,0\} \quad \quad \quad &
\{468,15,100,217,0\}\\
\{467,25,43,70,0\} \quad \quad \quad &
\{463,465,237,437,0\} \quad \quad \quad &
\{466,10,90,293,0\}\\
\{440,461,119,172,0\} \quad \quad \quad &
\{460,21,227,369,0\} \quad \quad \quad &
\{459,31,86,181,0\}\\
\{449,458,122,174,0\} \quad \quad \quad &
\{438,455,89,109,0\} \quad \quad \quad &
\{432,456,175,391,0\}\\
\{420,454,113,272,0\} \quad \quad \quad &
\{413,453,120,186,0\} \quad \quad \quad &
\{450,24,75,233,0\}\\
\{446,28,126,136,0\} \quad \quad \quad &
\{443,50,128,302,0\} \quad \quad \quad &
\{452,66,221,303,0\}\\
\{445,78,168,279,0\} \quad \quad \quad &
\{442,84,155,322,0\} \quad \quad \quad &
\{430,83,118,295,0\}\\
\{378,191,114,439,0\} \quad \quad \quad &
\{374,55,411,433,0\} \quad \quad \quad &
\{375,176,73,435,0\}\\
\{400,157,436,404,0\} \quad \quad \quad &
\{421,257,76,426,0\} \quad \quad \quad &
\{277,414,364,280,0\}\\
\{339,153,77,403,0\} \quad \quad \quad &
\{366,48,405,428,0\} \quad \quad \quad &
\{362,164,62,425,0\}\\
\{385,143,423,392,0\} \quad \quad \quad &
\{305,142,434,313,0\} \quad \quad \quad &
\{260,398,349,266,0\}\\
\{357,173,99,427,0\} \quad \quad \quad &
\{275,431,318,343,0\} \quad \quad \quad &
\{111,387,287,180,0\}\\
\{367,127,409,380,0\} \quad \quad \quad &
\{218,57,351,232,0\} \quad \quad \quad &
\{278,418,371,290,0\}\\
\{0,118,236,354,53\} \quad \quad \quad &
\end{array}
\]}
\end{Example}

\begin{Example}
\label{48441.exam}
%
%
{\rm Base blocks for a nested $(484,4,1)$-BIBD:
\[
\begin{array}{ccc}
\{140,431,352,201,0\} \quad \quad \quad &
\{389,62,426,448,0\} \quad \quad \quad &
\{386,179,74,446,0\}\\
\{304,55,340,308,0\} \quad \quad \quad &
\{475,309,124,480,0\} \quad \quad \quad &
\{223,366,314,226,0\}\\
\{93,385,307,157,0\} \quad \quad \quad &
\{159,317,198,221,0\} \quad \quad \quad &
\{145,423,319,208,0\}\\
\{381,133,419,388,0\} \quad \quad \quad &
\{375,210,26,383,0\} \quad \quad \quad &
\{63,207,156,69,0\}\\
\{2,296,220,72,0\} \quad \quad \quad &
\{173,333,216,241,0\} \quad \quad \quad &
\{372,168,66,441,0\}\\
\{398,152,440,411,0\} \quad \quad \quad &
\{8,329,147,22,0\} \quad \quad \quad &
\{467,129,80,479,0\}\\
\{449,450,460,393,0\} \quad \quad \quad &
\{466,468,114,451,0\} \quad \quad \quad &
\{182,191,215,292,0\}\\
\{287,303,361,387,0\} \quad \quad \quad &
\{394,105,126,153,0\} \quad \quad \quad &
\{483,29,178,212,0\}\\
\{200,240,356,14,0\} \quad \quad \quad &
\{163,300,371,48,0\} \quad \quad \quad &
\{442,41,222,336,0\}\\
\{81,146,291,471,0\} \quad \quad \quad &
\{433,52,102,228,0\} \quad \quad \quad &
\{125,205,294,106,0\}\\
\{324,377,405,154,0\} \quad \quad \quad &
\{236,326,367,439,0\} \quad \quad \quad &
\{143,238,345,400,0\}\\
\{282,348,368,85,0\} \quad \quad \quad &
\{54,172,218,465,0\} \quad \quad \quad &
\{342,464,25,219,0\}\\
\{257,301,425,82,0\} \quad \quad \quad &
\{47,199,234,281,0\} \quad \quad \quad &
\{335,6,60,135,0\}\\
\{211,229,365,473,0\} \quad \quad \quad &
\{0,121,242,363,131\} \quad \quad \quad &
\end{array}
\]}
\end{Example}

\begin{Example}
\label{49641.exam}
%
%
{\rm Base blocks for a nested $(496,4,1)$-BIBD:
\[
\begin{array}{ccc}
\{85,385,304,147,0\} \quad \quad \quad &
\{290,450,328,350,0\} \quad \quad \quad &
\{364,153,45,425,0\}\\
\{100,341,137,104,0\} \quad \quad \quad &
\{386,215,24,391,0\} \quad \quad \quad &
\{470,118,67,473,0\}\\
\{44,345,265,109,0\} \quad \quad \quad &
\{13,174,53,76,0\} \quad \quad \quad &
\{108,394,287,172,0\}\\
\{157,399,196,164,0\} \quad \quad \quad &
\{3,329,139,11,0\} \quad \quad \quad &
\{84,229,179,90,0\}\\
\{457,264,186,32,0\} \quad \quad \quad &
\{245,408,289,314,0\} \quad \quad \quad &
\{356,148,43,426,0\}\\
\{187,431,230,200,0\} \quad \quad \quad &
\{128,456,268,142,0\} \quad \quad \quad &
\{427,78,30,439,0\}\\
\{131,133,401,47,0\} \quad \quad \quad &
\{492,129,144,145,0\} \quad \quad \quad &
\{448,458,18,124,0\}\\
\{395,413,444,468,0\} \quad \quad \quad &
\{379,398,475,33,0\} \quad \quad \quad &
\{237,257,446,119,0\}\\
\{93,114,190,437,0\} \quad \quad \quad &
\{462,1,152,181,0\} \quad \quad \quad &
\{115,160,313,360,0\}\\
\{180,252,459,55,0\} \quad \quad \quad &
\{22,74,165,288,0\} \quad \quad \quad &
\{445,454,143,211,0\}\\
\{476,8,54,112,0\} \quad \quad \quad &
\{312,346,113,154,0\} \quad \quad \quad &
\{60,96,197,280,0\}\\
\{240,282,66,123,0\} \quad \quad \quad &
\{486,75,92,321,0\} \quad \quad \quad &
\{246,305,358,469,0\}\\
\{233,308,489,103,0\} \quad \quad \quad &
\{218,297,424,130,0\} \quad \quad \quad &
\{169,272,283,438,0\}\\
\{81,201,227,14,0\} \quad \quad \quad &
\{241,416,2,29,0\} \quad \quad \quad &
\{0,124,248,372,149\}\\
\end{array}
\]}
\end{Example}

\begin{Example}
\label{50841.exam}
%
%
{\rm Base blocks for a nested $(508,4,1)$-BIBD:
\[
\begin{array}{ccc}
\{86,390,307,147,0\} \quad \quad \quad &
\{471,127,1,23,0\} \quad \quad \quad &
\{506,287,177,57,0\}\\
\{77,323,114,81,0\} \quad \quad \quad &
\{276,101,413,281,0\} \quad \quad \quad &
\{469,110,54,472,0\}\\
\{224,21,447,288,0\} \quad \quad \quad &
\{53,218,93,116,0\} \quad \quad \quad &
\{308,90,489,370,0\}\\
\{195,442,234,202,0\} \quad \quad \quad &
\{33,367,172,41,0\} \quad \quad \quad &
\{385,27,480,391,0\}\\
\{55,362,282,125,0\} \quad \quad \quad &
\{145,312,189,214,0\} \quad \quad \quad &
\{5,297,190,73,0\}\\
\{443,184,486,456,0\} \quad \quad \quad &
\{15,351,158,29,0\} \quad \quad \quad &
\{420,64,11,432,0\}\\
\{6,314,235,79,0\} \quad \quad \quad &
\{206,374,252,278,0\} \quad \quad \quad &
\{495,280,174,58,0\}\\
\{309,51,354,325,0\} \quad \quad \quad &
\{251,80,396,268,0\} \quad \quad \quad &
\{311,464,412,326,0\}\\
\{104,105,357,69,0\} \quad \quad \quad &
\{159,161,264,478,0\} \quad \quad \quad &
\{40,50,115,375,0\}\\
\{187,198,279,328,0\} \quad \quad \quad &
\{38,47,135,329,0\} \quad \quad \quad &
\{94,113,338,389,0\}\\
\{341,388,488,143,0\} \quad \quad \quad &
\{286,352,437,168,0\} \quad \quad \quad &
\{462,68,102,301,0\}\\
\{500,12,289,366,0\} \quad \quad \quad &
\{463,3,31,193,0\} \quad \quad \quad &
\{164,248,275,106,0\}\\
\{501,129,203,380,0\} \quad \quad \quad &
\{231,249,343,364,0\} \quad \quad \quad &
\{292,333,400,242,0\}\\
\{241,295,419,217,0\} \quad \quad \quad &
\{304,417,459,247,0\} \quad \quad \quad &
\{109,285,316,482,0\}\\
\{0,127,254,381,212\} \quad \quad \quad &
\end{array}
\]}
\end{Example}

\begin{Example}
\label{54441.exam}
%
%
{\rm Base blocks for a nested $(544,4,1)$-BIBD:
\[
\begin{array}{ccc}
\{93,421,332,162,0\} \quad \quad \quad &
\{381,13,423,448,0\} \quad \quad \quad &
\{209,523,404,277,0\}\\
\{504,224,1,508,0\} \quad \quad \quad &
\{248,59,394,253,0\} \quad \quad \quad &
\{151,309,251,154,0\}\\
\{370,155,67,442,0\} \quad \quad \quad &
\{66,243,110,136,0\} \quad \quad \quad &
\{372,143,25,443,0\}\\
\{464,185,507,471,0\} \quad \quad \quad &
\{49,405,197,57,0\} \quad \quad \quad &
\{304,463,406,310,0\}\\
\{34,365,279,112,0\} \quad \quad \quad &
\{247,426,295,323,0\} \quad \quad \quad &
\{213,530,414,290,0\}\\
\{516,239,19,529,0\} \quad \quad \quad &
\{374,188,526,388,0\} \quad \quad \quad &
\{269,430,375,281,0\}\\
\{77,409,324,158,0\} \quad \quad \quad &
\{461,97,511,540,0\} \quad \quad \quad &
\{542,316,201,78,0\}\\
\{250,518,299,266,0\} \quad \quad \quad &
\{214,29,368,231,0\} \quad \quad \quad &
\{539,157,103,10,0\}\\
\{146,147,339,128,0\} \quad \quad \quad &
\{186,195,259,261,0\} \quad \quad \quad &
\{458,468,145,180,0\}\\
\{532,9,216,403,0\} \quad \quad \quad &
\{62,84,357,489,0\} \quad \quad \quad &
\{271,298,385,260,0\}\\
\{257,288,341,505,0\} \quad \quad \quad &
\{315,355,506,54,0\} \quad \quad \quad &
\{60,120,202,225,0\}\\
\{217,307,450,89,0\} \quad \quad \quad &
\{377,488,533,88,0\} \quad \quad \quad &
\{469,501,199,219,0\}\\
\{241,292,473,119,0\} \quad \quad \quad &
\{61,122,152,492,0\} \quad \quad \quad &
\{465,16,184,230,0\}\\
\{51,171,233,396,0\} \quad \quad \quad &
\{24,63,87,208,0\} \quad \quad \quad &
\{8,64,168,242,0\}\\
\{99,164,274,472,0\} \quad \quad \quad &
\{317,415,527,258,0\} \quad \quad \quad &
\{437,65,117,200,0\}\\
\{0,136,272,408,318\} \quad \quad \quad &
\end{array}
\]}
\end{Example}

\begin{Example}
\label{68841.exam}
%
%
{\rm Base blocks for a nested $(688,4,1)$-BIBD:
\[
\begin{array}{ccc}
\{39,255,340,614,0\} \quad \quad \quad &
\{200,339,370,422,0\} \quad \quad \quad &
\{91,242,403,487,0\}\\
\{37,84,88,442,0\} \quad \quad \quad &
\{103,368,547,552,0\} \quad \quad \quad &
\{428,502,625,628,0\}\\
\{142,230,503,615,0\} \quad \quad \quad &
\{301,438,470,524,0\} \quad \quad \quad &
\{257,407,567,654,0\}\\
\{123,169,176,529,0\} \quad \quad \quad &
\{108,116,354,618,0\} \quad \quad \quad &
\{343,416,538,544,0\}\\
\{16,127,341,432,0\} \quad \quad \quad &
\{20,76,540,675,0\} \quad \quad \quad &
\{112,202,492,641,0\}\\
\{65,110,120,472,0\} \quad \quad \quad &
\{36,213,224,461,0\} \quad \quad \quad &
\{189,261,382,391,0\}\\
\{25,134,346,443,0\} \quad \quad \quad &
\{451,582,617,677,0\} \quad \quad \quad &
\{253,400,557,653,0\}\\
\{24,40,390,669,0\} \quad \quad \quad &
\{96,357,532,549,0\} \quad \quad \quad &
\{332,402,521,536,0\}\\
\{7,276,384,595,0\} \quad \quad \quad &
\{198,327,363,425,0\} \quad \quad \quad &
\{41,328,474,630,0\}\\
\{178,220,239,588,0\} \quad \quad \quad &
\{99,273,293,527,0\} \quad \quad \quad &
\{308,377,495,513,0\}\\
\{92,195,463,570,0\} \quad \quad \quad &
\{254,381,418,482,0\} \quad \quad \quad &
\{62,217,319,605,0\}\\
\{18,59,81,429,0\} \quad \quad \quad &
\{210,469,642,665,0\} \quad \quad \quad &
\{105,126,608,676,0\}\\
\{2,3,27,448,0\} \quad \quad \quad &
\{564,566,646,658,0\} \quad \quad \quad &
\{221,265,545,558,0\}\\
\{51,78,241,477,0\} \quad \quad \quad &
\{111,456,484,632,0\} \quad \quad \quad &
\{248,358,392,406,0\}\\
\{191,399,466,505,0\} \quad \quad \quad &
\{162,212,543,659,0\} \quad \quad \quad &
\{43,119,598,627,0\}\\
\{223,404,518,680,0\} \quad \quad \quad &
\{192,317,374,515,0\} \quad \quad \quad &
\{107,133,439,479,0\}\\
\{64,129,283,523,0\} \quad \quad \quad &
\{264,302,379,522,0\} \quad \quad \quad &
\{53,350,408,621,0\}\\
\{229,436,514,666,0\} \quad \quad \quad &
\{149,179,228,684,0\} \quad \quad \quad &
\{158,251,383,454,0\}\\
\{95,235,316,533,0\} \quad \quad \quad &
\{57,199,323,398,0\} \quad \quad \quad &
\{146,445,550,616,0\}\\
\{0,172,334,516,419\} \quad \quad \quad &
\end{array}
\]}
\end{Example}


\end{document}